\documentclass[reqno,11pt]{amsart}

\textheight22cm \topmargin-0.3cm \oddsidemargin0mm
\evensidemargin0mm \textwidth16.7cm \headsep0.8cm \headheight0.4cm
\marginparwidth = -20pt
\numberwithin{equation}{section}


\usepackage[utf8]{inputenc}
\usepackage[english]{babel}

\usepackage{amsmath}
\usepackage{amsfonts,amssymb}
\usepackage{booktabs}
\usepackage{esint} 
\usepackage{amsthm}

\usepackage{nicefrac}

\usepackage{accents}
\usepackage{epsfig}
\usepackage{graphicx}
\usepackage{csquotes}

\usepackage{psfrag}
\usepackage{graphicx}
\usepackage{graphpap,latexsym,epsf}
\usepackage{color}
\usepackage{amssymb,mathrsfs,enumerate}
\usepackage[colorlinks, citecolor=citegreen, linkcolor=refred]
{hyperref}

\usepackage{caption}
\usepackage{subcaption}
\usepackage{float}

\usepackage{enumitem}

\usepackage[square,sort,comma,numbers]{natbib}

\makeatletter
\def\@tocline#1#2#3#4#5#6#7{\relax
  \ifnum #1>\c@tocdepth 
  \else
    \par \addpenalty\@secpenalty\addvspace{#2}%
    \begingroup \hyphenpenalty\@M
    \@ifempty{#4}{%
      \@tempdima\csname r@tocindent\number#1\endcsname\relax
    }{%
      \@tempdima#4\relax
    }%
    \parindent\z@ \leftskip#3\relax \advance\leftskip\@tempdima\relax
    \rightskip\@pnumwidth plus4em \parfillskip-\@pnumwidth
    #5\leavevmode\hskip-\@tempdima
      \ifcase #1
       \or\or \hskip 1em \or \hskip 2em \else \hskip 3em \fi%
      #6\nobreak\relax
    \dotfill\hbox to\@pnumwidth{\@tocpagenum{#7}}\par
    \nobreak
    \endgroup
  \fi}
\makeatother
\makeatletter
\newcommand{\xdashrightarrow}[2][]{\ext@arrow 0359\rightarrowfill@@{#1}{#2}}
\def\rightarrowfill@@{\arrowfill@@\relax\relbar\rightarrow}
\def\arrowfill@@#1#2#3#4{%
  $\m@th\thickmuskip0mu\medmuskip\thickmuskip\thinmuskip\thickmuskip
   \relax#4#1
   \xleaders\hbox{$#4#2$}\hfill
   #3$%
}
\makeatother
\usepackage{hyperref}

\newcommand{\R}{\mathbb{R}}
\newcommand{\RRR}{\mathbb{R}^{3{\times}3}}
\newcommand{\N}{\mathbb{N}}

\newcommand{\ep}{\varepsilon}

\newcommand{\SO}{{\rm SO}}
\newcommand{\Sym}{{\rm Sym}}

\newcommand{\tr}{{\rm tr}}
\newcommand{\sym}{{\rm sym}\,}

\newcommand{\rmd}{{\rm d}}
\newcommand{\LL}{{\rm L}}
\newcommand{\WW}{{\rm W}}
\newcommand{\Wiso}{\WW^{2,2}_{{\rm iso}}}
\newcommand{\trsp}{\scriptscriptstyle{\mathsf{T}}}
\mathchardef\emptyset="001F

\definecolor{vgreen}{rgb}{0.1,0.5,0.2}
\definecolor{viola}{RGB}{85,26,139}
\definecolor{citegreen}{rgb}{0,0.6,0}
\definecolor{refred}{rgb}{0.8,0,0}




\newtheorem{thm}{Theorem}[section]

\newtheorem{lemma}[thm]{Lemma}

\newtheorem{defin}[thm]{Definition}

\newenvironment{remark}
  {\pushQED{\qed}\remarkr}
  {\popQED\endremarkr}

\newenvironment{example}
  {\pushQED{\qed}\examplex}
  {\popQED\endexamplex}

\makeindex{}


\DeclareMathOperator*{\esssup}{ess\,sup}

\makeatletter
\renewcommand*\env@matrix[1][*\c@MaxMatrixCols c]{%
  \hskip -\arraycolsep
  \let\@ifnextchar\new@ifnextchar
  \array{#1}}
\makeatother

\newcommand*{\temp}{\multicolumn{1}{r|}{}}
\newcommand{\ubar}[1]{\underaccent{\bar}{#1}}

%
%

\begin{document}

\title[Heterogeneous elastic plates with in-plane modulation of the target curvature]
{Heterogeneous elastic plates with in-plane modulation of the target curvature and applications to thin gel sheets}

\author[V.~Agostiniani]{Virginia Agostiniani}
\address{SISSA, via Bonomea 265, 34136 Trieste - Italy} 
\email{vagostin@sissa.it}
\author[A.~Lucantonio]{Alessandro Lucantonio}
\address{SISSA, via Bonomea 265, 34136 Trieste - Italy}
\email{alucanto@sissa.it}
\author[D.~Lu\v ci\' c]{Danka Lu\v ci\' c}
\address{SISSA, via Bonomea 265, 34136 Trieste - Italy}
\email{dlucic@sissa.it}

\begin{abstract} 
We rigorously derive a Kirchhoff plate theory, 
\textit{via} $\Gamma$-convergence, from a three-di\-men\-sio\-nal model that describes the finite elasticity of 
	an elastically heterogeneous, thin sheet. 
The heterogeneity in the elastic properties of the material 
results in a spontaneous strain that depends on both the thickness 
	and the plane variables $x'$.
At the same time,
the spontaneous strain is 
$h$-close to the identity, where $h$ is the small parameter quantifying the thickness. The 2D 
Kirchhoff 
limiting model
is constrained to the set of isometric immersions
of the mid-plane of the plate into $\R^3$, 
with  
a corresponding energy
that penalizes
deviations of the curvature
tensor associated with a deformation
from a $x'$-dependent 
target curvature tensor.
A discussion on the 2D minimizers 
is provided in the case where the target curvature tensor is piecewise constant. Finally, we apply 
the derived
plate theory to the modeling of swelling-induced shape changes in heterogeneous 
thin gel sheets.
\end{abstract}

\makeatletter

\date{\today}

\keywords{Dimension reduction, $\Gamma$-convergence,
Kirchhoff plate theory, 
incompatible tensor fields,
polymer gels,
geo\-me\-try of energy minimizers} 
\subjclass[2010]{49J45, 74B20 , 74K20 , 74F10}

\maketitle



\tableofcontents

\section[Introduction]{Introduction}
Plants \cite{Armon,Dawson} and other natural systems \cite{Arroyo1,Arroyo2} are able to perform complex shape changes that produce curved configurations, often starting from flat initial states. These shape changes usually involve thin structures, such as membranes, plates or shells, and exploit some internal  activation or the responsiveness of the material to non-mechanical external triggers, such as changes in humidity. By mimicking natural behaviors and architectures, synthetic, polymer-based thin sheets have been fabricated that can spontaneously deform in response to non-mechanical stimuli. In particular, in these systems curvature arises from heterogeneous in-plane \cite{deHaan2012,Kle_Efr_Sha_2007,Kim,Wu2013,Pezzulla2015} or through-the-thickness strains \cite{ADS,ADS17,Pezzulla2016,hybrid,Sawa2011,Stoychev2012}, which are induced by heterogeneous material properties, including variable anisotropy. Thus, to study and control the emerging shapes, 
the derivation of plate theories for materials with heterogeneous response to external stimuli has become as a topic of interest in both the mathematical and the physical literature \cite{BBN,Aha_Sha_Kup,BLS,sharon_mechanics_2010,Lucantonio2016,Mostajeran}. 

\smallskip

In this framework,
we wish to
contribute by drawing attention to
some new plate theories and corresponding mathematical problems in dimension reduction inspired by shape morphing applications involving polymer gels. Specifically, in these applications one wants to program the material properties of
a thin gel sheet 
$\Omega_h=\omega\times(-h/2,h/2)$,
where $\omega\subseteq\mathbb{R}^2$ is the mid-plane
and $0<h\ll1$ 
	is the thickness, in order
to endow it with 
	a controlled curvature that is
realizable, 
upon swelling,
at the minimum 
energy cost.
Practically, curvature of the sheet
can be 
obtained
by imprinting
a heterogeneous density $N$ of polymer chains, 
	which corresponds to a heterogeneous shear modulus of the polymer network.
For concreteness, we consider the case
where $N$ is a 
(small) perturbation of order $h$ of the average value $\overline N$, that is,

\begin{equation}
\label{heterog_density}
N
\,=\,
\overline N^h(z)
\,=\,
\overline N
+
hg\left(z',\frac{z_3}h\right),
\qquad\qquad z=(z',z_3)\in\Omega_h,
\end{equation}
for some bounded function $g:\omega\times(-1/2,1/2)\to\R$.
Referring to the classical Flory-Rehner model 
	 \cite{Doi09} 
for isotropic polymer gels, 
we obtain 
	as a consequence of the above assumption on $N$
that the free energy density 
$\overline W^h$
associated with the system is minimized at
\begin{equation}
\label{freeen_minim}
\big(\alpha+hb(z',z_3/h)\big)\SO(3),
\qquad b=\Theta g,
\end{equation}
where the (dimensionless) constants $\alpha$ and 
$\Theta$ 
	are functions of
the material parameters appearing in the 
expression of $\overline W^h$, 
including $\overline{N}$.
We refer the reader to \eqref{flory_rehner}
for the explicit expression of $\overline W^h$
and to the whole Section \ref{gel_sheets}
for more details on this
3D model.
We recall that $\alpha$ corresponds to 
the free-swelling stretch of a homogeneous gel
(i.e. $g=0$ in \eqref{heterog_density})
with respect to its dry state, and is hence greater than one.

Intuitively, the connection between the density of polymer chains in \eqref{heterog_density} and the energy minimizers in \eqref{freeen_minim} offers a way to program minimum-energy strain fields that, as we will see in the following, induce a \textit{target} curvature for the system.
This mechanism of generation of curvature through heterogeneous elastic properties is poorly explored in the mathematical literature of active or ``pre-strained'' materials, and thus constitutes a novel ingredient of our theory. Tipically, these materials are modeled by 3D energy densities of the form (see, for example, \cite{Sch072})
\begin{equation}
\label{prestretched_form}
\overline W^h(z,F)
\,=\,
W\big(FV^h(z',z_3/h)\big),
\end{equation}
for a certain (frame indifferent) homogeneous energy density $W$ minimized at $\SO(3)$, where the ``pre-stretch'' $V^h$ is generally a smooth, 
invertible tensor field 
that represents
(the inverse of) an active stretch, growth, plasticity or other inelastic phenomena. 
In these models, $V^h$ plays the role of a parameter that is externally controlled, without any dependence on the elastic properties of the system (or on other parameters of the energy). Instead, in models based on the Flory-Rehner energy and on the relations \eqref{heterog_density}-\eqref{freeen_minim}, there is an intimate connection between material parameters and minimum energy deformations.

Another interesting feature of the Flory-Rehner energy and, correspondingly, of the family of energy densities we consider, is that they are \textit{not} representable in the pre-stretch form \eqref{prestretched_form}, with $V^h=\big(1+hb/\alpha\big)^{-1}\mathbb I_3$ 
and with energy density $W$ minimized at $\alpha\SO(3)$ (see Remark \ref{gel_motivation}). This feature depends on the different structure of such an energy with respect to the models based on the representation \eqref{prestretched_form}, which originates from physical considerations. 
More precisely,
in the latter models $\overline W^h$ has the physical meaning of purely elastic energy,
while in models for polymer gels $\overline W^h$ is the sum of two energy contributions (elastic and mixing energies) that concurrently define the energy minimum, but none of them is separately minimized at $\SO(3)(V^h)^{-1}$. 
However, the corresponding rescaled
densities converge
uniformly to some homogeneous density $W$.

\smallskip

Motivated by 
the above observations
and discarding 
	for the moment
the 
scalar parameter $\alpha$, which can be accounted for
by a simple change of variable, 
we thus consider the slightly more general setting of
a material characterized
by a \emph{spontaneous stretch
distribution} $\overline U^h$ of the form
\begin{equation}
\label{spont_stretch}
\overline U^h(z)
\,=\,
\mathbb I_3+
h\,B\left(z',\frac{z_3}h\right),
\end{equation}
where $B:\omega\times(-1/2,1/2)\to\Sym(3)$
is a given (bounded) strain distribution.
  
The term 
``spontaneous''
for the distribution
$\overline U^h$ 
	refers to the tendency of
the system to deform, at each point $z$, 
according to a deformation whose gradient
coincides with $\overline U^h(z)$, 
in order to attain the energy minimum pointwise. However, generally
there is no (orientation-preserving) deformation
defined globally in $\Omega_h$,
whose gradient coincides with $\overline U^h$
in the whole of $\Omega_h$.
Equivalently, in the words of Mechanics,
$\overline U^h$ is not kinematically compatible,
or, in the words of Riemannian geometry,
the Riemann curvature tensor associated with $\overline U^h$
	does not vanish
identically throughout $\Omega_h$.

\smallskip

It is now appropriate to notice that the 
3D setting just described can be seen
as a generalization of the setting considered in
\cite{Sch072} (see also \cite{Sch071}),
where the pre-stretch is of the same form
as in \eqref{spont_stretch}, 
except that the $z'$-dependence is not
considered. At the same time, the relevant case where
the pre-stretch in \eqref{prestretched_form} is 
	only
$z'$-dependent
has been addressed in \cite{LP11} and \cite{BLS} 
and has given rise to the fortunate route of
the mathematical treatment of the 
``non-Euclidean plate theories''
(see also \cite{Kupf14}),
introduced from a physical and mechanical 
view point by the pioneering work of Sharon
and coauthors in \cite{ESK} and \cite{Kle_Efr_Sha_2007}. All in all, our 
	theory stands
between   those of
	\cite{Sch072}, on one hand, and of 
	\cite{LP11} and \cite{BLS}, on the other hand,  and, to the best of our knowledge,
represents the first attempt to considering
Kirchhoff plate theories 
originated by 3D  
	energies
 characterized 
by pre-stretches or spontaneous stretches
which are  
	heterogeneous in plane
 as well as along the thickness.
Pre-stretches 
of the form \eqref{spont_stretch} 
have been very recently treated in \cite{CRS17} and \cite{Kohn16}
to derive corresponding rod models with misfit. 
Moreover, similar prestretches have been
considered in \cite{LOP} to obtain 2D models
in the case of scaling orders
higher than the Kirchhoff one.

\smallskip

The central result of this paper is the derivation of a Kirchhoff plate theory from the 3D model outlined above.
With abuse of notation, we again denote by 
$\overline W^h$ 
the energy density 
associated with this system, 
which is minimized, for every $z\in\Omega_h$, 
at $\SO(3)\overline U^h(z)$.
Hence, the total free energy associated with a
deformation $v:\Omega_h\to\R^3$ is
\[
\overline{\mathcal E}^h(v)
\,=\,
\int\limits_{\Omega_h}
\overline W^h\big(z,\nabla v(z)\big)\,\rmd z.
\]
 
	Then, the same arguments as in \cite{Sch072}
	(which are in turn 
	a slight variant of those 
	employed in the seminal work \cite{FJM02})
	can be used to find
	the corresponding limiting Kirchhoff plate model,
 under the assumption that 
\begin{equation}
\label{b_check}
{\rm curl}\big({\rm curl}\,\check D\big)
\,=\,
0,\qquad\quad\mbox{with}\qquad
\check D(z')
:=
\!\!\int\limits_{\nicefrac{-1}{2}}^{\nicefrac{1}{2}}
\!\!\check{B}(z',t)\,\rmd t
\qquad\mbox{for a.e. }z'\in\omega, 
\end{equation}
where $\check B:\omega\to\Sym(2)$ is obtained from the spontaneous strain distribution $B$ appearing in \eqref{spont_stretch}
by omitting the third row and the third column.
Condition \eqref{b_check} deserves some comments. It
guarantees that $\check D$ 
is a symmetrized gradient,
and in turn allows for the construction of a standard
ansatz for the recovery sequence.
When instead condition \eqref{b_check} is violated,
usual arguments such as local modifications 
or perturbation arguments seem insufficient to prove 
the same $\Gamma$-limit.
In fact, we believe that
the general $\Gamma$-limit has to include a nonlocal term,  
	which can be interpreted
	as a ``first order stretching term''.
  To conclude the comments on condition \eqref{b_check},
let us add a trivial but important observation:  the difficulties one encounter in removing 
	the compatibility assumption on the matrix field $\check D$
	do not originate from
	the dependence of the spontaneous strain
	on the thickness variable, since they
	persist even in the case where such a dependence is absent.

The Kirchhoff model resulting from the dimension reduction is
 governed by the 
energy functional
\begin{equation}\label{gamma_lim}
\mathcal{E}^0(y)\,=\,\frac{1}{24}\int_{\omega}Q_2\left({\rm A}_{y}(z')-\overline{A}(z')\right)\,\rmd z'+{\rm ad.t.},
\end{equation}
on each $\WW^{2,2}$-isometry $y$, where {\rm ad.t.} stays for  
``additional terms"	
 not depending on $y$.  
 
In the above expression,
the quadratic form $Q_2$ is defined via a standard
relaxation of the second differential of the limiting
density $W$ at $\mathbb I_3$
(see formulas \eqref{Q3} and \eqref{Q2}),
the symbol ${\rm A}_{y}$ stands for the pull-back  
of the second fundamental form associated with $y(\omega)$
(see \eqref{2ff}),
and the \emph{target curvature tensor} $\overline A$ 
is defined as 
\begin{equation*}
\overline{A}(z')
\,=\,
12\int_{\nicefrac{-1}{2}}^{\nicefrac{1}{2}}t\check{B}(z',t)\,\rmd t,
\qquad\quad\mbox{ for a.e. } z'\in \omega.
\end{equation*}
 
	It is readily seen that in the case
	where the prestretch depends only
	on the thickness variable, a constant
	target cuvature $\overline A$ is produced.
	For polymer gels, this expression makes the anticipated connection between heterogeneous density $N$ of polymer chains (encoded by $\check{B}$) and curvature more evident, even if not fully explicit. Further, under some approximations or using numerical methods,  such a relation can be made explicit and thus can be actually employed in the design of shape morphing gel plates. In general, our derivation, which relies on an accurate description of the 3D swelling energy, offers an advantage over  theories based on purely elastic energies with ``pre-stretch'', where such a connection must be plugged in artificially.

It is worth mentioning that
beam theories derived 
from 2D energies of the form \eqref{gamma_lim},
in the limit as $\ep\to0$ when 
$\omega=(-\ell/2,\ell/2)\times(\ep/2\times\ep/2)$,
can be found in \cite{ADK16} for the case $\overline A$
constant and in \cite{FHMP16}
in the case $\overline A=\overline A(x_1)$.
To use a common terminology, these 1D theories may describe
\emph{narrow ribbons} of soft active materials.  

\smallskip

To give some insight on the minimizers of the derived 2D model \eqref{gamma_lim}, we focus on the case where the 
spontaneous strain $B$
is an odd function of the thickness variable
(which trivially fulfills condition \eqref{b_check}),
being at the same time a piecewise constant
function of the planar variable.
This case leads in turn to a 
piecewise constant target curvature tensor.
In Section \ref{sec_min},
we recall that in the case where
$\overline A$ is constant, then
a minimizer of the 2D energy 
$\mathcal E^0$ actually minimizes
the integrand function pointwise,
and the corresponding deformed 
configuration is a piece of
cylindrical surface
(see Lemma \ref{min_A_const} and the discussion preceding it).
In the case of a piecewise constant 
$\overline A$, 
some conditions (specified in Theorem \ref{general_cond}) under which cylindrical surfaces can be patched together 
resulting into an isometry must be fulfilled for the pointwise minimizer to exist. When these conditions hold, an example of minimum energy configuration, a patchwork of cylindrical surfaces, is sketched in Figure \ref{fig: folding}.
 
\begin{figure}[t]
\includegraphics[scale=0.3]{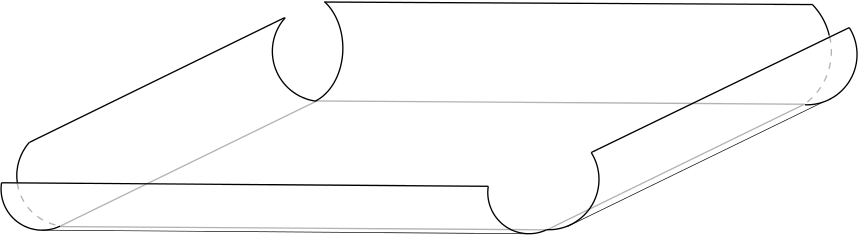}
\caption{
An example of a 2D minimum energy configuration.}
\label{fig: folding}
\end{figure}
\smallskip

The paper is structured as follows:
we deal with the theoretical
results
concerning dimension reduction
in Sections \ref{en_density}
and \ref{sec_min},
and then we apply them
to the case of thin gel sheets, 
in Section \ref{gel_sheets}.
In the final section, we draw some conclusions.
\smallskip

We end
this section by introducing some general 
notation which will be used throughout the paper.


\subsection[Notation]{Notation}
\label{notation}
For fixed $n\in \mathbb{N}$ we will denote by 
\begin{itemize}
\item  $\mathbb{R}^{n\times n}$ the vector space of real $n\times n$ matrices
and by $\mathbb{I}_n\in \mathbb{R}^{n\times n}$ the identity matrix,
\item $\Sym(n):=\{M\in \mathbb{R}^{n\times n}:M^{\trsp}=M\}$ the vector space of symmetric matrices, where by $M^{\trsp}\in \mathbb{R}^{n\times n}$ we denote the transpose of the matrix $M\in \mathbb{R}^{n\times n}$, 
\item ${\rm Skew}(n):=\{M\in \mathbb{R}^{n\times n}:M^{\trsp}=-M\}$ the set of skew-symmetric matrices,
\item $\SO(n):=\{M\in \mathbb{R}^{n\times n}: M^{\trsp}M=\mathbb{I}_n, {\rm det}(M)=1\}$ the set of all rotations of $\mathbb{R}^n$,
\item ${\rm Orth}(n):=\{M\in \mathbb{R}^{n\times n}: M^{\trsp}M=\mathbb{I}_n\}$ the set of all orthogonal transformations of $\mathbb{R}^n$,
\item ${\rm Trs}(n):=\{T_{v}:=\cdot+v: v\in \mathbb{R}^n\}$ the set of all translations in $\mathbb{R}^n$. Sometimes, to distinguish between translations in $\R^2$ and $\R^3$, we will denote by $\tau_v$ the elements of ${\rm Trs}(2)$, 
\item $M_{{\rm sym}}:=\frac{M+M^{\trsp}}2$ the symmetric part of the matrix $M\in \mathbb{R}^{n\times n}$,
\item $\tr\, M$ the trace of the matrix $M$ and $\tr^2 M:=(\tr M)^2$,
\item $|M|:=\sqrt{\sum_{i,j=1}^n|m_{ij}|^2}=\sqrt{{\rm tr}(M^{\trsp}M)}$, Frobenius norm of a matrix $M=[m_{ij}]_{i,j=1}^n\in \mathbb{R}^{n\times n}$,
\item $\mathcal{L}^n$ the $n$-dimensional Lebesgue measure,
\item $\mathcal{H}^n$ the $n$-dimensional Hausdorff measure.
\end{itemize}
\noindent
Furthermore, we give the following definitions:
\begin{itemize}
\item  $\check{F}\in \mathbb{R}^{2\times 2}$ is the $2\times 2$ submatrix of $F\in \mathbb{R}^{3\times 3}$ obtained by omitting the last row and the last column of $F$,
\item  given $G\in \mathbb{R}^{2\times 2}$, the matrix $\hat{G}\in \mathbb{R}^{3\times 3}$ associated to $G$ is defined as
$$\hat{G}=
\left(
\begin{array}{c|c}
G & \begin{array}{c}
0\\
0
\end{array} \\
\hline
0\quad 0 & 0
\end{array}
\right).
$$
\end{itemize}

We denote by $\{\mathsf{e}_1, \mathsf{e}_2\}$ the standard basis of $\mathbb{R}^2$ and by $\{\mathsf{f}_1, \mathsf{f}_2, \mathsf{f}_3\}$ the standard basis of $\mathbb{R}^3$. An open connected subset of $\mathbb{R}^2$ will be called \emph{domain}. Sometimes, for the sake of brevity, an open subset of $\mathbb{R}^2$ with Lischitz boundary will be called a \emph{Lipschitz subset} of $\mathbb{R}^2$. The closure of a set $S\subseteq\mathbb{R}^2$ is denoted by $\overline{S}$ or by ${\rm cl}(S)$.
\section[Three-dimensional model and derivation of the
corresponding 
Kirchhoff plate model]
{Three-dimensional model and derivation of the \\ corresponding 
Kirchhoff plate model}
\label{en_density}
Throughout the paper $\omega\subseteq \mathbb{R}^2$ will be a simply-connected, bounded domain with Lipschitz bo\-un\-da\-ry satisfying the following condition:
\begin{equation}\label{dom_cond}
\begin{aligned}
&\mbox{there exists a closed subset } \Sigma\subset \partial \omega\mbox{ with }  \mathcal{H}^1(\Sigma)=0 \mbox{ such that}\\
&\mbox{the outer unit normal exists and is continuous on } \partial \omega\setminus \Sigma. 
\end{aligned}
\end{equation}
The requirement that $\omega$ is 
a simply-connected domain has to do with the ``compatibility" condition of Theorem \ref{Saint_comp} below, which is imposed on 
the tensor-valued map $D_{\rm min}$ defined by \eqref{Bh} and \eqref{Dmin}.
The condition \eqref{dom_cond} is a standard requirement on the domain in order to have some 
density results for the space of $\WW^{2,2}$-isometric immersions 
of $\omega$ into $\mathbb{R}^{3}$
(see, e.g., \cite{Hor111} and \cite{Hor112}).

We are interested in a thin sheet 
$\Omega_h:=\omega\times(-h/2,h/2)$,
with  $0<h\ll 1$,
of a material characterized
by a \emph{spontaneous stretch}
given at each point of $\Omega_h$
in the form 
$\overline U^h(z)=
\mathbb I_3+h B\big(z',\frac{z_3}{h}\big)$,
for a suitable \emph{spontaneous strain}
$B\in\LL^{\infty}
(\Omega,\Sym(3))$.
The stretch $\overline U^h$ being 
\emph{spontaneous} for the material 
is modeled by introducing a energy density
whose minimum state
is precisely $\overline U^h(z)$
at each point $z$,
modulo superposed rigid
body rotations.
We denote by $U^h$
the spontaneous stretch given in  
terms of the rescaled variable
$x\in\Omega:=\Omega_1$.
Namely, $U^h(x)=\overline U^h(x',hx_3)$
so that
$U^h=\mathbb I_3+hB$. 

More in general, we consider a family
$\mathcal{B}=\{B^h\}_{h\geq 0}$ 
of spontaneous strains
such that
\begin{equation}\label{Bh}
B^h\to B^0=: B\qquad\mbox{ in }\ \LL^{\infty}(\Omega,\Sym(3)),\mbox{ as }h\to 0,
\end{equation}
the corresponding
family $\{U^h\}_{h\geq 0}$ 
of spontaneous stretches 
defined as
\begin{equation}\label{spont_strain}
U^h(x):=\mathbb{I}_3+hB^h(x)\quad\mbox{ for a.e.\ }x\in \Omega\mbox{ and for every }h\geq 0,
\end{equation}
and the associated family $\{W^h\}_{h>0}$ of (rescaled)
energy density functions
$W^h:\Omega\times \mathbb{R}^{3\times 3}
\to [0,+\infty]$, 
which we suppose to
be Borel functions satisfying 
the following properties:
\begin{itemize}
\item [(i)] for a.e.\ $x\in \Omega$, the map $W^h(x,\cdot)$ is frame indifferent, i.e.\
\[
W^h(x,F)=W^h(x, RF )\quad\mbox{ for every }F\in \mathbb{R}^{3\times 3}\mbox{ and every }R\in \SO(3);
\]
\item [(ii)]for a.e.\ $x\in \Omega$, 
$W^h(x,\cdot)$ is minimized precisely at  
$\SO(3)U^h(x)$;
\smallskip
\item [(iii)] 
there exists an open neighbourhood $\mathcal U$ of $\SO(3)$ 
and
$W\in C^2(\overline{\mathcal U})$
such that
\begin{equation}\label{cvg_wh}
\esssup_{x\in \Omega}\big\Vert W^h(x,\cdot)-W\big\Vert_{C^2(\mathcal{\overline{U}})}\to 0,\qquad\mbox{ as }h\to 0;
\end{equation}
\item [(iv)] there exists a constant $C>0$,
independent of $h$, 
such that for a.e.\ $x\in \Omega$ 
it holds that
\begin{equation}\label{gr_cond_wh} 
W^h(x,F)\geq C{\rm dist}^2\big(F, \SO(3)U^h(x)\big),
\qquad\mbox{for every } F\in \mathbb{R}^{3\times 3}.
\end{equation}
\end{itemize}

The most interesting scenarios occur when
the \emph{Cauchy-Green} distribution $C^h$
associated with the spontaneous stretch distribution $U^h$ 
$-$ namely, $C^h(x):=(U^h(x))^2$ $-$
is not kinematically compatible, i.e.\ there is no 
orientation-preserving deformation $v^h:\Omega\to \mathbb{R}^3$
such that $(\nabla v^h)^{\trsp}\nabla v^h=C^h$ in $\Omega$.
We also recall that, since $C^h(x)$ is a positive definite
symmetric matrix, 
the distribution $C^h$ can be interpreted as 
a metric on $\Omega$ and that, in this framework,
the kinematic compatibility of $C^h$
is equivalent to the condition that the
Riemann curvature tensor associated with
$C^h$ vanishes identically in $\Omega$
(see \cite{Ciarlet05} and \cite{LP11}).

\begin{defin}[Admissible family of free-energy densities]\label{adm_en_dens} Given 
$\mathcal{B}=\{B^h\}_{h\geq0}$
satisfying \eqref{Bh}
and the associated family
$\{U^h\}_{h\geq 0}$ defined in \eqref{spont_strain}, 
we call $\mathcal{B}\text{-}$\emph{admissible}
a family $\{W^h\}_{h>0}$ 
of Borel functions from  $\Omega\times \mathbb{R}^{3\times 3}$ to $[0,+\infty]$ 
fulfilling ${\rm (i)\text{-}(iv)}$.
\end{defin}

Given a $\mathcal{B}\text{-}$admissible 
family $\{W^h\}_{h> 0}$ of free-energy densities, with associated 
limiting density function $W$,
using a standard notation we define
the following quadratic form:
\begin{equation}\label{Q3}
Q_3(F):=D^2W(\mathbb{I}_3)[F,F],
\qquad \mbox{ for every }F\in \mathbb{R}^{3\times 3}.
\end{equation}
Moreover,
for every $G\in\mathbb{R}^{2\times 2}$,
we set
\begin{equation}\label{Q2}
Q_2(G):=\min_{d\in\mathbb{R}^3}Q_3\big(\hat{G}+d\otimes \mathsf{f}_3\big),
\end{equation}
referring to
Subsection \ref{notation}
for the notation $\hat G$.
Observe that
the limiting density $W$ inherits
properties (i), (ii) and (iv)
from convergence \eqref{cvg_wh}.
From this fact one can deduce that
$Q_2$ is indeed a quadratic form 
and that $Q_{k}$, for $k=2,3$,
has the following properties: 
\begin{itemize}
\item $Q_{k}$ is positive semi-definite on $\mathbb{R}^{k\times k}$ and positive definite when restricted to $\Sym(k)$,
\item $Q_{k}(F)=0$ for every $F\in {\rm Skew}(k)$,
\item $Q_{k}$ is strictly convex on $\Sym(k)$.
\end{itemize}
The proof of some of the listed properties can be found for instance in \cite{BLS, FJM02}.
We also refer to  
\cite[Proposition 11.9]{DalMaso} 
for a useful characterization of quadratic forms. 

Our limiting 2D model 
will be related to the 
2D density function
 $\overline{Q}_2:\omega\times\mathbb{R}^{2\times 2}\to [0,+\infty)$ 
defined as 
\begin{equation*}
\overline{Q}_2(x',G):=\min_{D\in \mathbb{R}^{2\times 2}}\int_{\nicefrac{-1}{2}}^{\nicefrac{1}{2}}Q_2\big(D+tG-\check{B}(x',t)\big)\,\rmd t,
\end{equation*}
for a.e.\ $x'\in \omega$ and every $G\in \mathbb{R}^{2\times 2}$,  where $\check B$ is related to the 3D model through \eqref{Bh}, using the notation introduced in Subsection \ref{notation}. 
Since $Q_2$ does not depend on the skew-symmetric part of 
its argument,
 
we can think of $\overline Q_2$ to be defined only on $\omega\times \Sym(2)$ as
\begin{equation}\label{Qbar}
\overline{Q}_2(x',G)=\min_{D\in \Sym(2)}\int_{\nicefrac{-1}{2}}^{\nicefrac{1}{2}}Q_2\big(D+tG-\check{B}(x',t)\big)\,\rmd t.
\end{equation}
 
  This minimum problem
can be solved explicitly,
as stated by the following lemma.

\begin{lemma}\label{Qbarmin}For a.e.\ $x'\in \omega$ and every $G\in \Sym(2)$,  the minimizer in \eqref{Qbar} is unique and coincides with 
\begin{equation}\label{Dmin}
D_{\min}(x')\,:=\,\int_{\nicefrac{-1}{2}}^{\nicefrac{1}{2}}\check{B}(x',t)dt.
\end{equation}
 
In other words, we have that
\begin{equation}
\label{Q_2_prima_forma}
\overline{Q}_2(x',G)=\int_{\nicefrac{-1}{2}}^{\nicefrac{1}{2}}Q_2\left(\int_{\nicefrac{-1}{2}}^{\nicefrac{1}{2}}\check{B}(x',s)\,\rmd s+tG-\check{B}(x',t)\right)\,\rmd t
\end{equation}
for a.e.\ $x'\in \omega$ and every $G\in \Sym(2)$.
\end{lemma}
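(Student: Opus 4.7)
The plan is to exploit the fact that $Q_2$ restricted to $\Sym(2)$ is a positive-definite (hence strictly convex) quadratic form, so the functional
\[
D\,\longmapsto\, f(D):=\int_{\nicefrac{-1}{2}}^{\nicefrac{1}{2}}Q_2\bigl(D+tG-\check{B}(x',t)\bigr)\,\rmd t
\]
is itself a strictly convex quadratic form on $\Sym(2)$ (note that $D+tG-\check B(x',t)$ is genuinely in $\Sym(2)$, since all three summands are symmetric). Consequently $f$ attains its minimum at a unique point, which can be characterized by vanishing of the first variation. The first step will be to write down this optimality condition explicitly and check that $D_{\min}(x')$ defined in \eqref{Dmin} satisfies it.

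Concretely, I would introduce the symmetric bilinear form $\mathcal{B}_2$ on $\Sym(2)\times\Sym(2)$ associated with $Q_2$ via polarization, so that $Q_2(M)=\mathcal{B}_2(M,M)$. Expanding the integrand as a function of $D\in\Sym(2)$ yields
\[
f(D)=Q_2(D)+2\mathcal{B}_2\!\left(D,\int_{\nicefrac{-1}{2}}^{\nicefrac{1}{2}}\!\bigl(tG-\check{B}(x',t)\bigr)\,\rmd t\right)+\int_{\nicefrac{-1}{2}}^{\nicefrac{1}{2}}Q_2\bigl(tG-\check{B}(x',t)\bigr)\,\rmd t.
\]
The key cancellation is $\int_{\nicefrac{-1}{2}}^{\nicefrac{1}{2}} t\,\rmd t=0$, which removes the $G$-contribution from the linear term and leaves
\[
f(D)=Q_2(D)-2\mathcal{B}_2(D,D_{\min}(x'))+\text{const},
\]
where the constant does not depend on $D$. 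Completing the square gives
\[
f(D)=Q_2\bigl(D-D_{\min}(x')\bigr)-Q_2\bigl(D_{\min}(x')\bigr)+\text{const},
\]
and strict positivity of $Q_2$ on $\Sym(2)$ forces the unique minimizer to be $D=D_{\min}(x')$.

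The only point requiring any care is the symmetry issue: the minimization in \eqref{Qbar} has been restricted to $\Sym(2)$, which is legitimate because $Q_2$ vanishes on $\mathrm{Skew}(2)$ and is invariant under replacing its argument by its symmetric part, as noted just before \eqref{Qbar}. I would briefly verify, using the definition \eqref{Q2} of $Q_2$ and the vanishing of $Q_3$ on $\mathrm{Skew}(3)$, that the skew part of $D$ contributes nothing to the integrand, justifying that the minimum over all $D\in\mathbb{R}^{2\times 2}$ and the minimum over $D\in\Sym(2)$ coincide and that uniqueness is only meaningful on $\Sym(2)$. This step is routine; no real obstacle is expected, since the whole argument is a quadratic optimization with the $t$-moment cancellation doing all the work. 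The formula \eqref{Q_2_prima_forma} then follows by substituting $D=D_{\min}(x')$ into the definition \eqref{Qbar}.
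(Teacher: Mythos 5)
Your proposal is correct and follows essentially the same route as the paper: expanding with the bilinear form associated with $Q_2$, using $\int_{-1/2}^{1/2} t\,\rmd t=0$ to identify the linear term as $-2\mathcal{B}_2(D,D_{\min})$, and completing the square so that strict convexity of $Q_2$ on $\Sym(2)$ gives the unique minimizer. The paper's proof is just a more compressed version of exactly this computation.
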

\begin{proof}
 
By using the bilinear form associated with $Q_2$ it is easy to see that for a.e.\ $x'\in \omega$ and every $G\in \Sym(2)$ it holds
\[
\begin{aligned}
&\min_{D\in \Sym(2)} \int_{\nicefrac{-1}{2}}^{\nicefrac{1}{2}}Q_2(D+tG-\check{B}(x',t))\,\rmd t\\
=&\int_{\nicefrac{-1}{2}}^{\nicefrac{1}{2}}Q_2\big(tG-\check{B}(x',t)\big)\,\rmd t-Q_2\big(D_{\min}(x')\big)+\min_{D\in \Sym(2)} Q_2\big(D-D_{\min}(x')\big).
\end{aligned}
\]
From this equality, the thesis trivially follows.
 
\end{proof}

Note that
$D_{\rm min}$,
which is in principle dependent on
$G$ from its definition,
turns out to be independent
of $G$ in the end.
This is not the case when,
e.g., the limiting density function $W$
depends explicitly on $x_3$,
not just through its spontaneous
stretch, see \cite{Sch072}.
Note also from hypothesis 
\eqref{Bh} that 
$D_{\rm min}\in 
\LL^{\infty}(\omega, \Sym(2))$. 
Finally, observe  for future reference that from 
\eqref{Q_2_prima_forma} 
one can  rewrite  $\overline{Q}_2$  in the more explicit  form
\begin{multline}
\label{limit_Q}
\overline{Q}_2(x',G)
\,=\,
\frac{1}{12}Q_2\left(G-12\int_{\nicefrac{-1}{2}}^{\nicefrac{1}{2}}t\check{B}(x',t)\,\rmd t\right)+\int_{\nicefrac{-1}{2}}^{\nicefrac{1}{2}}Q_2\big(\check{B}(x',t)\big)\,\rmd t\\
-Q_2\left(\int_{\nicefrac{-1}{2}}^{\nicefrac{1}{2}}\check{B}(x',t)\,\rmd t\right)-12Q_2\left(\int_{\nicefrac{-1}{2}}^{\nicefrac{1}{2}}t\check B(x',t)\,\rmd t\right),
\end{multline}
for a.e.\ $x'\in \omega$ and every $G\in \Sym(2)$. 

\medskip

 Before passing to the rigorous derivation of the 2D model, 
we provide
a technical lemma
consisting in two estimates 
for the family $\{W^h\}$ of energy densities and for its
uniform limit $W$ defined in a neighbourhood $\mathcal U$ of $\SO(3)$.
They are elementary consequences
of properties (ii) and (iii)
of Definition \ref{adm_en_dens} (hence we omit their proof). These estimates will be used 
in the proof of the 
$\Gamma\text{-}\liminf$ and 
the $\Gamma\text{-}\limsup$. 

\begin{lemma}\label{prop_wh_w}
 Let $\bar r>0$ be such that $B_{2\bar r}(\mathbb I_3)$ is contained in $\mathcal U$.  Then for every $\varepsilon>0$ there exists $h_{\varepsilon}>0$ and  $C>0$  such that for a.e.\ $x\in \Omega$, every $F\in B_{\bar{r}}(0)$ and every $h\in(0,h_{\varepsilon}]$ it holds that
\begin{equation}\label{Wh-W0}
\bigg|W^h\Big(x,U^h(x)+F\Big)-W\Big(\mathbb{I}_3+F\Big)\bigg|\leq \varepsilon|F|^2,
\end{equation}
\begin{equation}\label{Whbound}
\Big|W^h\Big(x,U^h(x)+F\Big)\Big|\leq  C |F|^2.
\end{equation}
\end{lemma}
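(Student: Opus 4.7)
The plan is to perform a Taylor expansion of $W^h(x,\cdot)$ around its minimizer $U^h(x)$ and of $W$ around $\mathbb I_3$, and to compare them using the uniform $C^2$ convergence in \eqref{cvg_wh}. First I would observe that for $h$ small enough, all intermediate points $U^h(x)+sF$ with $s\in[0,1]$ and $F\in B_{\bar r}(0)$ lie in $B_{2\bar r}(\mathbb I_3)\subset\mathcal U$: indeed $U^h(x)=\mathbb I_3+hB^h(x)$ and by \eqref{Bh} the family $\{B^h\}$ is uniformly bounded in $\LL^{\infty}$, so there exists $h_0>0$ such that $|U^h(x)-\mathbb I_3|\le\bar r$ for all $h\in(0,h_0]$ and a.e.\ $x\in\Omega$. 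In particular, for such $h$ the function $W^h(x,\cdot)$ is of class $C^2$ in a neighbourhood of $U^h(x)$ (via \eqref{cvg_wh}) with $W^h(x,U^h(x))=0$ and $DW^h(x,U^h(x))=0$ by property (ii) and the standard normalization of elastic energies; the same identities hold for $W$ at $\mathbb I_3$.

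Setting $\Phi^h(x,F):=W^h(x,U^h(x)+F)$ and $\Phi(F):=W(\mathbb I_3+F)$, the integral form of Taylor's remainder yields
\begin{equation*}
\Phi^h(x,F)=\int_0^1(1-s)\,D^2W^h\big(x,U^h(x)+sF\big)[F,F]\,\rmd s,
\end{equation*}
together with the analogous identity $\Phi(F)=\int_0^1(1-s)\,D^2W(\mathbb I_3+sF)[F,F]\,\rmd s$. Subtracting, $|\Phi^h(x,F)-\Phi(F)|$ is bounded by $|F|^2\sup_{s\in[0,1]}\big|D^2W^h(x,U^h(x)+sF)-D^2W(\mathbb I_3+sF)\big|$. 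Splitting this supremum via the triangle inequality into
\begin{equation*}
\big|D^2W^h(x,U^h(x)+sF)-D^2W(U^h(x)+sF)\big|+\big|D^2W(U^h(x)+sF)-D^2W(\mathbb I_3+sF)\big|,
\end{equation*}
the first summand is bounded, uniformly in $x$, by the essential supremum appearing in \eqref{cvg_wh} and is therefore $\le\varepsilon/2$ for $h\le h_\varepsilon$, while the second is $\le\varepsilon/2$ by the uniform continuity of $D^2W$ on the compact set $\overline{B_{2\bar r}(\mathbb I_3)}$ combined with $|U^h(x)-\mathbb I_3|\le h\|B^h\|_{\LL^\infty}\to 0$. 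This proves \eqref{Wh-W0}.

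The quadratic bound \eqref{Whbound} follows directly from the Taylor expansion of $\Phi^h$ alone, using that $\|D^2W^h(x,\cdot)\|_{\LL^\infty(\overline{B_{2\bar r}(\mathbb I_3)})}$ is bounded by a constant independent of $h$ and of a.e.\ $x$, which is again a consequence of \eqref{cvg_wh} together with $D^2W\in C^0(\overline{\mathcal U})$. There is essentially no conceptual obstacle; the only subtlety is maintaining the a.e.\ uniformity in $x$ of all the estimates, but this is granted directly by the essential supremum in \eqref{cvg_wh} and by the uniform $\LL^\infty$ bound on $\{B^h\}$ provided by \eqref{Bh}.
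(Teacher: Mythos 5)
Your proof is correct and follows precisely the standard argument the authors have in mind when they write that these estimates ``are elementary consequences of properties (ii) and (iii)'' and omit the proof: Taylor expansion with integral remainder around the (interior) minimizer $U^h(x)$, the uniform $C^2$ estimate from \eqref{cvg_wh} for the first difference, and uniform continuity of $D^2W$ on the compact $\overline{B_{2\bar r}(\mathbb I_3)}$ combined with $\|U^h-\mathbb I_3\|_{\LL^\infty}\le h\sup_h\|B^h\|_{\LL^\infty}\to 0$ for the second.
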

 We also introduce two auxiliary functions that will be used in the proof of $\Gamma$-convergence result. Letting $\bar{r}>0$ be as in Lemma \ref{prop_wh_w} above,
we define 
\begin{equation}
\label{rho00}
\rho^0(F):=W( \mathbb{I}_3+F)-\frac{1}2D^2W(\mathbb{I}_3)[F]^2\quad\mbox{ and }\quad \rho(s):=\sup_{|F|\leq s}|\rho^0(F)|
\end{equation}
for every $F\in B_{\bar{r}}(0)$ and every $s>0$ . 
  
As  a direct consequence of
the regularity of $W$,  we have that 
\begin{equation}\label{rho0}
\rho(s)/s^2\to 0\quad\mbox{as } s\to 0.
\end{equation}


\bigskip
In order to state and prove the  following  compactness and 
$\Gamma$-convergence results 
we will use the
standard notation
\[
\nabla'y:=\left(\partial_1y\bigg\vert\,\partial_2y\right)\qquad\mbox{and}\qquad \nabla_hy:=\left(\nabla'y\,\bigg\lvert\,\frac{1}{h}\partial_3y\right).
\]
 Moreover,  given a
$\mathcal{B}\text{-}$admissible 
family $\big\{W^h\big\}_{h>0}$  
of energy densities
in the sense of Definition \ref{adm_en_dens},
for every $h>0$ 
we define the rescaled
\emph{free energy functional} 
$\mathcal{E}^h:\WW^{1,2}(\Omega,\mathbb{R}^3)\to [0,+\infty]$ as
\begin{equation}
\label{elastic_energy}
\mathcal{E}^h(y):=\int_{\Omega}W^h\big(x,\nabla_hy(x)\big)\,\rmd x,
\quad\quad\mbox{for every }\, y\in \WW^{1,2}(\Omega,\mathbb{R}^3).
\end{equation}


\begin{thm}[Compactness]\label{cpt_thm} Let $\big\{y^h\big\}_{h>0}\subseteq \WW^{1,2}\big(\Omega,\mathbb{R}^3\big)$ be a sequence which satisfies 
\begin{equation}\label{cpt_wh}
\limsup_{h\to 0}\frac{1}{h^2} \mathcal E^h(y^h) <+\infty.
\end{equation}
Then $\big\{\nabla_hy^h\big\}_{h>0}$ is precompact in $\LL^2\big(\Omega,\mathbb{R}^{3\times 3}\big)$, that is:
there exists a (not relabeled) subsequence such that  $\nabla_hy^h\to \left(\nabla'y\big|\nu\right)$ in $\LL^2\big(\Omega,\mathbb{R}^{3\times 3}\big)$, where $\nu(x):=\partial_1y(x)\wedge \partial_2y(x)$. Moreover, the limit $\left(\nabla'y\big|\nu\right)$ has the following properties:
\begin{itemize}
\item[$(i)$] $\left(\nabla'y\big|\nu\right)(x)\in \SO(3)$ for a.e.\ $x\in\Omega$,
\item[$(ii)$] $\left(\nabla'y\big|\nu\right)\in \WW^{1,2}\big(\Omega,\mathbb{R}^{3\times 3}\big)$ and
\item[$(iii)$] $\left(\nabla'y\big|\nu\right)$ is independent of $x_3$.
\end{itemize}
 In other words, the limiting deformation $y$ belongs to the class $\Wiso(\omega)$ defined as in \eqref{Aiso}. 
\end{thm}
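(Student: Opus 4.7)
My plan follows the now-standard path of Friesecke--James--Müller \cite{FJM02} and its pre-stretched adaptation in \cite{Sch072}; the only modification required by the present setting is to absorb the $O(h)$-small spontaneous stretch $U^h$ into the basic distance estimate. Combining the growth condition \eqref{gr_cond_wh} with the energy bound \eqref{cpt_wh} yields
$\int_\Omega \mathrm{dist}^2\bigl(\nabla_h y^h,\SO(3)U^h\bigr)\,\rmd x\leq Ch^2$, and since $U^h=\mathbb{I}_3+hB^h$ with $\{B^h\}$ bounded in $L^\infty$, the Hausdorff distance between $\SO(3)U^h(x)$ and $\SO(3)$ is uniformly $O(h)$. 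Therefore $\|\mathrm{dist}(\nabla_h y^h,\SO(3))\|_{L^2(\Omega)}\leq Ch$.

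The core step is the application of the quantitative geometric rigidity theorem in its thin-plate form: covering $\omega$ with squares of side $h$ and invoking the rigidity inequality on each rescaled cylinder, followed by interpolation between the resulting locally constant rotations, produces a field $R^h\in W^{1,2}(\omega,\SO(3))$ satisfying
\begin{equation*}
\|\nabla_h y^h - R^h\|_{L^2(\Omega)}^2 \,\leq\, Ch^2, \qquad \|\nabla' R^h\|_{L^2(\omega)}^2 \,\leq\, C.
\end{equation*}
Up to a not-relabelled subsequence, $R^h\rightharpoonup R$ weakly in $W^{1,2}(\omega,\R^{3\times 3})$ and strongly in $L^2$, with $R(x')\in\SO(3)$ a.e.; in particular $\nabla_h y^h\to R$ strongly in $L^2(\Omega,\R^{3\times 3})$.

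To identify the limit as a deformation, observe that $\partial_3 y^h = h(\nabla_h y^h)\mathsf{f}_3\to 0$ in $L^2$, while after subtracting the mean of $y^h$ over $\Omega$ the full gradient $\nabla y^h$ is $L^2$-bounded, so Poincar\'e--Wirtinger keeps the normalized sequence bounded in $W^{1,2}(\Omega,\R^3)$. Extracting a further subsequence converging weakly in $W^{1,2}$ and strongly in $L^2$ to some $y$, one gets $\partial_3 y\equiv 0$ (so $y$ is independent of $x_3$) and $\nabla' y=(R\mathsf{f}_1\,|\,R\mathsf{f}_2)$. Since $R\in\SO(3)$ pointwise, the third column satisfies $R\mathsf{f}_3=(R\mathsf{f}_1)\wedge(R\mathsf{f}_2)=\partial_1 y\wedge\partial_2 y=:\nu$, and $R\in W^{1,2}(\omega,\SO(3))$ delivers (i)--(iii) simultaneously; the identity $(\nabla' y)^\trsp\nabla' y=\mathbb{I}_2$ places $y$ in $\Wiso(\omega)$.

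The main technical obstacle is the rigidity-with-covering construction of $R^h$: producing a genuinely Sobolev rotation field with both a sharp $L^2$ approximation of $\nabla_h y^h$ and a uniform $L^2$ bound on $\nabla' R^h$ requires delicate interpolation between the locally constant rotations furnished by the rigidity inequality on each cell of size $h$. This is by now a classical step that carries over verbatim from the standard thin-plate literature, as the in-plane dependence of $B$ is entirely transparent at the compactness level and enters only through the trivial $O(h)$ perturbation of $\SO(3)$ by $\SO(3)U^h$.
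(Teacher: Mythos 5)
Your proof is correct and follows essentially the same route as the paper's: establish $\int_\Omega\mathrm{dist}^2(\nabla_h y^h,\SO(3))\,\rmd x\le Ch^2$ by exploiting the growth condition \eqref{gr_cond_wh} together with the fact that $\SO(3)U^h$ is uniformly $O(h)$-close to $\SO(3)$, then invoke the Friesecke--James--M\"uller compactness machinery. The only cosmetic differences are that the paper derives the distance bound via a triangle inequality with an explicitly chosen rotation $R_{h,F}(x)$ realizing $\mathrm{dist}(F,\SO(3)U^h(x))$ rather than via the Hausdorff distance, and that the paper then simply cites \cite[Theorem 4.1]{FJM02} as a black box where you re-sketch its proof (covering by $h$-cells, cell-wise rigidity, interpolation to a $W^{1,2}$ rotation field, and limit identification).
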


To prove this compactness result,
we can use the same argument as in the 
proof of the corresponding result
in  \cite{FJM02} where the spontaneous stretch is $\mathbb I_3$ in place of our $U^h=\mathbb I_3+hB$. Note that the same argument holds in the case of spontaneous stretch of the form $\mathbb I_3+h^{\alpha}B$ with $\alpha\geq 1$. 

\begin{proof}[Proof of Theorem \ref{cpt_thm}]
We will show that the sequence $\big\{\nabla_hy^h\big\}_{h>0}\subseteq \LL^2\big(\Omega,\mathbb{R}^{3\times 3}\big)$ satisfies 
\begin{equation}\label{ef_2}
\limsup_{h\to 0}\frac{1}{h^2}\int_{\Omega}{\rm dist}^2\left(\nabla_hy^h(x),\SO(3)\right)\,\rmd x <+\infty.
\end{equation} 
The thesis then directly follows by applying Theorem 4.1 from \cite{FJM02}.
Fix $h>0$ and  $F\in\mathbb{R}^{3\times 3}$. For a.e.\ $x\in \Omega$ there exists $R_{h,F}(x) \in \SO(3)$ such that  
$${\rm dist}\Big(F,\SO(3)\big(\mathbb{I}_3+hB^h(x)\big)\Big)=\big|F-R_{h,F}(x)(\mathbb{I}_3+hB^h\big(x)\big)\big|.$$ We have the following estimate:
\begin{equation}\label{cpt_est}
\begin{aligned}
{\rm dist}^2(F,\SO(3))\leq & \, \big|F-R_{h,F}(x)\big|^2\leq 2\Big|F-R_{h,F}(x)\big(\mathbb{I}_3+hB^h(x)\big)\Big|^2+2\Big|hR_{h,F}(x)B^h(x)\Big|^2\\
 \overset{\text{\eqref{gr_cond_wh}}}{\leq}  & \, \frac{2}{C}W^h(x,F)+6h^2\big|B^h(x)\big|^2
\end{aligned}
\end{equation}
for a.e. $x\in \Omega$.
By \eqref{cpt_wh} and \eqref{cpt_est} we have that \eqref{ef_2} holds true.
\end{proof}

Given a bounded Lipschitz domain 
$S\subset\mathbb{R}^2$, 
the class of the isometries of 
$S$ into $\R^3$ is denoted by  
\begin{equation}\label{Aiso}
{\rm W_{\rm iso}^{2,2}}(S,\R^3)
\,=\,
\Big\{y\in \WW^{2,2}\big(S,\mathbb{R}^3\big):\big|\partial_1y\big|=\big|\partial_2y\big|=1,\,\partial_1y\cdot\partial_2y=0\Big\}.
\end{equation}
For the sake of brevity, 
we equivalently use the symbol 
${\rm W_{\rm iso}^{2,2}}(S)$.
We recall that for a given $y\in \WW^{2,2}\big(\omega,\mathbb{R}^3\big)$ the pull-back of the second fundamental form of $y(\omega)$ at the point $y(x')$ 
is given by 
\begin{equation}
\label{2ff}
{\rm A}_y(x')
\,:=\,(\nabla'y(x'))^{\trsp}\nabla'\nu(x'),\qquad \mbox{where }\; \nu(x'):=\partial_1y(x')\wedge\partial_2y(x')\quad \mbox{ for a.e. } x'\in \omega.
\end{equation}
As we  are going to  see, 
the 2D limiting 
model will depend
on deformations 
$y\in{\rm W_{\rm iso}^{2,2}}
(\omega,\R^3)$
through ${\rm A}_y$. 
 
More precisely, the limiting model
 
will be described by the
energy functional 
$\mathcal E^0:\WW^{1,2}(\Omega,\mathbb{R}^3)
\to[0,+\infty]$
defined as
\begin{equation}
\label{limit_functional}
\mathcal{E}^0(y):=\left\{\begin{aligned}
&\frac{1}{2}\int_{\omega}\overline{Q}_2(x',{\rm A}_y(x'))\,\rmd x',\\
&+\infty,
\end{aligned}
\begin{aligned}
&\quad\mbox{ for } y\in \Wiso(\omega),\\
&\quad\mbox{ otherwise,}
\end{aligned}\right.
\end{equation}
where $\overline Q_2$ is defined 
through \eqref{Bh} and \eqref{Q_2_prima_forma}.

 We also recall that smooth functions are dense in the class of $\WW^{2,2}$-isometric immersions, as stated in the following theorem proved in \cite{Hor111}. 

\begin{thm}\label{densityHornung}
Assume that $S\subseteq\mathbb{R}^2$ is a bounded Lipschitz domain which satisfies \eqref{dom_cond}.
Then $\Wiso(S)\cap C^{\infty}(\overline{S},\mathbb{R}^3)$ is $\WW^{2,2}$-strongly dense in $\Wiso(S)$.
\end{thm}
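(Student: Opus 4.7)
The plan is to exploit the structural/developability theory for $\WW^{2,2}$ isometric immersions due to Pakzad, Kirchheim and Müller, and to build smooth approximations that respect the isometry constraint rulingwise.

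\textbf{Step 1: Structure of $y\in\Wiso(S)$.}
First I would recall that for any $y\in\Wiso(S,\mathbb R^3)$ the domain $S$ decomposes (up to a negligible set) into a relatively open ``flat'' set $F_y$, where $\nabla'y$ is locally constant, and its complement $R_y$, which is foliated by a family of maximal line segments (rulings) along each of which $\nabla'y$ is constant, the rulings being pairwise disjoint inside $S$ and having both endpoints on $\partial S$. On $R_y$ one can parametrize a neighborhood by choosing a Lipschitz transversal curve $\gamma:I\to S$ and a Lipschitz direction field $n:I\to\Sph^1$ for the rulings, with the associated ``leaf map'' $(s,t)\mapsto\gamma(s)+tn(s)$ a bi-Lipschitz chart onto its image. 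In these coordinates the isometry $y$ is uniquely determined by the Cauchy datum $(y\circ\gamma,\;\nabla'y\circ\gamma)$ together with the direction field $n$, and the second fundamental form is concentrated on the transversal direction, with $|\mathrm A_y|^2$ integrable over $S$.

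\textbf{Step 2: Local smoothing inside $R_y$.}
Next I would approximate, on each such coordinate patch, the datum on the transversal curve and the ruling direction field by smooth objects. Concretely: mollify $y\circ\gamma$, $\nabla' y\circ\gamma$ and $n$ on the parameter interval $I$, and project onto the constraint manifold (unit first fundamental form, orthonormal partial derivatives, $|n|=1$) to obtain smooth data $(y_\varepsilon^\gamma,R_\varepsilon^\gamma,n_\varepsilon)$ near $\gamma$. The ruled extension $y_\varepsilon(\gamma(s)+tn_\varepsilon(s)):=y_\varepsilon^\gamma(s)+t\,R_\varepsilon^\gamma(s)n_\varepsilon(s)$ is then by construction a smooth isometric immersion on the (slightly shrunken) image of the chart. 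The $\WW^{2,2}$ convergence $y_\varepsilon\to y$ on the patch follows from the Cauchy-type formula expressing $\mathrm A_{y_\varepsilon}$ in terms of the derivatives of $n_\varepsilon$ and from Lebesgue's theorem, using that the Jacobian of the leaf map controls the change of variables and is bounded away from $0$ on compact subpatches.

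\textbf{Step 3: Gluing and boundary control.}
Then I would cover $\overline S$ by finitely many such patches, plus neighborhoods of connected components of $F_y$ (where $y$ is affine and smoothing is trivial). On overlaps of two ruled patches the underlying ruling structure coincides with that of $y$, so the two local smoothings agree up to their common data on a transversal and can be glued via a convex combination \emph{of the transversal Cauchy data} $(y^\gamma,R^\gamma,n)$, which, after projecting back to the constraint, yields a single consistent smooth datum; extending rulingwise then produces one global smooth isometric immersion $y_\varepsilon$. To meet the regularity requirement $y_\varepsilon\in C^\infty(\overline S,\mathbb R^3)$, assumption \eqref{dom_cond} enters: the existence of a continuous outer normal on $\partial S\setminus\Sigma$ allows to extend $y$ to a slightly larger domain (by continuing the rulings across $\partial S$ a short distance, using the transversality granted by continuity of the normal) and then to mollify strictly inside, so that the final smoothing is defined up to the boundary.

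\textbf{Main obstacle.}
The truly delicate point is Step 3: isometries do not form a linear space, so one cannot just use a partition of unity. The gluing must be performed \emph{at the level of the parametrizing data} on transversal curves and of the direction field, and one has to make sure that (a) different ruled regions, and (b) ruled regions adjacent to flat regions, share compatible Cauchy data under smoothing. This is where the simple connectedness of $\omega$ (used in the paper to invoke this theorem via \cite{Hor111}) and condition \eqref{dom_cond} are essential, as they prevent pathological boundary behavior of the ruling family and ensure the countable family of maximal connected ruled components can be consistently ordered and matched. Controlling $\|\mathrm A_{y_\varepsilon}-\mathrm A_y\|_{\LL^2}$ near the interface between $F_y$ and $R_y$, where the ruling directions may oscillate, is the quantitative heart of the argument and requires the integrability $\mathrm A_y\in\LL^2$ together with an equi-integrability estimate for the smoothed direction fields.
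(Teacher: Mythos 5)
The paper does not prove Theorem~\ref{densityHornung}; it is recalled verbatim as a result of Hornung \cite{Hor111} (see also \cite{Hor112} for the underlying structure theory), and invoked as a black box in the construction of the recovery sequence. So the comparison is really with Hornung's paper, not with anything proved here.

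Your sketch does capture the overall skeleton of Hornung's argument: developability of $W^{2,2}$ isometries, covering of the non-flat set by maximal line segments, parametrization by a transversal Cauchy datum plus a ruling direction field, mollification of that datum, rulingwise extension, and then gluing. So the strategy is the right one. Two misattributions, though: simple connectedness of $\omega$ is invoked in the paper for the Saint-Venant compatibility result (Theorem~\ref{Saint_comp}), not for this density theorem — Hornung's theorem does not require it — and the boundary hypothesis \eqref{dom_cond} is there precisely to control the rulings near $\partial S$, not to permit a naive domain enlargement.

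The genuine gaps are in Steps~2 and 3, and they are not minor. First, after mollifying the direction field $n$ to $n_\varepsilon$, the leaf map $(s,t)\mapsto\gamma(s)+t\,n_\varepsilon(s)$ covers a \emph{different} region of the plane than the original rulings did; you assert convergence "on the (slightly shrunken) image of the chart" but the matching of the perturbed ruling foliation to the original domain, with uniform control on the Jacobian and on where the perturbed rulings exit $S$, is one of the technical cores of Hornung's proof and cannot be waved through. Second, your gluing step — take a convex combination of the transversal Cauchy data on overlaps and "project back to the constraint" — is not a well-defined or obviously $W^{2,2}$-convergent operation: the constraint set (orthonormal frames plus unit direction fields compatible with a single global developable surface) is nonlinear and nonconvex, and projecting pointwise does not produce data that integrate to an isometry on the union of two patches. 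Hornung avoids this by a much more careful global construction that exhausts the ruled components by a countable family with controlled overlaps and handles the flat/ruled interfaces separately; your proposal correctly names this as "the quantitative heart" but does not supply it. As written, the proposal is a plausible roadmap but not a proof.
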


 This  density result
will be used for the 
construction of the recovery sequence
in the proof of the 
$\Gamma\text{-}\limsup$
convergence result below.

\begin{thm}[\texorpdfstring{$\Gamma$}{gamma}-limit]\label{Gamma_lim}
The following convergence results 
hold true:
\smallskip

{\rm (i)}\ \textsc{$\Gamma\text{-}\liminf$:} 
for every sequence $\{y^h\}_{h>0}$ and every $y$
such that $y^h\rightharpoonup y$ weakly in 
$\WW^{1,2}(\Omega,\mathbb{R}^3)$, it holds 
$$\mathcal{E}^0(y)
\,\leq\,
\liminf_{h\to 0}\frac{1}{h^2}\mathcal{E}^h(y^h),$$

{\rm (ii)}\ 
\textsc{$\Gamma\text{-}\limsup$:} 
under the hypothesis
\begin{equation}\label{comp_Dmin}
{\rm curl}\big({\rm curl}\, D_{{\rm min}}\big)=0\;\mbox{ in }\; \WW^{-2,2}\big(\omega, \Sym(2)\big),
\end{equation} 
with $D_{\rm min}$ defined by 
\eqref{Bh} and \eqref{Dmin},
we have that 
for every $y\in \WW^{1,2}(\Omega, \mathbb{R}^3)$ there exists a sequence $\{y^h\}_{h>0}$ 
such that $y^h\to y$ in $\WW^{1,2}(\Omega, \mathbb{R}^3)$, fulfilling 
$$\mathcal{E}^0(y)
\,=\,
\lim_{h\to 0}\frac{1}{h^2}\mathcal{E}^h(y^h).$$ 
\end{thm}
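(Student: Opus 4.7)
The plan is to follow the Friesecke--James--M\"uller template for the Kirchhoff regime \cite{FJM02}, adapted to the $x'$-dependent spontaneous strain $U^h = \mathbb{I}_3 + h B^h$ in the spirit of \cite{Sch072}. The $\Gamma$-liminf rests on quantitative rigidity plus convex lower semicontinuity of the quadratic form $Q_3$. The $\Gamma$-limsup requires an explicit ansatz which (i) uses \eqref{comp_Dmin} to realize $D_{\min}$ as an in-plane symmetrized gradient and (ii) inserts an optimal third-column correction that passes from $Q_3$ to $Q_2$ via \eqref{Q2} and from $Q_2$ to $\overline Q_2$ via Lemma \ref{Qbarmin}.

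\textbf{Lower bound.} Assume $\liminf h^{-2}\mathcal{E}^h(y^h) < +\infty$. Theorem \ref{cpt_thm} yields $y \in \Wiso(\omega)$ with $R := (\nabla'y \mid \nu) \in \SO(3)$ a.e. Coercivity \eqref{gr_cond_wh} combined with the uniform convergence $U^h \to \mathbb{I}_3$ and the quantitative rigidity estimate of \cite{FJM02} provides rotation fields $R^h : \omega \to \SO(3)$ with $R^h \to R$ in $\LL^2$ and $\Vert \nabla_h y^h - R^h \Vert_{\LL^2}^2 \leq C h^2$. Setting
\begin{equation*}
G^h := \frac{(R^h)^\trsp \nabla_h y^h - U^h}{h} \in \LL^2(\Omega, \mathbb{R}^{3\times 3}),
\end{equation*}
this sequence is bounded, and up to extraction $G^h \rightharpoonup G$. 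Standard arguments \cite{FJM02, Sch072} identify the symmetric $2\times 2$ block of $G + \sym \check B$ as affine in $x_3$, of the form $x_3 A_y(x') + D(x')$ for some $D \in \LL^2(\omega, \Sym(2))$. By frame indifference $W^h(x, \nabla_h y^h) = W^h(x, U^h + h G^h)$; on the truncation set $\{|G^h| \leq h^{-1/2}\}$, Lemma \ref{prop_wh_w} and \eqref{rho0} yield a Taylor lower bound by $\tfrac{h^2}{2} Q_3(G^h)$ modulo errors vanishing after rescaling. Weak $\LL^2$-lower semicontinuity of $G \mapsto \int Q_3(G)\,\rmd x$, together with the pointwise inequalities $Q_3(G) \geq Q_2(\sym\check G)$ and $\int_{-1/2}^{1/2} Q_2(x_3 A_y + D - \check B)\,\rmd x_3 \geq \overline Q_2(x', A_y)$ (valid for every $D \in \Sym(2)$ by Lemma \ref{Qbarmin}), then deliver the desired bound $\liminf h^{-2}\mathcal{E}^h(y^h) \geq \mathcal{E}^0(y)$.

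\textbf{Upper bound.} By Theorem \ref{densityHornung} and a diagonal argument it suffices to construct a recovery sequence for $y \in \Wiso(\omega) \cap C^\infty(\overline\omega, \mathbb{R}^3)$. Hypothesis \eqref{comp_Dmin}, combined with Saint-Venant compatibility on the simply connected domain $\omega$, supplies $f \in \WW^{2,2}(\omega, \mathbb{R}^2)$ with $\sym \nabla' f = D_{\min}$, which is then smoothed by convolution. For a.e.\ $(x', x_3) \in \Omega$ let $\tilde d(x', x_3) \in \mathbb{R}^3$ be the pointwise $Q_3$-minimizer from \eqref{Q2} applied to $x_3 A_y(x') + D_{\min}(x') - \check B(x', x_3)$, and approximate $\tilde d$ by smooth $d^h$ satisfying $h \Vert d^h \Vert_\infty \to 0$ and $d^h \to \tilde d$ in $\LL^2$. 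The candidate is
\begin{equation*}
y^h(x', x_3) := y(x') + h x_3 \nu(x') + h \big(\nabla' y(x')\big) f(x') + h^2 \int_0^{x_3} \phi^h(x', t) \, \rmd t,
\end{equation*}
with $\phi^h$ a smooth $\mathbb{R}^3$-valued corrector incorporating $d^h$ together with an adjustment of the out-of-plane components of $B^h$. A direct computation, crucially relying on the identity $\partial_k y \cdot \partial_i \partial_j y = 0$ for $i, j, k \in \{1,2\}$ (valid because $y$ is an isometric immersion and responsible for the vanishing of tangential curvature contributions after symmetrization), shows that $(R^\trsp \nabla_h y^h - U^h)/h$ converges uniformly to $\widehat{x_3 A_y + D_{\min} - \check B} + \tilde d \otimes \mathsf{f}_3$ modulo skew-symmetric terms inert for $Q_3$. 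Frame indifference, Lemma \ref{prop_wh_w} and \eqref{rho0} then produce
\begin{equation*}
h^{-2} \mathcal{E}^h(y^h) \longrightarrow \tfrac12 \int_\Omega Q_2\big(x_3 A_y + D_{\min} - \check B\big) \, \rmd x \,=\, \mathcal{E}^0(y),
\end{equation*}
the last equality via Lemma \ref{Qbarmin}. Strong convergence $y^h \to y$ in $\WW^{1,2}(\Omega, \mathbb{R}^3)$ is immediate from the ansatz.

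\textbf{Main obstacle.} The delicate step is the recovery construction: $D_{\min}$ must appear in the limit as a symmetrized gradient on $\omega$, which is possible precisely when \eqref{comp_Dmin} holds on the simply connected domain $\omega$. Without this assumption, no smooth in-plane ansatz can absorb the $x_3$-averaged part of $\check B$, and an additional nonlocal first-order stretching term is expected in the true $\Gamma$-limit, as anticipated in the introduction. The lower bound is by comparison routine once rigidity and the standard truncation trick have been set up; the only genuine use of the Kirchhoff scaling there is the affine-in-$x_3$ identification of the symmetric $2\times 2$ block of the weak limit $G$.
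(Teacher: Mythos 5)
Your proposal follows essentially the same route as the paper's proof: for the lower bound, rigidity plus a truncated Taylor expansion and lower semicontinuity of $Q_3$, followed by the pointwise reductions $Q_3 \geq Q_2 \geq \overline Q_2$; for the upper bound, Hornung density, the standard Kirchhoff ansatz augmented by the $h\,\nabla'y\,\tilde g$ term that produces the symmetrized gradient $\nabla'_{\rm sym}\tilde g$, the optimal third-column corrector, and Saint-Venant compatibility to identify $D_{\min}$ as a symmetrized gradient, closed by a diagonal argument. Your notational change in the lower bound ($G^h := ((R^h)^\trsp \nabla_h y^h - U^h)/h$, which is the paper's $A^h = G^h - B^h$) is equivalent. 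One small inaccuracy: Saint-Venant compatibility (Theorem~\ref{Saint_comp}) only provides $w \in \WW^{1,2}(\omega,\mathbb{R}^2)$ with $\nabla_{\rm sym} w = D_{\min}$, not $\WW^{2,2}$ as you claim; this does not affect the argument since you immediately mollify, but the stated regularity is not what the cited result gives. Also, the inequality $\int_{-1/2}^{1/2} Q_2(x_3 A_y + D - \check B)\,\rmd x_3 \geq \overline Q_2(x',A_y)$ for arbitrary $D \in \Sym(2)$ follows directly from the definition~\eqref{Qbar} of $\overline Q_2$ as a minimum over $D$; Lemma~\ref{Qbarmin} is not needed for the lower bound (it identifies the minimizer, which is what makes the upper bound sharp).
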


The convergence results of the 
previous theorem amount to saying that 
the sequence of energy functionals $\frac{1}{h^2}\mathcal{E}^h$ $\Gamma$-converge to $\mathcal{E}^0$, as $h\to 0$, in the strong and weak topology of $\WW^{1,2}(\Omega,\mathbb{R}^3)$. 
The operator ${\rm curl}$ inside the parenthesis 
in condition \eqref{comp_Dmin} acts on
a $2\!\times\!2$ matrix by taking the ${\rm curl}$ of each row, giving as a result a two-dimensional vector.
We postpone the proof of the theorem 
after the following example.

\begin{example}\label{example_1}
{\rm 
Note that when $D_{\rm min}$ is constant,  
condition \eqref{comp_Dmin} is trivially
satisfied. 
In particular, 
recalling definition \eqref{Dmin},
condition \eqref{comp_Dmin}
is trivially satisfied whenever the map
$x\mapsto \check B$ is constant in $x'$.
At the same time, the same condition is
satisfied with $D_{\rm min}\equiv0$
by every map $x\mapsto \check B(x)$
which is nothing but odd in $x_3$.
We also note that it is possible
to realize $D_{\rm min}\equiv C\neq0$
through a map $x\mapsto \check B(x)$ 
which is not constant in $x'$. 
To construct such an example, one can
fix $B_{m}\in\Sym(2)\setminus\{0\}$ and define
\[
\check B(x)
\,:=\,
\sum_{i=1}^{N}\check B_{i}(x_3)\chi_{\omega_{i}}(x'),
\qquad\mbox{for a.e. }x\in \Omega,
\]
where $\{\omega_{i}\}_{i=1}^{N}$ is a partition of $\omega$ and $\{\check B_{i}\}_{i=1}^N
\subseteq
\LL^{\infty}\big((-1/2,1/2),\mathbb{R}^{3\times 3}\big)$ is a family of functions satisfying
$$\int_{\nicefrac{-1}{2}}^{\nicefrac{1}{2}}\check B_{i}(x_3)\,\rmd x_3=B_{m},\qquad\mbox{for every }i=1,\ldots,N,$$
and such that $\check B_i(x_3)\neq \check B_j(x_3)$ 
for every $i\neq j$ and every $x_3$. 
This gives rise to $\check B$ which
is piecewise constant in $x'$ 
(but not constant in the same variable),
and in turn to 
\[
D_{{\rm min}}(x')
\,=\,
\sum_{i=1}^{N}\chi_{\omega_{i}}(x')
\int_{\nicefrac{-1}{2}}^{\nicefrac{1}{2}}\check B_{i}(t)\,\rmd t
\,=\,
B_{m}.
\]

Note also that the above defined map $\check B$ can give rise to a non-constant tensor valued map $x'\mapsto \int_{\nicefrac{-1}{2}}^{\nicefrac{1}{2}}t\check B(x',t)\,\rmd t$, which is interpreted in Section \ref{sec_min} (in each point $x'$) as the target curvature tensor which appears in the 2D limiting model. Indeed, in the case of $N=2$, by choosing $\check B_1(x_3):=(x_3+1)\mathbb {I}_2$ and $\check B_2(x_3):=(x_3^3+1)\mathbb{I}_2$ for all $x_3\in (-1/2,1/2)$, we obtain a simple example of $\check B$ for which $D_{\rm min}$ is constant, while the tensor-valued map $x'\mapsto \int_{\nicefrac{-1}{2}}^{\nicefrac{1}{2}}t\check B(x',t)\,\rmd t$ is piecewise constant.

}
\end{example}

The proof of the $\Gamma\text-\liminf$ is a 
straightforward adaptation
to the case of a family of
energy densities $\{W^h\}$
with wells $\SO(3)\big(\mathbb I_3+hB^h\big)$, 
of the corresponding result in \cite{FJM02}
pertaining the case of a homogeneous $W$
(minimized at $\SO(3)$).
For the construction of 
the recovery sequence in
the proof of the $\Gamma\text-\limsup$ one has 
instead to add an additional term
with respect to the classical construction
(see the third summand on the 
right-hand side of \eqref{recovery_simple}).
Such additional term
gives rise, in the limit as $h\to0$,
to a symmetrized gradient 
(see formula \eqref{formula_Gamma_limsup}),
in a position where the map $D_{\rm min}$ should appear
in order to match the $\Gamma$-limit
(cfr. \eqref{Qbar} and \eqref{limit_functional}).
For this purpose, condition \eqref{comp_Dmin}
guarantees that the map $D_{\rm min}$
is a symmetrized gradient,
thanks to Theorem \ref{Saint_comp}.
Throughout the following proof $\overline C$ is a generic positive constant, varying form line to line and independent of all other quantities.

\begin{proof}[Proof of Theorem \ref{Gamma_lim}] 
(i) \textsc{$\Gamma\text{-}\liminf$:} Let $y\in \WW^{1,2}(\Omega, \mathbb{R}^3)$ and $\{y^h\}$ be such that $y^h\rightharpoonup y$ weakly in $\WW^{1,2}(\Omega, \mathbb{R}^3)$. 
Assume that $\liminf_{h\to 0}\mathcal{E}^h\big(y^h\big)/h^2<+\infty$, otherwise the proof is trivial. 
Then, as shown in \cite{FJM02}
and up to a (not relabeled) subsequence,
there exists a family of piecewise constant maps $R^h:Q_h\to \SO(3)$ such that 
\begin{equation}\label{rh_estimate}
\int_{Q_h\times (-1/2,1/2)}\big|\nabla_hy^h(x)-R^h(x')\big|^2\,\rmd x\leq \overline{C}h^2,
\end{equation} 
and $R^h\to (\nabla' y|\nu)$ in $\LL^2(\Omega,\mathbb{R}^3)$ as $h\to 0$, 
where $Q_h:=\bigcup_{Q_{a,3h}\subseteq \omega}Q_{a,h}$ and  $Q_{a,h}:=a+(-h/2,h/2)^2$  for every $h>0$ and $a\in h\mathbb{Z}^2$.
Moreover, the sequence  
$G^h:\Omega\to \mathbb{R}^{3\times 3}$ defined by
\begin{equation}\label{Gh}
G^h(x',x_3):=\left\{\begin{aligned}
&\frac{R^h(x')^{\trsp}\nabla_hy^h(x',x_3)-\mathbb{I}_3}{h}\\
&0
\end{aligned}
\begin{aligned}
&\,\mbox{ for }x\in Q_h\times (-1/2,1/2),\\
&\,\mbox{ elswhere in } \Omega,
\end{aligned}\right.
\end{equation}
converges weakly in $\LL^2(\Omega,\mathbb{R}^{3\times 3})$, as $h\to 0$, to some $G\in \LL^2(\Omega,\mathbb{R}^{3\times 3})$ 
such that
\[
\check{G}(x)=\check{G}(x',0)+x_3{\rm A}_y(x'),
\qquad\mbox{for a.e.\ }x\in \Omega.
\]
Letting $\chi_h$ be the characteristic function of the set $Q_h\cap \big\{|G^h(x)|\leq 1/\sqrt{h}\big\}$ we also have that $\chi_hG^h\rightharpoonup G$ in $\LL^2(\Omega,\mathbb{R}^{3\times 3})$ as $h\to 0$.
Now,  by denoting $A^h:=G^h-B^h$ and by using also the convergence in \eqref{Bh} we have 
\[
A^h\rightharpoonup G-B\mbox{ in }\LL^2(\Omega,\mathbb{R}^{3\times 3})\quad\mbox{and}\quad \Vert hA^h\Vert_{\LL^{\infty}\big(Q_h\cap \{|G^h(x)|\leq 1/\sqrt{h}\}\big)}\to 0.
\]
By using frame indifference of $W^h$,  \eqref{rho0} and the estimate \eqref{Wh-W0} from Lemma \ref{prop_wh_w}, we have that for a fixed $\varepsilon>0$,  there exists $\bar{h}>0$ such that the following estimates hold for every $h\in(0, \bar{h})$:
\[
\begin{aligned}
\frac{1}{h^2}\int_{\Omega}W^h(x,\nabla_hy^h(x))\,\rmd x
\geq &\frac{1}{h^2}\int_{\Omega}\chi_hW^h(x,R^h(x')^{\trsp}\nabla_hy^h(x))\,\rmd x\\
= &\frac{1}{h^2}\int_{\Omega} \chi_hW^h\Big(x,\big(\mathbb{I}_3+hB^h(x)\big)+hA^h(x)\Big)\,\rmd x\\
\geq &\frac{1}{h^2}\int_{\Omega}\chi_h\frac{1}{2}D^2W(\mathbb{I}_3)\big[hA^h(x)\big]^2-\chi_h\varepsilon|hA^h(x)|^2 +\chi_h\rho^0\big(hA^h(x)\big)\,\rmd x\\
\geq&\int_{\Omega}\chi_h \frac{1}{2}Q_3\big(A^h(x)\big)-\chi_h\varepsilon|A^h(x)|^2-\chi_h\frac{\rho\big(\big|hA^h(x)\big|\big)}{|hA^h(x)|^2}|A^h(x)|^2\,\rmd x,
\end{aligned}
\]
where $\rho^0$ and $\rho$ are defined   in \eqref{rho00} . Since $Q_3$ is lower semicontinuous in the weak topology of $\LL^2(\Omega,\mathbb{R}^{3\times 3})$  and since \eqref{rho0} holds , passing to $\liminf$ as $h\to 0$  in the above inequality we obtain
$$\liminf_{h\to 0}\frac{1}{h^2}\int_{\Omega}W^h(x,\nabla_hy^h(x))\,\rmd x\geq \int_{\Omega}\frac{1}{2}Q_3\big(G(x)-B(x)\big)\,\rmd x-\overline{C}\varepsilon, $$
where $\overline{C}>0$ is such that $||A^h||_{\LL^2(\Omega, \mathbb{R}^{3\times 3})}\leq \overline{C}$.
Finally, by letting $\varepsilon\to 0$ and by using the fact that $Q_3(F)\geq Q_2(\check{F})$ for every $F\in \mathbb{R}^{3\times 3}$  we get that
\[\begin{aligned}
\liminf_{h\to 0}\frac{1}{h^2}\int_{\Omega}W^h(x,\nabla_hy^h(x))\,\rmd x\geq &\frac{1}{2}\int_{\Omega}Q_2\big(\check{G}(x',0)+x_3{\rm A}_y(x')-\check{B}(x',x_3)\big)\,\rmd x\\
\geq & \frac{1}{2}\int_{\omega}\overline{Q}_2\big(x',{\rm A}_y(x')\big)\,\rmd x',
\end{aligned}
\]
which proves $\Gamma\text{-}\liminf$ inequality.

(ii) \textsc{$\Gamma\text{-}\limsup$:} Let us prove $\Gamma\text{-}\limsup$ inequality for a given $y\in \WW^{2,2}_{{\rm iso},0}(\omega):=\Wiso(\omega)\cap C^{\infty}(\overline{\omega},\mathbb{R}^3)$. Once we have proved it, $\Gamma$-$\limsup$ inequality will follow for any $y\in \Wiso(\omega)$ by the density result of Theorem \ref{densityHornung} and the continuity of the limiting functional $\mathcal E^0$ with respect to $\WW^{2,2}$ convergence.
Suppose that $\mathcal{E}^0(y)<+\infty$ (otherwise the proof is trivial). Let $d\in C^{\infty}_{c}(\Omega,\mathbb{R}^3)$ and define $D:\Omega\to \mathbb{R}^3$ by
\[D(x',x_3):=\int_0^{x_3}d(x',t)\,\rmd t,\qquad\mbox{for every }(x',x_3)\in \omega\times (-1/2,1/2)=\Omega.\]
Let $\tilde{g}:=(\tilde{g}_1,\tilde{g}_2)\in C^{\infty}_{c}(\mathbb{R}^2,\mathbb{R}^2)$. We consider the family of functions $y^h$ of the form
\begin{equation}
\label{recovery_simple}
y^h(x):=y(x')+h\big[x_3\nu(x')+\nabla'y(x')\tilde{g}(x')\big]+h^2D(x',x_3),
\end{equation}
for every $x\in \Omega$ and every $h>0$, whose ($h$-rescaled) gradient $\nabla_hy^h$ reads as
\[
\nabla_hy^h(x)=(\nabla'y(x')\big|\nu(x'))+h\big(\nabla' \big[x_3\nu(x')+\nabla' y(x') \tilde{g}(x')\big]\big|d(x)\big)+h^2\big(\nabla'D(x)\big|0\big),
\]
for every $x\in \Omega$ and every $h>0$,
One can easily verify that, in particular, $\{y^h\}_{h>0}\subseteq \WW^{2,\infty}(\Omega, \mathbb{R}^3)$ and that it converges in $\WW^{1,2}$ to $y$, as $h\to 0$.
Denote by $R(x'):=\big(\nabla' y(x')\big|\nu(x')\big)$ for every $x'\in \omega$. 
Set 
\[
C^h(x):=R^{\trsp}(x')\Big(\big(\nabla'\big[x_3\nu(x')+\nabla'y(x') \tilde{g}(x')\big]\big|d(x)\big)+h\big(\nabla' D(x)\big|0\big)\Big)-B^h(x),\quad\mbox{ for a.e.\ } x\in \Omega,
\]
and note that
$C^h$ converges in  $\LL^{\infty}(\Omega,\mathbb{R}^{3\times 3})$ to the function 
\[
\Omega\ni x\mapsto  R^{\trsp}(x')\big(\nabla'\big[x_3\nu(x')+\nabla'y(x') \tilde{g}(x')\big]\big|d(x)\big)-B(x)\in \mathbb{R}^{3\times 3}.
\]
With this notation, we have that $R^{\trsp}(x')\nabla_h y^h(x)=U^h(x)+hC^h(x)$ for a.e.\ $x\in  \Omega$ with $U^h$ given by \eqref{spont_strain}.
By the frame indifference of $W^h(x,\cdot)$, boundedness of $C^h$ and $B^h$ in $\LL^{\infty}$-norm and the estimates \eqref{Wh-W0} and \eqref{Whbound} from Lemma \ref{prop_wh_w}, there exists $\overline C,\bar{h}>0$ such that
\[
\frac{1}{h^2}W^h\Big(x,\nabla_hy^h(x)\Big)=\frac{1}{h^2}W^h\Big(x,R^{\trsp}(x')\nabla_h y^h(x)\Big)=\frac{1}{h^2}W^h\Big(x,U^h(x)+hC^h(x)\Big)\leq \overline{C},
\]
for a.e.\ $x\in \Omega$ and every $0<h\leq \overline{h}$.
Moreover, 
\[\frac{1}{h^2}W^h\Big(x,\nabla_hy^h(x)\Big)\to \frac{1}{2}Q_3\Big(R^{\trsp}(x')\big(\nabla'\big[x_3\nu(x')+\nabla'y(x') \tilde{g}(x')\big]\big|d(x)\big)-B(x)\Big)\]
pointwise almost everywhere in $\Omega$, as $h\to 0$.
Then, by applying dominated convergence theorem we get that
\[
\frac{1}{h^2}\int_{\Omega}W^h\Big(x,\nabla_hy^h(x)\Big)\,\rmd x 
\to 
\frac{1}{2}\int_{\Omega}Q_3\Big(R^{\trsp}(x')\big(\nabla'\big[x_3\nu(x')+\nabla'y(x') \tilde{g}(x')\big]\big|d(x)\big)-B(x)\Big)\,\rmd x
\] 
as $h\to 0$.
 To proceed, for a.e.\ $x\in \Omega$ we denote by $\overline F(x)$ the $2\times 2$ part of the argument of $Q_3$ in the above integral.   
Now let $\ell:\Sym(2)\to \R^3$ be the map that associates to every $F\in \Sym(2)$ the unique element of ${\rm argmin}_{c\in \R^3}\,Q_3\big(\hat F+(c\otimes {\sf f}_3)_{\sym}\big)$. By writing down the first order necessary condition for the minimum problem defining $\ell(F)$, one can easily deduce that the map $\ell$ is linear.
Define
$\overline d:\Omega\to \R^3$ as
\[
\overline d(x):=R(x')\left(\ell\big(\overline F_{\sym}(x)\big)+(2B_{13}\;\; 2B_{23}\;\; B_{33})^{\trsp}(x)-\left(\begin{matrix}\big(\nabla'\big[x_3\nu(x')+\nabla'y(x') \tilde{g}(x')\big]\big)^{\trsp}\nu(x')\\ 0\end{matrix}\right)\right),
\]
for a.e.\ $x\in \Omega$.
Since $\overline F_{\sym}\in \LL^{\infty}(\Omega, \Sym(2))$, $B\in \LL^{\infty}(\Omega,\Sym(3))$ and $y$ and $\tilde g$ are smooth vector fields, it follows that $\overline d$ belongs to $\LL^2(\Omega, \R^3)$.
By choosing $d$ to be equal to $\bar d$, one can readily check that
\begin{equation}\label{ell_barF}
\Big(R^{\trsp}(x')\big(\nabla'\big[x_3\nu(x')+\nabla'y(x') \tilde{g}(x')\big]\big|\overline d(x)\big)-B(x)\Big)_{\sym}=\left(
\begin{array}{c|c}
\overline F_{\sym}(x) & \begin{array}{c}
0\\
0
\end{array} \\
\hline
0\quad 0 & 0
\end{array}
\right)+\Big(\ell\big(\overline F_{\sym}(x)\big)\otimes {\sf f}_3\Big)_{\sym}.
\end{equation}
Observe further that
\[
\begin{aligned}
(\nabla'y)\tilde{g}=&\,\tilde{g}_1\partial_1y+\tilde{g}_2\partial_2y,\\ 
\nabla'\big((\nabla'y)\tilde{g}\big)=&\,\big(\tilde{g}_1\partial_1\partial_1y+\tilde{g}_2\partial_1\partial_2y\big\vert\,\tilde{g}_1\partial_2\partial_1y+\tilde{g}_2\partial_2\partial_2y\big)+\nabla'y\nabla'\tilde{g}.
\end{aligned}
\]
Since $y\in \WW^{2,2}_{{\rm iso},0}(\omega)$, it holds that  
$\partial_{i}y\cdot\partial_{j}y=\delta_{ij}$ and $\partial_{i}\partial_{j}y\cdot\partial_{k}y=0$ for every $i,j,k=1,2$. In turn, we have that
\begin{equation*}\label{nabla_g}
(\nabla'y)^{\trsp}\,\nabla'\big((\nabla'y)\tilde{g}\big)=\nabla'\tilde{g}.
\end{equation*}
Now, by direct computation we obtain
 
\begin{equation}\label{Fsym}
\begin{aligned}
\overline F_{\sym}(x)=
 \,x_3{\rm A}_{y}(x')+\nabla'_{\sym}\tilde g(x')-\check B(x),\qquad\mbox{ for a.e.\ }x\in \Omega.
\end{aligned}
\end{equation}
Finally, by definition of $Q_2$, \eqref{ell_barF} and \eqref{Fsym} it holds that  
  
\[
\begin{aligned}
&\int_{\Omega}Q_3\Big(R^{\trsp}(x')\big(\nabla'\big[x_3\nu(x')+\nabla' y(x') \tilde{g}(x')\big]\big| \overline d(x) \big)-B(x)\Big)\,\rmd x\\
=&\int_{\Omega}Q_2\big(x_3{\rm A}_y(x')+\nabla'_{{\rm sym}}\tilde{g}(x')-\check{B}(x)\big)\,\rmd x.
\end{aligned}
\]
Therefore, the density of $C_{c}^{\infty}(\Omega,\mathbb{R}^3)$ in $\LL^2(\Omega,\mathbb{R}^3)$  and 
a diagonal argument give us that 
\begin{equation}
\label{formula_Gamma_limsup}
\limsup_{h\to 0}\frac{1}{h^2}\int_{\Omega}W^h(x,\nabla_hy^h(x))\,\rmd x=\int_{\omega}\frac{1}{2}\int_{\nicefrac{-1}{2}}^{\nicefrac{1}{2}}Q_2\big( x_3{\rm A}_y(x')+\nabla'_{{\rm sym}}\tilde{g}(x')-\check{B}(x',x_3)\big)\,\rmd x_3\,\rmd x'.
\end{equation}
The compatibility assumption \eqref{comp_Dmin} on $D_{{\min}}$ and Theorem \ref{Saint_comp} guarantee the existence of the map $w\in \WW^{1,2}(\omega,\mathbb{R}^2)$ such that  
$D_{{\min}}(x')=\nabla_{{\rm sym}}w(x')$ for a.e. $x'\in \omega$. 
Thus, by using the density of $C_{c}^{\infty}(\mathbb{R}^2,\mathbb{R}^2)$ (when restricted to $\omega$) in $\WW^{1,2}(\omega, \mathbb{R}^2)$ and a diagonal argument one more time, we prove $\Gamma$-$\limsup$ inequality for a given $y\in \WW^{2,2}_{{\rm iso},0}(\omega)$.
\end{proof}

The following result is used
in the proof of the $\Gamma\text{-}\limsup$.
It can be found in 
 \cite[Theorem 3.2]{Ciarlet2005}  and 
has to do with the so-called  
Saint-Venant compatibility condition in  $\LL^2$ .

\begin{thm}\label{Saint_comp}
Let $S\subseteq \mathbb{R}^2$ be a simply-connected bounded domain with Lipschitz boundary  and let  $A\in \LL^2(S,\Sym(2))$ . Then 
 
\begin{equation}
{\rm curl}\big({\rm curl}\, A\big)=0\;\mbox{ in }\;\WW^{-2,2}(S,\Sym(2))\quad\Longleftrightarrow\quad A=\nabla_{{\rm sym}}w\;\mbox{ for some }\;w\in \WW^{1,2}(S, \mathbb{R}^2).
\end{equation}
 
Moreover $w$ is unique up to rigid displacements.
\end{thm}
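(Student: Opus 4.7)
One direction is routine and I would dispatch it first: assuming $A=\nabla_{{\rm sym}}w$ with $w\in \WW^{1,2}(S,\R^2)$, I substitute $a_{11}=\partial_1w_1$, $a_{22}=\partial_2w_2$, and $a_{12}=\tfrac12(\partial_1w_2+\partial_2w_1)$ into
${\rm curl}({\rm curl}\,A)=\partial_{11}a_{22}-2\partial_{12}a_{12}+\partial_{22}a_{11}$
and invoke commutativity of distributional mixed partials to obtain $0$ in $\WW^{-2,2}(S)$.

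For the non-trivial converse, my strategy is to first produce an auxiliary scalar ``rotation'' field $\omega$, and then integrate once more to build $w$. I would set
\[
\Phi:=\bigl(\partial_2a_{11}-\partial_1a_{12},\ \partial_2a_{12}-\partial_1a_{22}\bigr)\in\WW^{-1,2}(S,\R^2),
\]
and observe that $\partial_1\Phi_2-\partial_2\Phi_1=-{\rm curl}({\rm curl}\,A)=0$ in $\WW^{-2,2}(S)$, so that $\Phi$ is scalar-curl-free. Since $S$ is simply-connected and Lipschitz, the distributional Poincar\'e lemma on Sobolev scales furnishes $\omega\in\LL^2(S)$, unique modulo an additive constant, with $\nabla\omega=\Phi$ in $\WW^{-1,2}(S,\R^2)$.

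With $\omega$ in hand I would look for $w=(w_1,w_2)\in\WW^{1,2}(S,\R^2)$ realising the prescribed partial derivatives
\[
\partial_1w_1=a_{11},\quad \partial_2w_1=a_{12}+\omega,\qquad \partial_1w_2=a_{12}-\omega,\quad \partial_2w_2=a_{22}.
\]
The curl-free compatibility condition on each of the two prescribed row vectors reduces \emph{exactly} to $\partial_1\omega=\Phi_1$ and $\partial_2\omega=\Phi_2$, which hold by the construction of $\omega$. A second application of the Poincar\'e lemma at the $\LL^2/\WW^{1,2}$ scale (again using simple connectedness of $S$) then produces $w_1$ and $w_2$, and upon symmetrising $\nabla w$ the $\pm\omega$ contributions cancel, giving $\nabla_{{\rm sym}}w=A$. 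The main analytic obstacle is the distributional Poincar\'e lemma at the $\LL^2/\WW^{-1,2}$ level on a simply-connected Lipschitz planar domain: this is classical but non-trivial, and it is precisely the ingredient which the cited reference supplies.

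Finally, for uniqueness, I would argue as follows. If $\nabla_{{\rm sym}}w\equiv 0$ in $\LL^2$, then $\nabla w$ is distributionally skew-symmetric, hence of the form $\omega J$ with $J=\bigl(\begin{smallmatrix}0&1\\-1&0\end{smallmatrix}\bigr)$ and some distribution $\omega$. Equality of mixed second partials of $w$ then forces $\partial_1\omega=\partial_2\omega=0$, so $\omega$ must be a constant; integration yields $w(x)=c+\omega Jx$ for some $c\in\R^2$, i.e., an infinitesimal planar rigid displacement.
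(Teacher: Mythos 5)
The paper does not give its own proof of this theorem; it simply cites \cite[Theorem 3.2]{Ciarlet2005}, and your sketch is precisely the route taken in that reference: one first integrates the curl-free vector field $-\,\mathrm{curl}\,A\in\WW^{-1,2}(S,\R^2)$ to obtain the rotation potential $\omega\in\LL^2(S)$, then integrates the two resulting curl-free rows $\big(a_{11},a_{12}+\omega\big)$ and $\big(a_{12}-\omega,a_{22}\big)$ to produce $w\in\WW^{1,2}(S,\R^2)$, with the Lions/Ne\v cas regularity lemma supplying the regularity upgrade across Sobolev scales at each of the two integrations. You correctly isolate the distributional Poincar\'e lemma on a simply-connected bounded Lipschitz domain as the only non-trivial analytic ingredient, and both the forward implication and the uniqueness-modulo-rigid-displacements argument (skew $\nabla w$ has constant entries, hence $w$ is an infinitesimal rigid motion) are standard and correct.
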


\begin{remark} {\rm By standard arguments of $\Gamma$-convergence it can be shown that the above analysis holds also in the case when the appropriate body forces are present. More precisely, the above results can be applied to the sequence of functionals $\{\mathcal F^h\}_{h>0}$ defined by 
$$\mathcal{F}^h(y)=\mathcal{E}^h(y)-\int_{\Omega}f^h(x)\cdot y(x)\,\rmd x,\qquad\mbox{ for every } y\in \WW^{1,2}(\Omega, \mathbb{R}^3),$$ 
where $\{f^h\}_{h\geq 0}\subseteq \LL^2(\Omega,\mathbb{R}^3)$ is the family of body forces such that
$$\frac{f^h}{h^2}\rightharpoonup f^0\quad\mbox{ weakly  in }\LL^2(\Omega,\mathbb{R}^3)\quad\mbox{ and }\quad \int_{\Omega}f^h(x)\,\rmd x=0\,\mbox{ for every } h\geq 0.$$
The sequence $\{\mathcal F^h\}$ $\Gamma-$converges, as $h\to 0$, to 
\[\mathcal{F}^0(y):=\left\{\begin{aligned}
&\mathcal{E}^0(y)-\int_{\omega}f(x')\cdot y(x')\,\rmd x',\\
&+\infty,
\end{aligned}
\begin{aligned}
&\quad\mbox{ for } y\in \Wiso(\omega),\\
&\quad\mbox{ otherwise.}
\end{aligned}\right.\]
where $f(x'):=\int_{\nicefrac{-1}{2}}^{\nicefrac{1}{2}}f^0(x',t)\,\rmd t$ for a.e. $x'\in \omega$. }
\end{remark}


\section[2D energy minimizers]
{2D energy minimizers}
\label{sec_min}
\subsection[\texorpdfstring{$x'$}{x}-dependent target 
curvature tensor \texorpdfstring{$\overline{A}$}{A} and pointwise minimizers]{\texorpdfstring{$x'$}{x}-dependent target curvature tensor \texorpdfstring{$\overline{A}$}{A} and pointwise minimizers}
In this section, we discuss the minimizers of the derived $2D$ model in some special cases. Recall that the $2D$ limiting energy functional $\mathcal{E}^0$ is given by  
\[\mathcal{E}^0(y)=\left\{\begin{aligned}
&\frac{1}{2}\int_{\omega}\overline{Q}_2\big(x',{\rm A}_y(x')\big)\,\rmd x',\\
&+\infty,
\end{aligned}
\begin{aligned}
&\qquad\mbox{ for } y\in \Wiso(\omega),\\
&\qquad\mbox{ otherwise,}
\end{aligned}\right.\]
where $\Wiso(\omega)$ is the set of $\WW^{2,2}$-isometric immersions of $\omega$ into $\mathbb{R}^3$, defined by \eqref{Aiso}. From formula \eqref{limit_Q}, we have that 
\begin{equation}
\label{energia_limite}
\mathcal{E}^0(y)
\,=\, 
\frac{1}{24}\int_{\omega}Q_2\bigg({\rm A}_y(x')-12\int_{\nicefrac{-1}{2}}^{\nicefrac{1}{2}}t\check{B}(x',t)\,\rmd t\bigg)\,\rmd x'+{\rm ad.t.} 
\end{equation}
for every $y\in \Wiso(\omega)$, where ${\rm ad.t.}$ stays for ``additional terms" (not depending on $y$) 
 and is given by
\begin{equation}\label{adt_limit_energy}
{\rm ad.t.}\,:=\,\frac{1}{2}\int_{\omega}\int_{\nicefrac{-1}{2}}^{\nicefrac{1}{2}}Q_2\big(\check{B}(x',t)\big)\,\rmd t
-Q_2\left(\int_{\nicefrac{-1}{2}}^{\nicefrac{1}{2}}\check{B}(x',t)\,\rmd t\right)-12Q_2\left(\int_{\nicefrac{-1}{2}}^{\nicefrac{1}{2}}t\check B(x',t)\,\rmd t\right)\rmd x'.
\end{equation} 
 Recall that ${\rm A}_{y}$ is 
the pull-back of the second fundamental form 
associated with $y(\omega)$
(see \eqref{2ff}), 
hence it gives information on
the curvature \emph{realized} by the deformation $y$. 
On the other hand, 
when reading the expression for $\mathcal{E}^0$,
it is natural to define the \emph{target} curvature tensor 
\begin{equation}
\label{def:target_curvature}
\overline{A}(x'):=12\int_{\nicefrac{-1}{2}}^{\nicefrac{1}{2}}t\check{B}(x',t)\,\rmd t,
\qquad\mbox{ for a.e. }x'\in \omega,
\end{equation}
which encodes the spontaneous curvature
of the system.    
While, for a.e.\ $x'$, the tensor
$\overline A(x')$
(which depends on 
$\check B$ and in turn on the 
family of spontaneous strains $\{B^h\}$, 
see formula \eqref{Bh})
is a given 
$2{\times}2$ symmetric matrix
with possibly nonzero determinat,
it is a well known result of differential geometry 
that every smooth $y\in \Wiso(\omega)$ satisfies 
$\det{\rm A}_y=0$ in $\omega$.
From \cite[Lemma 2.5]{Pak04}, one can deduce that the same 
property holds for any arbitrary $y\in\Wiso(\omega)$,
a.e.\ in $\omega$.
Our aim is to determine explicitly some classes of minimizers. 
More precisely, 
introducing the notation
\begin{equation}
\label{sec_fund_form_cyl}
\mathcal{F}
\,:=\,
\big\{F\in \Sym(2):\det F=0\big\},
\end{equation}
and having in mind the inequality
\begin{equation*}
\min_{\Wiso(\omega)}\mathcal{E}^0
\,\geq\,
\frac{1}{24}\int_{\omega}\min_{F\in \mathcal{F}}Q_2\big(F-\overline{A}(x')\big)\,\rmd x'+{\rm ad.t.},
\end{equation*}
we will focus our attention on 
\emph{pointwise minimizers} of $\mathcal{E}^0$.
Namely, on those $y\in \Wiso(\omega)$ such that
\begin{equation}
\mathcal{E}^0(y)
\,=\, 
\frac{1}{24}\int_{\omega}\min_{F\in \mathcal{F}}Q_2\big(F-\overline{A}(x')\big)\,\rmd x'+{\rm ad.t.}
\,\,=
\min_{\Wiso(\omega)}\mathcal{E}^0.
\end{equation}
To go on, let us consider the set 
\begin{equation}\label{setN}
\mathcal{N}(x'):=\underset{F\in\mathcal{F}}{\rm argmin}\,Q_2\big(F-\overline{A}(x')\big),
\end{equation}
for a.e.\ $x'\in \omega$. 
Note that $\mathcal{N}(x')\neq\emptyset$
for a.e.\ $x'\in \omega$, because
$Q_2$ is a positive definite quadratic form 
(when restricted to $\Sym(2)$) 
and $\mathcal{F}$ is a closed subset of $\Sym(2)$. 
To accomplish our program, we would like to
have some explicit representation of the 
elements of $\mathcal{N}(x')$, 
for a.e.\ $x'\in \omega$, 
also in view of the application
which motivates our analysis (see Section \ref{gel_sheets}.).
Therefore, we restrict our attention to case of 
$W$ \emph{isotropic}, i.e. such that 
$$W(RFP)=W(F),\quad\mbox{ for every } F\in \mathbb{R}^{3\times 3} \mbox{ and every } R,P\in \SO(3).$$
This implies the existence of constants
$\lambda\in \mathbb{R}$
and $\mu>0$,
called Lam\' e moduli, such that 
$$Q_3(F)
\,:=\,
D^2W(\mathbb{I}_3)[F,F]
\,=\,2\mu|F_{{\rm sym}}|^2+\lambda\,{\rm tr}^2F,$$
for every $F\in \mathbb{R}^{3\times 3}$
(see \cite{Gur}).
In turn, from this expression one can
easily show that
\begin{equation}\label{Qgel}
Q_2(F)
\,:=\,
\min_{d\in\mathbb{R}^3}Q_3\big(\hat{F}+d\otimes \mathsf{f}_3\big)
\,=\,
2\mu\left(|F_{{\rm sym}}|^2+\beta\,{\rm tr}^2F\right),
\qquad\mbox{for every }F\in \mathbb{R}^{2\times 2},
\end{equation} 
where $\beta$ has the expression
\begin{equation}
\label{beta}
\beta\,=\,
\frac{\lambda}{2\mu+\lambda}.
\end{equation}
 
Since $Q_3$ is positive definite by its very definition, then we have that $\mu>0$ and $2\mu+3\lambda>0$. In turn, it holds that $\beta>-1/2$ and hence that $Q_2$ is positive definite. 
This fact guarantees in particular
that the quantities appearing in the statement
of Lemma \ref{minimizers_x'} below are well defined.

Note that in the case when $\overline{A}$ is constant in $\omega$, 
pointwise minimizers of $\mathcal{E}^0$ always exist.
More precisely, 
as noticed in \cite{Sch071} and \cite{Sch072}
(see Lemma \ref{min_A_const} below), 
any minimizer $y$ of $\mathcal E^0$
with $\overline{A}$ constant is characterized by 
the property
${\rm A}_y(x')\equiv{\rm const.}\in \mathcal{N}$ 
for a.e. $x'\in \omega$, where
\[
\mathcal N
\,:=\,
\underset{F\in \mathcal{F}}{\rm argmin}\,Q_2\big(F-\overline{A}\big).
\]
Clearly, in the case of nonconstant $\overline{A}$, 
this is not always true.
Now, 
while the analysis of the \emph{minimizers} of 
$\mathcal E^0$, 
with an arbitrary nonconstant $\overline{A}$, 
is behind the scope of the present paper,
it is natural in our context 
to try to understand 
under which conditions
the existence of 
\emph{pointwise minimizers} of $\mathcal{E}^0$
is guaranteed. 
In Subsection \ref{subsec_min} we answer this question in the case when $\overline{A}$ is piecewise constant.  
To do this, 
we need a structure result for
the set $\mathcal N$ in the case of constant
$\overline A$. This is the content of the following
lemma.

\begin{lemma}
\label{minimizers_x'} 
Let $a$ and $b$ be two real numbers and
let $\beta$ be given by \eqref{beta}.
The following implications hold:
\noindent
\begin{itemize}
\item [$(i)$] If 
$\,\overline{A}=\left(\begin{matrix}
a & 0 \\ 0 & a
\end{matrix}\right)\quad\,\mbox{ then }\quad \mathcal{N}=\left\{\rho^{\trsp}\left(\begin{matrix}
\mathfrak{r} & 0 \\ 0 & 0
\end{matrix}\right)\rho\,:\,\rho\in \SO(2)\right\}\,
\mbox{with}\ \,\mathfrak{r}=a\frac{1+2\beta}{1+\beta}.$
\item [$(ii)$] If  $\, \overline{A}=\left(\begin{matrix}
a & 0 \\ 0 & -a
\end{matrix} \right)\quad\,\mbox{ then }\quad \mathcal{N}=\left\{\left(\begin{matrix}
\mathfrak{r} & 0 \\ 0 & 0
\end{matrix}\right), \left(\begin{matrix}
0 & 0 \\ 0 & -\mathfrak{r}
\end{matrix}\right)\right\}\,
\mbox{with}\ \,
\mathfrak{r}=\frac{a}{1+\beta}.$
\item [$(iii)$] If $\, \overline{A}=\left(\begin{matrix}
a & 0 \\ 0 & b
\end{matrix} \right),\,|a|>|b|\quad\mbox{ then }\quad\mathcal{N}=\left\{\left(\begin{matrix}
\mathfrak{r} & 0 \\ 0 & 0
\end{matrix}\right)\right\}\,
\mbox{with}\ \,
\mathfrak{r}=a+\frac{b\beta}{1+\beta}.$
\item [$(iv)$] If $\, \overline{A}=\left(\begin{matrix}
a & 0 \\ 0 & b
\end{matrix} \right),\, |b|>|a|\quad\mbox{ then }\quad\mathcal{N}=\left\{\left(\begin{matrix}
0 & 0 \\ 0 & \mathfrak{r}
\end{matrix}\right)\right\}\,
\mbox{with}\ \,
\mathfrak{r}=b+\frac{a\beta}{1+\beta}.$
\end{itemize}
\end{lemma}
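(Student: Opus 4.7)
The plan is to parametrize $\mathcal{F}$, reduce the minimization to a two-variable problem, solve the inner minimization in closed form, and then handle the outer one by a case analysis.

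First I would observe that every $F\in\mathcal{F}$ can be written as $F=\mathfrak{r}\,v\otimes v$ for some $\mathfrak{r}\in\R$ and some unit vector $v\in\Sph^1$, since a symmetric $2{\times}2$ matrix with vanishing determinant is either zero or of rank one. Writing $v=(\cos\theta,\sin\theta)$ and $\overline{A}=\mathrm{diag}(a,b)$, a direct computation based on \eqref{Qgel} together with the identity $\cos^4\theta+\sin^4\theta+2\cos^2\theta\sin^2\theta=1$ gives
$$
\frac{1}{2\mu}\,Q_2(F-\overline{A})\,=\,\mathfrak{r}^2-2\mathfrak{r}\,p(\theta)+a^2+b^2+\beta(\mathfrak{r}-s)^2,
$$
where $p(\theta):=a\cos^2\theta+b\sin^2\theta$ and $s:=a+b$.

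Next, for fixed $\theta$ this is a strictly convex quadratic in $\mathfrak{r}$ (its leading coefficient is $1+\beta>1/2$, using $\beta>-1/2$). Solving $\partial_{\mathfrak r}=0$ yields $\mathfrak{r}^*(\theta)=(p(\theta)+\beta s)/(1+\beta)$, and substitution reduces the value at the optimum to
$$
a^2+b^2+\beta s^2\,-\,\frac{(p(\theta)+\beta s)^2}{1+\beta}.
$$
Hence it remains to maximize $(p(\theta)+\beta s)^2$ over $\theta$. Since $p(\theta)$ sweeps the closed interval with endpoints $a$ and $b$, and $(p+\beta s)^2$ is a convex function of $p$, this maximum is attained at an endpoint, i.e.\ for a $\theta$ such that $v$ aligns with one of the coordinate axes.

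Finally I would carry out the four cases. In case $(i)$, $p(\theta)\equiv a$ for every $\theta$, so every direction is optimal and the formula for $\mathfrak r^*$ gives $\mathfrak r=a(1+2\beta)/(1+\beta)$, recovering the $\SO(2)$-family. In case $(ii)$, $s=0$ reduces the problem to maximizing $p^2$, so both endpoints $p=\pm a$ are optimal and produce the two matrices listed. In cases $(iii)$ and $(iv)$ I would compare the two candidates via the factorization
$$
\bigl(a(1+\beta)+\beta b\bigr)^2-\bigl(b(1+\beta)+\beta a\bigr)^2\,=\,(a^2-b^2)(1+2\beta),
$$
and use $1+2\beta>0$ to single out the unique maximizer as the endpoint corresponding to the eigenvalue of larger absolute value; the stated expressions for $\mathfrak r$ then follow by substituting $p=a$ or $p=b$ into $\mathfrak r^*$.

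The main obstacle is bookkeeping rather than conceptual: one must verify that the inner minimization collapses cleanly to $-(p+\beta s)^2/(1+\beta)$ modulo terms independent of $\theta$, and check uniqueness in cases $(iii)$ and $(iv)$. For the latter it is not enough that the endpoint strictly maximizes $(p+\beta s)^2$ on the interval; one must also observe that this endpoint is attained by exactly one rank-one symmetric tensor, since $v\otimes v=(-v)\otimes(-v)$ identifies antipodal directions on $\Sph^1$ and no other unit vector realizes the extremal value of $p$.
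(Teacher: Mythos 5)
Your proof is correct, but it takes a genuinely different route from the paper's. The paper parametrizes a rank-deficient symmetric matrix by its entries, $F=\bigl(\begin{smallmatrix}\xi & \zeta\\ \zeta & \upsilon\end{smallmatrix}\bigr)$ with $\zeta^2=\xi\upsilon$, and thus ends up minimizing a quadratic $f(\xi,\upsilon)$ on the region $P=\{\xi\upsilon\ge 0\}$, which forces a case split between boundary extrema (when $a\neq b$) and a separate analysis of interior stationary points (when $a=b$). You instead use the spectral parametrization $F=\mathfrak r\,v\otimes v$, which builds in the $\det F=0$ constraint from the outset, eliminate $\mathfrak r$ by a one-variable quadratic minimization (valid since $1+\beta>1/2>0$), and are left with maximizing $\big(p(\theta)+\beta s\big)^2$ over $\theta$, a convex function of $p(\theta)\in[\min(a,b),\max(a,b)]$. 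Your approach is more uniform: the $\SO(2)$-orbit in case $(i)$ appears automatically because $p(\theta)\equiv a$, the two-point answer in case $(ii)$ because $s=0$ makes both endpoints tie, and uniqueness in $(iii)$--$(iv)$ follows from the factorization $(a+\beta s)^2-(b+\beta s)^2=(a^2-b^2)(1+2\beta)$ together with $1+2\beta>0$ and your observation that each interval endpoint of $p$ is hit by a unique rank-one tensor (antipodal $v$ identified). One minor remark: the redundancy of the parametrization at $\mathfrak r=0$ is harmless, and it is worth noting (as you implicitly do) that $\mathfrak r^*\neq 0$ in cases $(iii)$ and $(iv)$ under the stated hypotheses; a quick check using $\beta>-1/2$ shows that $a+\tfrac{b\beta}{1+\beta}=0$ would force $|a|<|b|$, so no degeneracy arises. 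Overall, your proof buys a cleaner geometric picture and avoids the interior-vs-boundary bookkeeping at the cost of introducing the angle variable.
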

Before giving the proof of the above statement, 
let us make a couple of comments.
First, note that the lemma,
though restricted to the case of 
$\overline A$ diagonal,  
covers all the interesting cases, 
from the simple observation that,
with abuse of notation,
$\mathcal N_{\overline A}
=
\bar\rho\,\mathcal N_{\overline D}\,\bar\rho^{\trsp}$,
where $\bar\rho\in{\rm Orth}(2)$
is such that 
$\bar\rho^{\trsp}\,
\overline A\,\bar\rho$
coincides with the diagonal matrix 
$\overline D$.
Moreover, interpreting the elements
of $\mathcal N$ as second fundamental
forms of \emph{cylinders} 
(see the discussion below),
the parameter $\mathfrak r$,
when different from zero, 
corresponds
to the nonzero principal curvature.
In this case, 
observe also that, 
with abuse of notation,
the set $\mathcal N_{(ii)}$ 
is never a subset of $\mathcal N_{(i)}$
and that, as for the (two)
elements of $\mathcal N_{(ii)}$,
the elements of $\mathcal N_{(i)}$
are pairwise linearly independent.
This can be easily read off from
the simple fact that 
$$
\mathcal N_{(i)}
\,=\,
\mathfrak r\,\big\{n\otimes n:\, n\in \mathbb{R}^2\ \mbox{with}\ |n|=1\big\}.
$$
Finally, 
the set of the directions
corresponding to $\pm\mathfrak r$
in the cases 
$(i)$, $(ii)$, $(iii)$, and $(iv)$
is given by 
$
\{\rho\,\mathsf e_1:
\rho\!\in\!\SO(2)\},
\{\mathsf e_1,\mathsf e_2\},
\{\mathsf e_1\},
\{\mathsf e_2\},
$
respectively.
This fact can be interpreted saying that,
in order to reduce the energy, 
while in case $(i)$ rolling up along all
the possible directions is equally favorable, 
in the remaining cases the system 
rolls up along the direction 
corresponding to the greater
(in modulus) eigenvalue of
the target curvature tensor 
$\overline{A}$.
\begin{proof}
[Proof of Lemma \ref{minimizers_x'}] 
 
Let $a, b\in \mathbb{R}$ and let $\overline{A}={\rm diag}(a,b)$. By representing any $F\in \Sym(2)$ by $\left(\begin{smallmatrix}\xi & \zeta\\
\zeta & \upsilon\end{smallmatrix}\right)$, $\zeta,\xi,\upsilon\in \R$ and recalling that $Q_2$ is of the form \eqref{Qgel}, the minimization problem 
to be solved is:
\[\begin{aligned}
\min_{F\in\mathcal{F}}\left\{\big|F- \overline{A}\big|^2+\beta\,{\rm tr}^2\big(F- \overline{A}\big)\right\}
=&\min_{\substack{(\xi,\upsilon)\in \mathbb{R}^2,\,\zeta\in \mathbb{R} \\ \xi\upsilon=\zeta^2}}\left\{\bigg|\left(\begin{matrix}
\xi-a & \zeta \\ \zeta & \upsilon-b
\end{matrix} \right)\bigg|^2+\beta\, {\rm tr}^2\left(\begin{matrix}
\xi-a & \zeta \\ \zeta & \upsilon-b
\end{matrix} \right)\right\}
\end{aligned}
\] 
Denote $P:=\big\{(\xi, \upsilon)\in \mathbb{R}^2\big|\, \xi\upsilon\geq 0\big\}$ and define for every $(\xi,\upsilon)\in P$ the function
\[
f(\xi,\upsilon):=(1+\beta)(\xi+\upsilon)^2-2\big(a (1+\beta)+b\beta\big)\xi-2\big( b(1+\beta)+a\beta\big)\upsilon+a^2+b^2+\beta(a+b)^2,
\]
so that the minimization problem becomes  
$\min_{(\xi,\upsilon)\in P}f(\xi,\upsilon).$ 
In the case
when $a\neq b$, $f$ attains its minimum on $\partial P=\big\{(\xi,\upsilon)\in \mathbb{R}^2\,|\, \xi\upsilon=0\big\}$. With this said, $(ii)$, $(iii)$ and $(iv)$ easily follow by straightforward computations.
To prove $(i)$, we first note that the
set of stationary points of $f$ in ${\rm int}(P)$ 
is given by
\[
\left\{\Big(\eta_{\zeta}^{\pm},\eta_{\zeta}^{\mp}\Big)\in \mathbb{R}^2:\,\zeta\in \left[-\frac{|\mathfrak{r}|}{2},\frac{|\mathfrak{r}|}{2}\right]\setminus\{0\}\right\}, 
\]
where
\[\mathfrak{r}=\frac{a(1+2\beta)}{(1+\beta)}
\quad\mbox{and}\quad\eta^{\pm}_{\zeta}:=\frac{\mathfrak{r}}{2}\pm\frac{\sqrt{\mathfrak{r}^2-4\zeta^2}}{2},\qquad \mbox{ for every }\zeta\in \left[-\frac{|\mathfrak{r}|}{2},\frac{|\mathfrak{r}|}{2}\right]\setminus\{0\}.\]
Moreover, the value of
$f$ at these stationary points coincides with the value of $f$ at the boundary of $P$. 
In turn, 
\[
\mathcal N\,=\,\left\{
\left(
\begin{matrix}
\eta_{\zeta}^{\pm} & \zeta\\
\zeta & \eta_{\zeta}^{\mp}
\end{matrix}
\right):\, |\zeta|\leq \frac{|\mathfrak{r}|}{2}\right\}=\left\{\rho^{\trsp}\left(\begin{matrix}
\mathfrak{r} & 0 \\ 0 & 0
\end{matrix}\right)\rho\,:\,\rho\in \SO(2)\right\},\]
concluding the result of point $(i)$, and thus proving the lemma.
 
\end{proof}

To conclude the section, we give
some definitions which will be useful later on.
They regard the 
sub-class of $\Wiso(\omega)$
consisting of \emph{cylinders}.
Given $r\in (0,+\infty]$, we define the map $C_{r}:\mathbb{R}^2\to \mathbb{R}^3$ as
\[
C_{r}(x')\,:=\,\left\{\begin{aligned}&\Big(r \big(\cos(x_1/r)-1\big),r \sin(x_1/r), x_2\Big)^{\trsp},\ && r\in (0,+\infty),\\
&\big(0,x_1,x_2\big)^{\trsp},\ && r=+\infty,
\end{aligned}\right.
\]
for every $x'=(x_1,x_2)\in \mathbb{R}^2$.
Then we define the family of maps 
\begin{equation}\label{family_of_cyliders}
{\rm Cyl}\,:=\,\big\{T_v\circ R\circ C_{r}\circ \rho:\mathbb{R}^2\to \mathbb{R}^3\,\big|\, r\in (0,+\infty],\, T_v\in {\rm Trs}(3),\, R\in \SO(3)\mbox{ and }\rho\in {\rm Orth}(2)\big\}
\end{equation}
and we call its elements \emph{cylinders}. Note that the above defined family of cylinders includes also \emph{planes} - the elements of ${\rm Cyl}$ with $r=+\infty$. 
\begin{remark}
{\rm
Observe that any cylinder $y=T_v\circ R\circ C_r\circ \rho$ maps lines parallel to $\rho^{\trsp}\mathsf e_2$ to the lines of zero curvature - rulings. 
More in general, direct computations give
\begin{equation}\label{gradient_cylinder}
\nabla y(x')\,=\, R\,\nabla C_{r}\big(\rho(x')\big)\,\rho=R\,\left(\begin{array}{cc}
-\sin\left(\frac{ x'\cdot \rho^{\trsp}\mathsf{e}_1}{r}\right) & 0 \\ \cos\left(\frac{x'\cdot \rho^{\trsp}\mathsf{e}_1}{r}\right) & 0 \\ 0 & 1
\end{array}\right)\,\rho,\qquad\mbox{ for all } x'\in \R^2,
\end{equation}
so that 
\[\nabla y(\lambda\rho^{\trsp}\mathsf e_2)=R \left(\begin{array}{cc}
0 & 0 \\ 1 & 0 \\ 0 & 1
\end{array}\right)\rho,
\qquad\quad\mbox{for every }\lambda\in\R.
\]
} 
\end{remark}

By direct computations 
one can see that a map 
$y=T\circ R\,\circ\,C_r\circ \rho\in {\rm Cyl}$
is an isometry
whose second fundamental form
is given by
\begin{equation}
\label{cyl}
{\rm A}_y(x')\,= \,({\rm det}\rho)\,\rho^{\trsp}\left(\begin{matrix}
\frac{1}{r} & 0\\
0 & 0
\end{matrix}\right)\rho,\quad\quad\mbox{ for every }x'\in \mathbb{R}^2.
\end{equation}
Now, let us go back to the set
$\mathcal F$ defined in 
\eqref{sec_fund_form_cyl}.
From \eqref{cyl} and from the simple observation that
$\mathcal{F}$ can be equivalently represented as
$$\mathcal{F}
\,=\,
\mathbb{R}\,\big\{n\otimes n:\, n\in \mathbb{R}^2\ \mbox{with}\ |n|=1\big\}
\,=\,
\mathbb R\,
\left\{\rho^{\trsp}\left(\begin{matrix}
1 & 0 \\ 0 & 0
\end{matrix}\right)\rho\,:\,\rho\in \SO(2)\right\},
$$
one can prove that
the set $\mathcal{F}$
coincides with the set of 
(constant) second fundamental forms 
of cylinders.
This fact can in turn be used to show,
in the case where the target curvature tensor
$\overline A$ is constant,
that
  
\begin{equation}\label{set_minimizers}
y\in\Wiso(\omega)\text{ is a minimizer of } \mathcal E^0\text{ if and only if }
y\text{ is a pointwise minimizer.}
\end{equation}
 
This is the first step
of the proof of Lemma \ref{min_A_const}
below. The second part of the proof
consists then in showing that
\begin{equation}\label{sff_consant}
{\rm A}_y(x')\in\mathcal N
\quad\mbox{for a.e.}\ x'\in\omega
\quad
\Longrightarrow
\quad
{\rm A}_y\equiv{\rm const.}.
\end{equation}
This property is at the core of
our investigations in the following subsection
and can be proved using
some fine properties
of isometric immersions
  
(\cite{Hor111}, \cite{Hor112} and \cite{Pak04}). The proof of the following lemma can be found in \cite[Proposition 4.2]{Sch072}. 

\begin{lemma}\label{min_A_const}
Let $\overline A$ be constant 
(cfr. \eqref{energia_limite}--\eqref{def:target_curvature})
and let
$y\in \Wiso(\omega)$ be a minimizer of $\mathcal{E}^0$. Then $y=v|_{\omega}$ for some $v\in {\rm Cyl}$. 
In particular, $y$ has constant second fundamental form.
\end{lemma}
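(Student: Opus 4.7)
The plan is to proceed in three steps, corresponding to the reductions \eqref{set_minimizers}, \eqref{sff_consant}, and a final rigidity step for isometries with constant second fundamental form.

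First, I would show that any minimizer $y$ of $\mathcal{E}^0$ is in fact a \emph{pointwise} minimizer, i.e.\ ${\rm A}_y(x') \in \mathcal{N}$ for a.e.\ $x' \in \omega$. Since $Q_2$ is positive definite on $\Sym(2)$, one has the pointwise lower bound $Q_2({\rm A}_y(x') - \overline{A}) \geq \min_{F \in \mathcal{F}} Q_2(F - \overline{A})$, which, integrated, gives the a priori lower bound on $\mathcal{E}^0$. It remains to exhibit a $y \in \Wiso(\omega)$ that attains it: given any $F_0 \in \mathcal{N}$, write $F_0 = \rho^{\trsp}\,{\rm diag}(\mathfrak{r},0)\,\rho$ with $\rho \in \SO(2)$ and observe, via \eqref{cyl}, that the cylinder $v := C_{1/\mathfrak{r}} \circ \rho$ (a plane when $\mathfrak{r}=0$) belongs to ${\rm Cyl}$ and satisfies ${\rm A}_v \equiv F_0$. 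Hence $v|_{\omega}$ achieves the lower bound, so the infimum is attained; as $\overline{A}$ is constant and $Q_2$ is strictly convex on $\Sym(2)$, every minimizer necessarily satisfies ${\rm A}_y(x') \in \mathcal{N}$ a.e.

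Second, I would prove that any measurable $\mathcal{N}$-valued field ${\rm A}_y$ is actually constant. In cases $(iii)$ and $(iv)$ of Lemma \ref{minimizers_x'} the set $\mathcal{N}$ is a singleton and there is nothing to prove. For cases $(i)$ and $(ii)$ one cannot avoid the structure theory of $W^{2,2}$-isometric immersions of planar domains into $\mathbb{R}^3$ developed in \cite{Pak04, Hor111, Hor112}: on the set $\{{\rm A}_y \neq 0\}$ the domain $\omega$ is foliated by straight asymptotic segments along which ${\rm A}_y$ and the normal $\nu$ are constant, and the kernel direction of ${\rm A}_y$ extends to a locally Lipschitz vector field. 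In case $(ii)$, ${\rm A}_y$ would take only the values ${\rm diag}(\mathfrak{r},0)$ and ${\rm diag}(0,-\mathfrak{r})$, whose kernels $\mathsf{e}_2$ and $\mathsf{e}_1$ are transverse, so the continuity provided by the foliation together with the connectedness of $\omega$ forces only one of the two values to occur throughout. In case $(i)$, the principal curvature $|{\rm A}_y| = |\mathfrak{r}|$ is globally constant and, for $\mathfrak{r} \neq 0$, bounded away from zero; the asymptotic lines therefore cannot meet inside $\omega$ without producing a curvature blow-up, so they form a parallel family and the kernel direction $n(x')$ is globally constant, making ${\rm A}_y$ constant.

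Third, once ${\rm A}_y \equiv F_0$ is established to be constant, I would apply the classical rigidity for $W^{2,2}$-isometric immersions with constant rank-one (or zero) second fundamental form: up to composition with a rigid motion of $\mathbb{R}^3$ and an orthogonal change of coordinates in $\mathbb{R}^2$, the only such immersion is $C_{1/\mathfrak{r}}$ (or a plane if $\mathfrak{r}=0$). Hence $y = v|_{\omega}$ for some $v \in {\rm Cyl}$, as required. The hard part of the plan is Step 2, since promoting a pointwise selection in $\mathcal{N}$ to a globally constant value cannot be done by soft measure-theoretic arguments alone and genuinely relies on the developable structure of $W^{2,2}$-isometric immersions; a detailed implementation of this program is carried out in \cite[Proposition 4.2]{Sch072}.
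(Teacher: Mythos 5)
Your proof is correct and follows essentially the same route as the paper, which for this lemma simply defers to \cite[Proposition 4.2]{Sch072}: the paper's \eqref{set_minimizers} corresponds to your Step~1, \eqref{sff_consant} to your Step~2 (with exactly the references \cite{Pak04,Hor111,Hor112} the paper invokes), and the rigidity for constant ${\rm A}_y$ to your Step~3. Two minor points of precision: strict convexity of $Q_2$ is not actually needed in Step~1, since equality of the integrals together with the pointwise lower bound already forces ${\rm A}_y\in\mathcal N$ a.e., and when $\mathfrak r<0$ the comparison cylinder should be taken with radius $1/|\mathfrak r|$ and $\rho\in{\rm Orth}(2)$ of determinant $-1$, cf.\ \eqref{cyl}.
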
 
\subsection[The case of piecewise constant \texorpdfstring{$\overline{A}$}{Abar}.]{The case of piecewise constant \texorpdfstring{$\overline{A}$}{Abar}.}\label{subsec_min}
In this subsection, 
we consider the case where
the target curvature is  
a piecewise constant tensor valued map $x'\mapsto\overline{A}(x')$.
More precisely, given $n\in \N$, $n\geq 2$, we say that the map $\overline{A}\in \LL^{\infty}\big(\omega,\mathbb{R}^{2\times 2}\big)$ is \emph{piecewise constant} if it is of the form  
\begin{equation}\label{Abar}
\overline{A}\,=\,\sum_{k=1}^n\overline{A}_k\,\chi_{\omega_k}\quad\mbox{ a.e.\ in }  \omega,\qquad\mbox{ with }\, \overline{A}_k=\left(\begin{array}{cc}
a_k & 0 \\ 0 & b_k
\end{array}\right),\quad a_k, b_k\in \mathbb{R},
\end{equation}
where $\{\omega_k\}_{k=1}^n$ is a partition of $\omega$ made of Lipschitz subdomains $\omega_k$  according to Definition \ref{subdivision} .
Clearly, it is convenient distinguishing 
between two different neighboring subdomain only when the corresponding 
spontaneous curvature are different from each other. Namely, we suppose
that $\overline A_k\neq\overline A_j$
for every $k\neq j$ such that 
$\partial\omega_j\cap
\partial\omega_k\neq\emptyset$. 
With such target curvature,
our 2D energy functional takes the form
$$\mathcal{E}^0(y)\,=\,\frac{1}{24}\sum_{k=1}^n\int_{\omega_k}Q_2\big({\rm A}_y(x')-\overline{A}_k\big)\,\rmd x'+{\rm ad.t.},\qquad\mbox{ for every } y\in \Wiso(\omega).$$
We want to determine the conditions 
the map
$x'\mapsto\overline{A}(x')$
has to satisfy in order to guarantee the existence of pointwise minimizers of 
$\mathcal{E}^0$, i.e. to guarantee that
there exists 
$y\in \Wiso(\omega)$
such that 
${\rm A}_y(x')\in 
\mathcal{N}(x')$
for a.e. $x'\in \omega$,
where 
$\mathcal N(x')$
is 
defined 
by \eqref{setN}.
In view of \eqref{Abar},
we equivalently 
look for the necessary and sufficient conditions 
such that 
\begin{equation}\label{y_pointwise} 
\mbox{there exists }
y\in \Wiso(\omega)
\mbox{ such that }
{\rm A}_y(x')\in \mathcal N_k\mbox{ for a.e.\ }x'\in \omega_k,
\mbox{ for all }k=1,\ldots,n,
\end{equation}
where
\begin{equation}\label{Nk}
\mathcal{N}_k\,:=\,{\rm argmin}_{F\in \mathcal{F}}\,Q_2\big(F-\overline{A}_k\big),\qquad \mbox{ for every }k=1,\ldots,n.
\end{equation}
Note from \eqref{sff_consant}
that a deformation satisfying 
\eqref{y_pointwise}
is, roughly speaking,
a ``patchwork" of cylinders.
Therefore,
conditions on 
$\overline A$ guaranteeing
\eqref{y_pointwise}
translates into
conditions
under which cylinders can be patched together resulting into an isometry. 
 This is the content of the main result of the present section, namely of Theorem \ref{general_cond} below. In order to state and prove it, we need a definition and a preliminary lemma. 

\begin{defin}[Lipschitz \texorpdfstring{$n\text{-}$}{n}subdivision]\label{subdivision} 
Fix $n\!\in\!\mathbb{N}$, $n\geq 2$. A family $\{\omega_k\}_{k=1}^n$ of open, bounded and connected subsets of $\mathbb{R}^2$ is said to be a \emph{Lipschitz $n\text{-}$subdivision} of $\omega$ provided it can be obtained \emph{via} 
the following procedure: 
\begin{itemize}
\item Call $\omega_1':=\omega$.
\item 
Suppose that for every  $k=1,\ldots,n-1$ there exists 
a continuous injective curve $\gamma_k:[0,1]\to {\rm cl}(\omega_k')$ such that $\partial\omega_k'\cap [\gamma_k]=\big\{\gamma_k(0),\gamma_k(1)\big\}$ (note that $\gamma_k(0)\neq\gamma_k(1)$) and the two connected components of $\omega_k'\setminus [\gamma_k]$ are Lipschitz. 
Then call 
$\omega_{k+1}'$ one of such connected components.
\item Once the domains $\omega_1',\ldots,\omega_n'$ are defined, let $\omega_k:=\omega_k'\setminus {\rm cl}(\omega_{k+1}')$ for every $k=1,\ldots,n-1$ and let $\omega_n:=\omega_n'$. 
\end{itemize}
In particular, 
the subdomains 
$\omega_1,\ldots,\omega_n$ 
of $\omega$ are Lipschitz domains
such that
$$\omega=\bigcup\limits_{k=1}^n\omega_k\,\cup \,\bigcup\limits_{k=1}^{n-1}\gamma_k\big((0,1)\big).$$
\end{defin}
\begin{remark}
{\rm
Since each $\omega_k$ is a Lipschitz domain, one has that its boundary $\partial\omega_k$ has null $\mathcal{L}^2$-measure. In particular, we deduce that $\mathcal{L}^2\big(\omega\setminus\bigcup_{k=1}^n\omega_k\big)=0$. 
}
\end{remark} 
 The following Lemma \ref{iso_piecewise}
will be the main ingredient for the proof of Theorem \ref{general_cond}. 
It gives a ``recipe" on 
how two 
cylinders can be patched together.
We refer to Remark \ref{trsrot} below for the notation and the properties of roto-translations used in this section. 
We point out that the proof of Theorem \ref{general_cond} will be achieved via an induction argument, which relies upon Lemma \ref{iso_piecewise}
and the definition of Lipschitz subdivision of $\omega$.

We remark that, more in general, the very fact that $y$ 
is a $\WW^{2,2}$-isometry is sufficient to deduce that
$\omega$ consists, up to a null set, 
of finitely many subdomains (touching each other on a finite union of line segments) on which $y$ is either a plane, or a cylinder, 
or a cone or ``tangent developable".
This description can be obtained
as a consequence of some fine properties of the 
class $\Wiso(\omega)$ - see \cite{Pak04} for $\omega$ convex and \cite{Hor111}, \cite{Hor112} for a more general 
$\omega$. 
%
%
\begin{lemma}
\label{iso_piecewise}
 Let $\gamma:[0,1]\to {\rm cl}(\omega)$ be a continuous injective curve such that $[\gamma]\cap \partial\omega=\{\gamma(0), \gamma(1)\}$ and such that two connected components $\omega_1$ and $\omega_2$ of $\omega\setminus [\gamma]$ are Lipschitz. 
Let $y_1,y_2\in {\rm Cyl}$, say 
$y_1=T_{v_1}\circ R_1\circ C_{r_1}\circ \rho_1$ and 
$y_2=T_{v_2}\circ R_2\circ C_{r_2}\circ \rho_2$,
with $r_1,r_2\in (0,+\infty)$ 
such that ${\rm det}\rho_1=-{\rm det}\rho_2$ whenever $r_1=r_2$.
The map defined as
\[
y
\,:=\,
y_1\chi_{\omega_1}+y_2\chi_{\omega_2},
\ \qquad\mbox{ a.e.\ in }\omega,
\]  
belongs to $\Wiso(\omega)$ if and only if 
the following conditions hold:
\begin{itemize} 

\item [$(i)$] $[\gamma]$ is a line segment
spanned by some $\mathsf e\in\R^2\setminus\{0\}$
;

\item [$(ii)$] 
$\rho^{\trsp}_1\mathsf{e}_2$ and $\rho^{\trsp}_2\mathsf{e}_2$ 
are parallel to $\mathsf e$. 
This in particular implies
that $\rho_1\rho_2^{\trsp}=
{\rm diag}(\sigma_1,\sigma_2)$,
for some $\sigma_1,\sigma_2\in\{\pm1\}$;
 
\item [$(iii)$]
Setting $w_k:=\rho_k\big(\gamma(0)-(0,0)\big)$
and $\theta_k:=(w_k\cdot\mathsf e_1)/r_k$,
for $k=1,2$, we have
\begin{equation}
\label{caso_generale}
\big(R_1\hat R_{\theta_1}\big)^{\trsp}
\big(R_2\hat R_{\theta_2}\big)
=\,
{\rm diag}
\big(\sigma_1\,\sigma_2,\sigma_1,\sigma_2\big)
\quad\mbox{and}\quad
v_1+R_1C_{r_1}(w_1)
=\,
v_2+R_2C_{r_2}(w_2).
\end{equation}
\end{itemize}
\end{lemma}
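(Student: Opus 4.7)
The plan is to reduce everything to the compatibility of $y$ and $\nabla y$ across $\gamma$. Since each $y_k$ is smooth and isometric on $\omega_k$, a standard trace argument shows that $y\in\Wiso(\omega)$ is equivalent to the two interface identities
\[
y_1|_\gamma = y_2|_\gamma \qquad\text{and}\qquad \nabla y_1|_\gamma = \nabla y_2|_\gamma,
\]
the first being needed for $y\in W^{1,2}(\omega,\R^3)$ and the second to rule out an $\mathcal{H}^1$-supported singular part of the distributional Hessian, which would violate $y\in W^{2,2}$; isometry on each piece is built in. Hence the entire task reduces to translating these two matching conditions into the algebraic relations (i)--(iii).

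For the necessity direction, I would use \eqref{gradient_cylinder} to write
\[
\nabla y_k(x') \,=\, R_k\,\hat R_{\phi_k(x')}\,J\,\rho_k, \qquad J:=\left(\begin{smallmatrix}0&0\\1&0\\0&1\end{smallmatrix}\right),
\]
where $\phi_k(x'):=(\rho_k x'\cdot \mathsf{e}_1)/r_k$ and $\hat R_\theta\in\SO(3)$ is the rotation by $\theta$ about the $\mathsf{f}_3$-axis. Imposing $\nabla y_1=\nabla y_2$ along $\gamma$ yields, after left-multiplication by $(R_1\hat R_{\phi_1})^\trsp$ and right-multiplication by $\rho_2^\trsp$, an identity of the form
\[
J\,(\rho_1\rho_2^\trsp) \,=\, \hat R_{-\phi_1(\gamma(t))}\,R_1^\trsp R_2\,\hat R_{\phi_2(\gamma(t))}\,J, \qquad t\in[0,1],
\]
whose left-hand side is $t$-independent. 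Reading off the column acting on $\mathsf{f}_3$ (which is fixed by every $\hat R_\theta$) shows that either $\phi_1$ is constant on $\gamma$, or $R_1^\trsp R_2\mathsf{f}_3=\sigma\mathsf{f}_3$ for some $\sigma\in\{\pm1\}$. In the latter scenario the column acting on $\mathsf{f}_2$ forces the affine function $\phi_2\pm\phi_1$ to be constant along $\gamma$, and a short case analysis that invokes the hypothesis $\det\rho_1=-\det\rho_2$ whenever $r_1=r_2$ then forces both $\phi_k$ to be individually constant on $\gamma$. Since each $\phi_k$ is affine, the connected injective curve $\gamma$ must lie in a common level set of $\phi_1$ and $\phi_2$, i.e., in a line parallel to $\rho_k^\trsp\mathsf{e}_2$ for $k=1,2$; this is exactly (i) and (ii). Letting $\theta_k$ be the constant value of $\phi_k$ on $\gamma$, the gradient identity reduces to the rotation equality in (iii), and evaluating $y_1=y_2$ at $x'=\gamma(0)$ gives the translation equality.

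For the sufficiency direction, assume (i)--(iii) and parameterize $[\gamma]$ as $\gamma(0)+s\hat{\mathsf e}$, with $\hat{\mathsf e}$ a unit vector spanning $[\gamma]$. Condition (ii) gives $\hat{\mathsf e}\parallel\rho_k^\trsp\mathsf{e}_2$, hence $\phi_k\equiv\theta_k$ on $\gamma$; a direct computation then shows
\[
y_k(\gamma(0)+s\hat{\mathsf e}) \,=\, v_k+R_k C_{r_k}(w_k)+s\,\epsilon_k\,R_k\mathsf{f}_3, \qquad \epsilon_k:=(\rho_k\hat{\mathsf e})_2\in\{\pm1\}.
\]
The translation equality in (iii) handles the values at $s=0$, while the rotation equality in (iii), combined with the identity $\epsilon_1=\sigma_2\epsilon_2$ derived from $\rho_1\rho_2^\trsp=\mathrm{diag}(\sigma_1,\sigma_2)$, gives $\epsilon_1 R_1\mathsf{f}_3=\epsilon_2 R_2\mathsf{f}_3$, so $y_1=y_2$ throughout $\gamma$. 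A parallel manipulation yields $\nabla y_1=\nabla y_2$ on $\gamma$. Piecewise smoothness, together with matching of the zeroth and first traces across the Lipschitz interface $\gamma$, implies $y\in W^{2,2}(\omega,\R^3)$, and therefore $y\in\Wiso(\omega)$. The main technical hurdle is the case analysis in the necessity step: one must exclude the possibility that $\phi_1$ and $\phi_2$ vary nontrivially in a coordinated way along $\gamma$ -- for instance when $r_1=r_2$ and the first rows of $\rho_1,\rho_2$ coincide, so that $\phi_1\equiv\phi_2$ on $\omega$ irrespective of $\gamma$. The non-degeneracy hypothesis $\det\rho_1=-\det\rho_2$ whenever $r_1=r_2$ is precisely what excludes such degenerate configurations and forces the interface to be a common ruling of the two cylinders.
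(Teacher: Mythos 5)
The proposal is correct and follows essentially the same route as the paper: reduce the problem to matching $y$ and $\nabla y$ across $\gamma$, exploit the $t$-independence of $J\rho_1\rho_2^\trsp$ to force the angle functions $\phi_k$ to be constant on $\gamma$ (hence $[\gamma]$ a ruling of both cylinders), and close the necessity argument with the $r_1\neq r_2$ versus $r_1=r_2$ case distinction using the hypothesis $\det\rho_1=-\det\rho_2$. The only cosmetic differences are that you use a trace argument in place of the paper's appeal to the M\"uller--Pakzad $C^1$-regularity result (\cite{MP05}) for $W^{2,2}$ isometries, and that you organize the matrix computations more compactly via the rotation $\hat R_\theta$ rather than the paper's explicit entrywise identities \eqref{Rrho}--\eqref{gamma_e2_derivation}.
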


\begin{proof}

\textbf{(Necessity)}
Here, we show that if the deformation
$y:=
y_1\chi_{\omega_1}+y_2\chi_{\omega_2}$ is in $\Wiso(\omega)$, then it complies with conditions $(i)$, $(ii)$ and $(iii)$.
First of all, 
we recall from 
\cite[Proposition 5]{MP05} that
the very condition $\Wiso(\omega)$
implies 
$y\in C^1(\omega,
\mathbb{R}^3)$.
At the same time, 
from the specific expression of $y$
we have that
$\nabla y=\nabla y_k$ in $\omega_k$
for $k=1,2$, where
\begin{equation}
\label{expr_grad}
\nabla y_k
\,=\,
R_k
\left(\begin{array}{cc}
-\sin\left(\frac{ x'\cdot \rho_k^{\trsp}\mathsf{e}_1}{r_k}\right) & 0 \\ \cos\left(\frac{ x'\cdot \rho_k^{\trsp}\mathsf{e}_1}{r_k}\right) & 0 \\ 0 & 1
\end{array}\right)
\rho_k.
\end{equation}
This expression says in particular that
$\nabla y$ is bounded and in turn
that $y\in C^1(\overline{\omega},
\mathbb{R}^3)$. 
Let us first prove the necessity of the conditions $(i)$, $(ii)$ and $(iii)$ in the case when $\gamma(0)=(0,0)$. The continuity of $y$ and $\nabla y$ at the point $(0,0)$ gives,
respectively,
that $v_1=v_2$ (obtained by imposing $y_1(0,0)=y_2(0,0)$),
and 
\begin{equation}\label{Rrho}
\left(\begin{array}{cc}
0 & 0\\ 1 & 0\\ 0 & 1
\end{array}\right)\rho_1\rho_2^{\trsp}=R_1^{\trsp}R_2 \left(\begin{array}{cc}
0 & 0\\ 1 & 0\\ 0 & 1
\end{array}\right)\;\Leftrightarrow\;R_1^{\trsp}R_2=\left(\begin{array}{cccc}
{\rm det}(\rho_1\rho_2^{\trsp}) &\temp & 0& 0\\ \cline{1-4}
\begin{array}{c}
0\\
0
\end{array} &\temp & \quad \rho_1\rho_2^{\trsp}\\
\end{array}\right)
\end{equation}
(obtained from 
$\nabla y_1(0,0)=\nabla y_2(0,0)$
and from expression \eqref{expr_grad}),  which proves $(iii)$ .
The continuity of $\nabla y$ gives also that
$\nabla  y_1(\gamma(t))=\nabla y_2(\gamma(t))$
for each $t\in[0,1]$, that is 
\[ 
\left(\begin{array}{cc}
-\sin\left(\frac{ \gamma(t)\cdot \rho_1^{\trsp}\mathsf{e}_1}{r_1}\right) & 0 \\ \cos\left(\frac{ \gamma(t)\cdot \rho_1^{\trsp}\mathsf{e}_1}{r_1}\right) & 0 \\ 0 & 1
\end{array}\right)
\rho_1\rho_2^{\trsp}=R_1^{\trsp}R_2
\left(\begin{array}{cc}
-\sin\left(\frac{ \gamma(t)\cdot \rho_2^{\trsp}\mathsf{e}_1}{r_2}\right) & 0 \\ \cos\left(\frac{ \gamma(t)\cdot \rho_2^{\trsp}\mathsf{e}_1}{r_2}\right) & 0 \\ 0 & 1
\end{array}\right).
\]
In turn, using the second condition in \eqref{Rrho}
and the notation
$\rho_1\rho_2^{\trsp}=\left(\begin{matrix}
m_1 & m_2\\
m_3 & m_4
\end{matrix}\right)$,
we have
\renewcommand\arraystretch{1.5}
\begin{equation}\label{nabla_y}
\left(\begin{array}{cc}
-m_1\sin\left(\frac{ \gamma(t)\cdot \rho_1^{\trsp} \mathsf{e}_1}{r_1}\right) & -m_2\sin\left(\frac{\gamma(t)\cdot \rho_1^{\trsp}\mathsf{e}_1}{r_1}\right) \\ m_1\cos\left(\frac{\gamma(t)\cdot \rho_1^{\trsp}\mathsf{e}_1}{r_1}\right) & m_2\cos\left(\frac{\gamma(t)\cdot \rho_1^{\trsp}\mathsf{e}_1}{r_1}\right) \\ m_3 & m_4
\end{array}\right)
=
\left(\begin{array}{cc}
-{\rm det}(\rho_1\rho_2^{\trsp})\sin\left(\frac{\gamma(t)\cdot \rho_2^{\trsp}\mathsf{e}_1}{r_2}\right) & 0 \\ m_1\cos\left(\frac{\gamma(t)\cdot \rho_2^{\trsp}\mathsf{e}_1}{r_2}\right) & m_2\\ m_3 \cos\left(\frac{\gamma(t)\cdot \rho_2^{\trsp}\mathsf{e}_1}{r_2}\right) & m_4
\end{array}\right).
\end{equation}
By the equality between the elements of the first row in the above expression one deduces that $\rho_1^{\trsp}\mathsf{e}_2$ and $\rho^{\trsp}_2\mathsf{e}_2$ must be parallel. This  proves one part of the statement in $(ii)$  and implies, in particular, that $\rho_1\rho_2^{\trsp}={\rm diag}(m_1,m_4)$ with $m_1,m_4\in \{\pm 1\}$.  In order to conclude the proof of $(ii)$ and in the same time prove $(i)$ , we need to show that $[\gamma]$ is a line segment parallel to $\rho_1^{\trsp}\mathsf e_2$ (and to $\rho_2^{\trsp}\mathsf e_2$).
Observe that $\rho_1\rho_2^{\trsp}={\rm diag}(m_1,m_4)$ implies  $\rho_2^{\trsp}\mathsf e_1=m_1\rho_1^{\trsp}\mathsf e_1$ , so that the equation \eqref{nabla_y} simplifies to 
\begin{equation}\label{gamma_e2}
\left(\begin{array}{cc}
-m_1\sin\left(\frac{\gamma(t)\cdot \rho_1^{\trsp}\mathsf{e}_1}{r_1}\right) & 0 \\ m_1\cos\left(\frac{\gamma(t)\cdot \rho_1^{\trsp}\mathsf{e}_1}{r_1}\right) & 0 \\ 0 & m_4
\end{array}\right)=\left(\begin{array}{cc}
-m_4\sin\left(\frac{\gamma(t)\cdot \rho_1^{\trsp}\mathsf{e}_1}{r_2}\right) & 0 \\ m_1\cos\left(\frac{\gamma(t)\cdot \rho_1^{\trsp}\mathsf{e}_1}{r_2}\right) & 0\\ 0 & m_4
\end{array}\right),
\end{equation}
for every $t\in [0,1]$.
By differentiating the above equality
restricted to the first elements of the first and second rows
one gets 
\begin{equation}
\label{gamma_e2_derivation}
\begin{aligned}
m_1\cos\left(\frac{\gamma(t)\cdot 
\rho_1^{\trsp}\mathsf{e}_1}{r_1}\right)\frac{\dot{\gamma}(t)\cdot \rho_1^{\trsp}\mathsf{e}_1}{r_1}
\,&=\,
m_4\cos\left(\frac{\gamma(t)\cdot \rho_1^{\trsp}\mathsf{e}_1}{r_2}\right)\frac{\dot{\gamma}(t)\cdot \rho_1^{\trsp}\mathsf{e}_1}{r_2}\\
\sin\left(\frac{\gamma(t)\cdot 
\rho_1^{\trsp}\mathsf{e}_1}{r_1}\right)\frac{\dot{\gamma}(t)\cdot \rho_1^{\trsp}\mathsf{e}_1}{r_1}
\,&=\,
\sin\left(\frac{\gamma(t)\cdot \rho_1^{\trsp}\mathsf{e}_1}{r_2}\right)\frac{\dot{\gamma}(t)\cdot \rho_1^{\trsp}\mathsf{e}_1}{r_2}
\end{aligned}
\end{equation}
 It turns out that  \eqref{gamma_e2} and  \eqref{gamma_e2_derivation} can be satisfied   only if 
\begin{equation}
\label{gamma_line_segment}
\dot{\gamma}(t)\cdot \big(\rho_1^{\trsp}\mathsf e_1\big)=0,\qquad\mbox{ for every }t\in [0,1],
\end{equation} 
 which implies that $[\gamma]$ is a line segment parallel to $\rho_1^{\trsp}\mathsf e_2$  (thus accordingly also to $\rho_2^{\trsp}\mathsf e_2$).  
To prove previous assertion, we distinguish two cases:
\begin{itemize}
\item if $r_1\neq r_2$,  call $s:=m_1/m_4$ and fix $t\in [0,1]$. Condition \eqref{gamma_e2} grants that $\gamma(t)\cdot \rho_1^{\trsp}{\sf e}_1/r_1$ and $s\gamma(t)\cdot \rho_1^{\trsp}{\sf e}_1/r_2$ have the same sine and cosine. Then, since sine and cosine cannot simultaneously vanish, \eqref{gamma_e2_derivation} yields $\dot{\gamma}(t)\cdot \rho_1^{\trsp}{\sf e}_1/r_1=s\dot{\gamma}(t)\cdot \rho_1^{\trsp}{\sf e}_1/r_2$, whence necessarily $\dot\gamma(t)\cdot \rho_1^{\trsp}{\sf e}_1=0$.
\item if $r_1=r_2$, by hypotheses we have that $\det\rho_1=-\det\rho_2$. Since $\rho_1\rho_2^{\trsp}={\rm diag}(m_1,m_4)$, we conclude that $m_1m_4=-1$, or equivalently $m_1=-m_4$. Now the first condition in \eqref{gamma_e2_derivation} gives 
\[ \frac{d}{dt}\,\sin\left(\frac{\gamma(t)\cdot 
\rho_1^{\trsp}\mathsf{e}_1}{r_1}\right)=\cos\left(\frac{\gamma(t)\cdot 
\rho_1^{\trsp}\mathsf{e}_1}{r_1}\right)\frac{\dot{\gamma}(t)\cdot \rho_1^{\trsp}\mathsf{e}_1}{r_1}=0,\]
so that the map $t\mapsto \gamma(t)\cdot \rho_1^{\trsp}{\sf e}_1/r_1$ is constant and accordingly that $\dot{\gamma}(t)\cdot \rho_1^{\trsp}\mathsf{e}_1=0$ for every $t\in [0,1]$.
\end{itemize}
 
This concludes the proof of the 
necessary condition of the lemma in the case where 
$\gamma(0)=(0,0)$. 

Considering now the case $v:=\gamma(0)-(0,0)\neq0$, 
define $\hat{\omega}:=\omega-v$ and $\hat y_k:=y_k\circ\tau_v$, $k=1,2$ (recall form Section \ref{notation} that $\tau_{v}:=\cdot + v\in {\rm Trs}(2)$). By Remark \ref{trsrot}, one can easily verify that 
\begin{equation}
\hat y_k\,=\,T_{u_k}\circ R_k\circ \hat R_{\theta_k}\circ C_{r_k}\circ \rho_k,
\end{equation}
where 
$\theta_k:=(w_k\cdot\mathsf e_1)/r_k$ and $u_k:=v_k + R_k \circ C_{r_k}(w_k)$, with $w_k:=\rho_k\big(\gamma(0)-(0,0)\big)$, for $k=1,2$.
Observe that  the domain $\hat {\omega}$ is partitioned into $\hat\omega_1$ and $\hat\omega_2$ by the subdivision curve $[\gamma]-v$ which satisfies the condition $\gamma(0)-v=(0,0)$. It is now clear that $y\in \Wiso(\omega)$ implies $\hat y:=y\circ\tau_{v}=\hat y_1\chi_{\hat{\omega}_1}+\hat y_2\chi_{\hat{\omega}_2}\in \Wiso(\hat{\omega})$, which further implies that 
$[\gamma]-v$ (and hence $[\gamma]$) is a line segment parallel to $\rho_1^{\trsp}\mathsf e_2$ and $\rho_2^{\trsp}\mathsf e_2$, implying $\rho_1\rho_2^{\trsp}={\rm diag}(\sigma_1,\sigma_2)$, for some $\sigma_1,\sigma_2\in \{\pm 1\}$, and that 
\[
v_1 + R_1 \circ C_{r_1}(w_1)=v_2+ R_2 \circ C_{r_2}(w_2)\quad\mbox{ and }\quad \big(R_1\hat R_{\theta_1}\big)^{\trsp}
\big(R_2\hat R_{\theta_2}\big)
=\,
{\rm diag}
\big(\sigma_1\,\sigma_2,\sigma_1,\sigma_2\big),
\]
which are precisely conditions $(i)$, $(ii)$ and $(iii)$.

\textbf{(Sufficiency)} 
Let $y_1,y_2\in {\rm Cyl}$ satisfy conditions $(ii)$ and $(iii)$. Let $v:=\gamma(0)-(0,0)$ and let $\rho\in \SO(2)$ be a rotation which brings the line segment $[\gamma]-v$ to the vertical position. Let $\ubar y_k:=y_k\circ \tau_{v}\circ \rho^{\trsp}$. By denoting $u:=v_1+R_1\circ C_{r_1} (w_1)$ and $R:=R_1\circ  \hat R_{\theta_1}$ we have by $(iii)$ that
$\ubar y:=\ubar y_1\chi_{\omega_1}+\ubar y_2\chi_{\omega_2}$ is of the form
\begin{equation}\label{explicit_energy_minimizer}
\ubar {y}(x_1,x_2)=\left\{\begin{aligned}
&T_u R\Big(r_1 \big(\cos(x_1/r_1)-1\big),\sigma^1_1 r_1 \sin(x_1/r_1), \sigma^1_2 x_2\Big)^{\trsp},\ && x_1\leq 0,\\
&T_u R\Big(\sigma_1\sigma_2\, r_2 \big(\cos(x_1/r_2)-1\big),\sigma^1_1 r_2 \sin(x_1/r_2), \sigma^1_2 x_2\Big)^{\trsp},\ &&x_1>0,
\end{aligned}\right.
\end{equation}
where $\sigma_k^1\in \{\pm 1\}$ are such that $\rho_1\rho^{\trsp}={\rm diag}(\sigma_1^1,\sigma_2^1)$ (which follows form the fact that $\rho_1^{\trsp}\mathsf e_2\parallel [\gamma]$).
By construction, 
$\bar y\in C^1(\ubar{\omega}, \mathbb{R}^3)$ with $\ubar \omega=\rho(\omega-v)$.
Simple computations give $\partial_1\ubar y, \partial_2\ubar y\in \WW^{1,2}(\ubar{\omega}, \mathbb{R}^3)$, which implies that $\ubar y\in \WW^{2,2}(\ubar{\omega}, \mathbb{R}^3)$.
Note also that $\nabla \ubar y(x')^{\trsp}\,\nabla \ubar y(x')=\mathbb{I}_3$ for a.e.\ $x'\in \ubar{\omega}$. Therefore $\ubar y\in \Wiso(\ubar{\omega})$, thus accordingly $y:=\ubar y\circ\rho\circ\tau_{-v}\in \Wiso(\omega)$.
\end{proof}
\begin{remark}
\label{iso_piecewise_rmk}
{\rm 
Observe that the condition 
``${\rm det}\rho_1=-{\rm det}\rho_2$ whenever $r_1=r_2$''
permits to exclude the trivial case where
we patch together pieces of  cylinders $y_1$ and $y_2$ having the same curvatures (i.e. $\det \rho_1/r_1=\det\rho_2/r_2$, according to formula \eqref{cyl}). 
Clearly, this case does not force any condition
on $[\gamma]$. 

Moreover, an argument similar to that in the proof 
of Lemma \ref{iso_piecewise}
allows to prove necessary and sufficient conditions
for having $y\in \Wiso(\omega)$ of the form $y=y_1\chi_{\omega_1}+y_2\chi_{\omega_2}$ 
with, say, $y_2$ affine (using our terminology,
a cylinder with $r_2=+\infty$). In this case, condition $(i)$
remains the same
and condition $(ii)$ 
reduces to
$\rho_1^{\trsp}\mathsf e_2\parallel[\gamma]$ (while $\rho_2\in {\rm Orth}(2)$ can be arbitrarily chosen). 
Moreover, for a chosen $\rho_2\in {\rm Orth}(2)$, 
condition $(iii)$ becomes
\[
(R_1 \hat R_{\theta_1})^{\trsp}R_2 \hat R_{\theta_2}=\left(\begin{array}{cccc}
{\rm det}(\rho_1\rho_2^{\trsp}) &\temp & 0& 0\\ \cline{1-4}
\begin{array}{c}
0\\
0
\end{array} &\temp & \quad \rho_1\rho_2^{\trsp}\\
\end{array}\right)\quad\mbox{ and }\quad v_1+R_1C_{r_1}(w_1)
=\,
v_2+R_2C_{r_2}(w_2)
\]
with $w_k:=\rho_k\big(\gamma(0)-(0,0)\big)$ and $\theta_k:=w_k\cdot\mathsf e_1/r_k$.
}
\end{remark}

\begin{remark}[Properties of ``roto-translations"]\label{trsrot}
{\rm 
The following two properties, regarding
the composition of
cylinders, translations and rotations,
can be easily proved.
\noindent
\begin{itemize}
\item [(i)]
Fix $R\in \SO(3)$ and $T_w\in {\rm Trs}(3)$. Then  $R\circ T_w=T_{Rw}\circ R$. 

\item [(ii)]
Let $\tau_v\in {\rm Trs}(2)$
and $\hat R_\theta\in \SO(3)$ be defined by 
\[\hat R_\theta
:=
\left(\begin{array}{ccc}
\cos\theta & -\sin\theta & 0\\
\sin\theta & \cos\theta & 0\\
0 & 0 & 1
\end{array}\right).
\]
Then $C_{r}\circ\tau_v 
= T_{C_r(v)}
\circ
\hat R_{(v\cdot\mathsf e_1)/r}
\circ C_{r}$,
for every positive real number $r$. 
\end{itemize}
In particular, property (ii) justifies
the choice of the representation
used for the elements in ${\rm Cyl}$
and it is useful for
the proof of Lemma \ref{iso_piecewise}.
}
\end{remark}
Given a piecewise constant 
$\overline{A}$
and referring to Lemma \ref{minimizers_x'}
(see also the discussion after its statement),
we set 
\begin{equation}\label{rk}
\mathfrak{r}_k:=\left\{
\begin{aligned}
& \frac{ a_k(1+2\beta)}{1+\beta},\  &&
\mbox{ if}\quad b_k=a_k,\\
& \frac{ a_k}{1+\beta},\ && 
\mbox{ if}\quad b_k=-a_k,\\
& a_k+\frac{ b_k\,\beta}{1+\beta},\  && 
\mbox{ if}\quad |a_k|>|b_k|,\\
& b_k+\frac{ a_k\beta}{1+\beta},\  && 
\mbox{ if}\quad |b_k|>|a_k|,
\end{aligned}
\right.
\quad\mbox{ for every } k=1,\ldots,n.
\end{equation}
Recall that $\{0,\pm\mathfrak r_k\}$
are the eigenvalues 
(principal curvatures)
of the (constant)
curvature tensors 
ranging in $\mathcal{N}_k$.

\begin{thm}\label{general_cond}
Let $\overline{A}$ be of the form \eqref{Abar}. Assume that $\mathfrak{r}_k\neq \mathfrak{r}_j$ for all $1\leq k<j\leq n$ such that $\mathcal{H}^1\big(\partial\omega_k\cap \partial \omega_j\big)>0$. 
Then there exists a pointwise minimizer $y\in \Wiso(\omega)$ of $\mathcal{E}^0$ if and only if the following conditions are satisfied:
\begin{itemize}
\item [$(a)$] $[\gamma_k]$ is a line segment with 
$\gamma_k(0), \gamma_k(1)\in \partial
\omega$,
for every $k=1,\ldots,n-1$;
\item [$(b)$] $\gamma_k\big((0,1)\big)\cap\gamma_j\big((0,1)\big)=\emptyset$ for all $k\neq j=1,\ldots,n-1$;
\item [$(c)$] every \emph{non flat} region $\omega_k$, i.e. $\omega_k$ with 
corresponding $\mathfrak{r}_k\neq 0$, satisfies:
$\partial\omega_k\cap\omega$ consists of connected components which are orthogonal to some eigenvector 
(principal curvature direction)
of the matrices 
of $\mathcal N_k$ corresponding
to $\mathfrak r_k$.
\end{itemize} 
\end{thm}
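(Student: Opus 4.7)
The plan is to reduce the theorem to the gluing Lemma \ref{iso_piecewise}, applied directly for necessity and via induction on $n$ for sufficiency. The crucial preliminary step is to show that a pointwise minimizer is in fact a ``patchwork of cylinders.''

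\emph{Preliminary reduction to cylinders.} Suppose $y \in \Wiso(\omega)$ is a pointwise minimizer, so ${\rm A}_y(x') \in \mathcal{N}_k$ for a.e.\ $x' \in \omega_k$. Applying the rigidity property \eqref{sff_consant} on each connected Lipschitz subdomain $\omega_k$---together with the structure of $\mathcal{N}_k$ from Lemma \ref{minimizers_x'}---forces ${\rm A}_y$ to be constant on $\omega_k$. The argument of Lemma \ref{min_A_const} then identifies $y|_{\omega_k}$ with the restriction of a cylinder $y_k = T_{v_k}\!\circ R_k\!\circ C_{r_k}\!\circ \rho_k \in {\rm Cyl}$, with $1/r_k = |\mathfrak{r}_k|$ (and $r_k = +\infty$ when $\mathfrak{r}_k=0$), and $\rho_k^{\trsp}\mathsf{e}_1$ a unit eigenvector of ${\rm A}_y|_{\omega_k}$ corresponding to $\mathfrak{r}_k$.

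\emph{Necessity.} For each subdivision curve $\gamma_k$ separating adjacent subdomains $\omega_i$, $\omega_j$, apply Lemma \ref{iso_piecewise} to the cylinders $y_i$, $y_j$. The hypothesis $\mathfrak{r}_i \neq \mathfrak{r}_j$ excludes the trivial matching-curvature case described in Remark \ref{iso_piecewise_rmk}, so part (i) of the lemma yields the line-segment structure of $[\gamma_k]$, and part (ii) forces the ruling direction $\rho_i^{\trsp}\mathsf{e}_2$ to be parallel to $[\gamma_k]$; this is precisely condition (c). The endpoint condition in (a) and the non-crossing condition (b) follow by ruling out T- and X-junctions in the interior of $\omega$: at such a junction three or more cylinders would meet at a single point, and the pairwise ruling constraints from Lemma \ref{iso_piecewise}(ii) would force all incident subdivision curves to share a common direction there, forcing two of them to be locally collinear and contradicting the distinctness of the corresponding $\mathfrak{r}$-values.

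\emph{Sufficiency.} Induct on $n$. The case $n=1$ follows from Lemma \ref{min_A_const} together with formula \eqref{cyl}, which exhibits a cylinder with sff equal to any prescribed element of $\mathcal{N}_1$. For $n \geq 2$, peel off $\omega_1$: conditions (a) and (b) make $[\gamma_1]$ a chord of $\omega$ meeting no other $\gamma_j$ in its interior, so $\omega_2' = \omega \setminus (\mathrm{cl}(\omega_1) \cup [\gamma_1])$ is a simply-connected Lipschitz domain satisfying \eqref{dom_cond}, and $\{\omega_2,\ldots,\omega_n\}$ forms a Lipschitz $(n-1)$-subdivision of $\omega_2'$ still satisfying the hypotheses. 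By inductive hypothesis, a pointwise minimizer $\tilde y \in \Wiso(\omega_2')$ exists. Along the chord $[\gamma_1]$, condition (c) applied to $\omega_1$ and to each piece of $\omega_2'$ adjacent to $\omega_1$ forces every adjacent cylinder to have its ruling direction parallel to $[\gamma_1]$; hence the trace $\tilde y|_{[\gamma_1]}$ is a straight line segment in $\mathbb{R}^3$ lying in a constant tangent plane. This datum can be uniquely extended to $\omega_1$ by a cylinder $y_1$ of curvature $|\mathfrak{r}_1|$ and the appropriate curling sign, and Lemma \ref{iso_piecewise} (applied piece-by-piece along each sub-segment of $[\gamma_1]$ where $\omega_1$ meets an individual piece of $\omega_2'$, solving the square system \eqref{caso_generale} in $(R_1,v_1)$) certifies that $y := y_1\chi_{\omega_1} + \tilde y\chi_{\omega_2'}$ belongs to $\Wiso(\omega)$ and is a pointwise minimizer.

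\emph{Main obstacle.} Two points require care. First, the reduction to cylinders rests on the rigidity statement \eqref{sff_consant}, which itself depends on the fine structure results of Hornung and Pakzad for $\WW^{2,2}$-isometric immersions (\cite{Hor111,Hor112,Pak04}). Second, in the inductive step $\omega_1$ may border several pieces of $\omega_2'$ along $[\gamma_1]$ simultaneously, so the single cylinder $y_1$ must be compatible with each of them at once. The hypothesis (c) is exactly what guarantees this: it forces all adjacent cylinders to have a common ruling direction parallel to $[\gamma_1]$, so their traces on $[\gamma_1]$ concatenate into one straight-line segment with one tangent plane, which is precisely the degenerate boundary datum needed to accommodate a single cylinder on $\omega_1$.
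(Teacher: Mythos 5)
Your proposal takes the same overall route as the paper's proof: reduce to the case of cylinders via Lemma~\ref{min_A_const} and Lemma~\ref{minimizers_x'}, then apply the gluing Lemma~\ref{iso_piecewise} together with Remark~\ref{iso_piecewise_rmk} to extract the conditions on the $[\gamma_k]$ and on the rulings. Where you go further than the paper is in fleshing out the pieces that the paper leaves terse. The paper proves necessity only for $n=2$ and then asserts that the general case ``can be achieved by an induction argument as a consequence of our definition of Lipschitz subdivision,'' and dismisses sufficiency with ``straightforward computations as in the proof of Lemma~\ref{iso_piecewise}.'' You supply a concrete peel-off induction for sufficiency (removing $\omega_1$, applying the inductive hypothesis on $\omega_2'$, and re-attaching a single cylinder along the chord $[\gamma_1]$, using the common ruling direction to show the trace and tangent plane data concatenate into a single straight-line boundary datum), which is a genuine elaboration of the paper's one-liner and correctly addresses the non-trivial point that $\omega_1$ may border several pieces of $\omega_2'$ along the same chord.

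Two points deserve care. First, in the necessity argument you apply Lemma~\ref{iso_piecewise} to pairs $\omega_i,\omega_j$; strictly, $\gamma_k$ separates $\omega_k$ from the entire $\omega_{k+1}'$, not from a single cylinder, so the lemma must be invoked locally on small neighbourhoods straddling each sub-segment of $[\gamma_k]$ between $\omega_k$ and an individual $\omega_j$. This gives piecewise-linearity of $[\gamma_k]$; promoting it to linearity and obtaining condition~$(a)$ is exactly the T-/X-junction exclusion. Your junction argument has the right mechanism (the pairwise ruling constraints collapse all incident subdivision segments to a common direction), but the stated contradiction, ``contradicting the distinctness of the corresponding $\mathfrak{r}$-values,'' is not the one actually produced: what goes wrong is geometric. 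The collinearity forces at least one of the wedge angles at the junction to be $0$ or $\pi$, which means either an empty subdomain or a would-be internal subdivision segment heading into $\omega_k$ across $[\gamma_k]$, contradicting the topology of a Lipschitz subdivision. (It is true that in the special case where the two segments degenerate onto each other with only two distinct regions left, one also violates the $\mathfrak{r}_k\neq\mathfrak{r}_j$ hypothesis, but this is not the generic failure mode.) Worth noting in passing: condition~$(b)$ is in fact automatic from Definition~\ref{subdivision}, since $\gamma_k((0,1))\subseteq\omega_k'\subseteq\omega_{j+1}'$ for $k>j$, while $\gamma_j((0,1))\subseteq\partial\omega_{j+1}'$; the theorem lists it only for readability, and your junction argument is therefore only needed to establish the endpoint clause of~$(a)$.
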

\begin{proof}
 
The sufficiency part of the statement follows by straightforward computations, as in the proof of Lemma \ref{iso_piecewise}. In order to prove necessity, we focus on the case  $n=2$, when $\omega$ is subdivided into two Lipschitz subdomains $\omega_1$ and $\omega_2$ by a curve $\gamma:=\gamma_1$ as in Definition \ref{subdivision}, since the general case can be achieved by an induction argument as a consequence of our definition of Lipschitz subdivision of the domain $\omega$.

Let $y\in \Wiso(\omega)$ be a pointwise minimizer of $\mathcal E^0$. Note that on both subdomains $\omega_1$ and $\omega_2$ the target curvature tensor $\overline A$ is constant. Then by the definition of pointwise minimizers, by Lemma \ref{min_A_const} and Lemma \ref{minimizers_x'} we deduce that $y=y_1\chi_{\omega_1}+y_2\chi_{\omega_2}$, with
$
y_k=T_{v_k}\circ R_k\circ C_{1/|\mathfrak{r}_k|}\circ \rho_k\in {\rm Cyl}$, $k=1,2,
$
where $\mathfrak{r}_k$ is given by \eqref{rk} and $\rho_k$ is such that ${\rm A}_{y_k}\equiv(\det \rho_k)\,\rho_k^{\trsp}\,{\rm diag}\big(|\mathfrak{r}_k|,0\big)\rho_k\in \mathcal{N}_k$. Since $\mathfrak{r}_1\neq \mathfrak{r}_2$, by Lemma \ref{iso_piecewise} and Remark \ref{iso_piecewise_rmk} we obtain that $[\gamma]$ must be a line segment and that $\rho_k^{\trsp}\mathsf e_2$ must be parallel to $[\gamma]$ (or equivalently that the eigenvector $\rho_k^{\trsp}{\sf e}_1$ of ${\rm A}_{y_k}$ is orthogonal to $[\gamma]$) whenever $\mathfrak{r}_k\neq 0$, $k=1,2$, which is precisely the statement of $(a)$ and $(c)$ in the case in which $n=2$.
 
\end{proof}
\begin{remark}
{\upshape
Let $k$ and $j$ be such that $\mathcal H(\partial\omega_k\cap\partial\omega_j)>0$. Observe that when $\mathfrak{r}_k=\mathfrak r_j$ (this may happen, though $\overline A_k\neq \overline A_j$), 
this condition does not impose 
that  $\partial\omega_k\cap \partial\omega_j$  is a line segment. Indeed, when $\mathfrak r_k=\mathfrak r_j$, 
a pointwise minimizer $y$, 
when restricted to $\omega_k$ and 
$\omega_j$, will be given  by some cylinders $y_k$ and $y_j$ with $r_k=1/|\mathfrak r_k|$ and $r_j=1/|\mathfrak r_j|$, respectively, which have the same curvatures $\det \rho_k |\mathfrak{r}_k|=\mathfrak r_k=\mathfrak r_j=\det \rho_j |\mathfrak{r}_j|$.
This fact, as observed in Remark \ref{iso_piecewise_rmk},  does not impose any 
further conditions on
$\partial\omega_k\cap \partial\omega_j$.
}
\end{remark}
 Note that,  if the target curvature does not induce
any flat region, 
the presence of a pointwise minimizer
forces the subdivision lines $[\gamma_k]$
to be all parallel
(see Figure \ref{parallel_min}, (A) and (B)).
When instead a flat region is present
in the subdivision, 
this can give rise to a pointwise minimizer,
even if the $[\gamma_k]$ 
are not mutually parallel 
(see Figure \ref{parallel_min}, (C) and (D)).
Finally, observe that in this
case a subdomain of type $(iii)$
and $(iv)$ can coexist 
(tough they cannot be neighbors).
\begin{figure}[H]
  \begin{subfigure}[b]{0.43\textwidth}
    \includegraphics[width=0.9\textwidth]{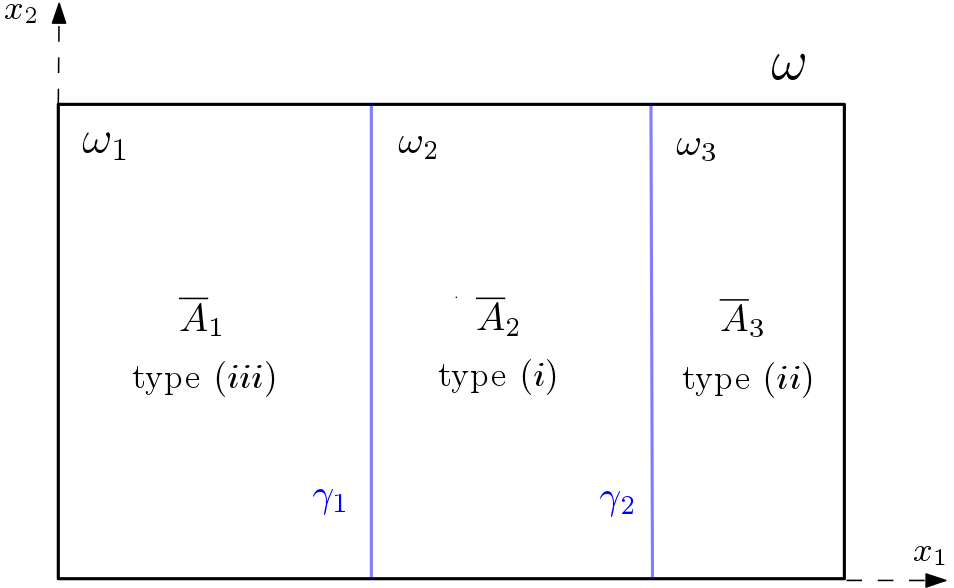}
    \caption{}
    \label{fig:f12}
  \end{subfigure}
  \begin{subfigure}[b]{0.43\textwidth}
    \includegraphics[width=0.9\textwidth]{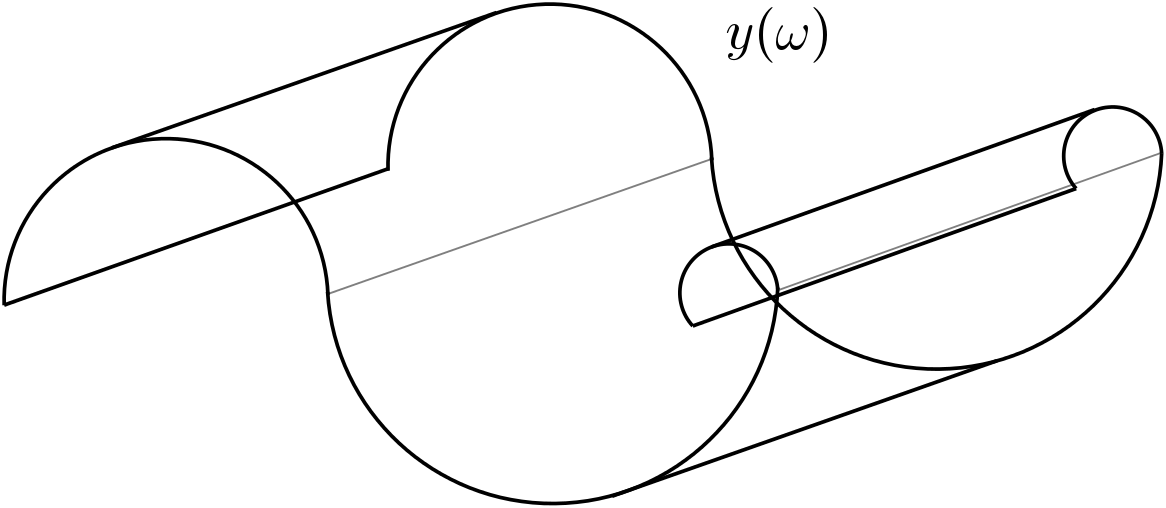}
    \caption{}
    \label{fig:f22}
  \end{subfigure}
 
 \bigskip
 
    \begin{subfigure}[b]{0.43\textwidth}
    \includegraphics[width=0.9\textwidth]{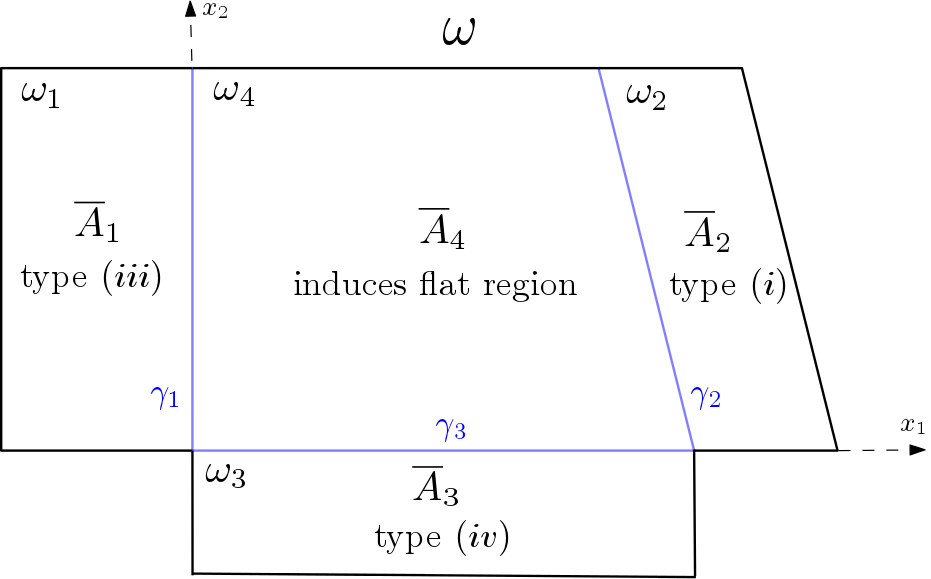}
    \caption{}
    \label{fig:f1}
  \end{subfigure}
  \begin{subfigure}[b]{0.43\textwidth}
    \includegraphics[width=0.9\textwidth]{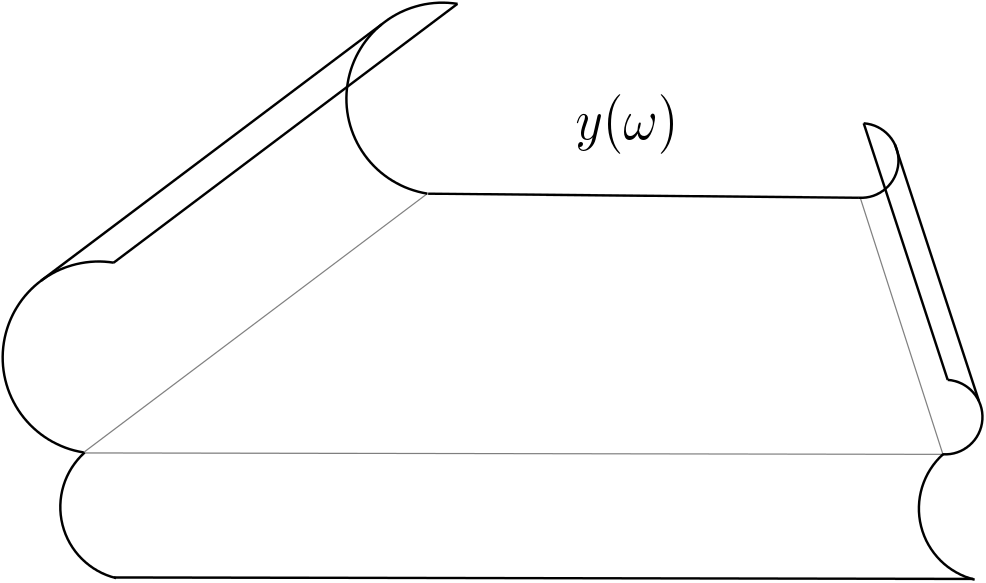}
    \caption{}
    \label{fig:f2}
  \end{subfigure}
  \caption{\label{parallel_min}Examples of reference 
domains with
given target curvature
$\overline A=\sum_{k=1}^3
\overline A_k\chi_{\omega_k}$, which guarantees the existence of a pointwise minimizer $y$ in the case when there are no flat regions induced
(figure (A)) and in the case when a flat regions are present (figure (C)).
Corresponding examples of $y(\omega)$
are illustrated in pictures (B) and (D), respectively.
}
\end{figure}
Point $(c)$ above implies that
for every $k$ and $j$ such that $\omega_k$
and $\omega_j$ are neighbor 
(i.e. share a piece of boundary,
in symbols 
$\mathcal{H}^1(\partial\omega_k\cap\partial\omega_j)>0$) 
it cannot be that 
$A_k$ is of type $(iii)$ 
(see Lemma \ref{minimizers_x'})
and 
$A_j$ is of type $(iv)$
at the same time.
This is because, if not so, 
from point $(c)$ above it would
follow that the line segment 
$[\gamma]=\partial\omega_k\cap\partial\omega_j$
is simultaneously parallel to $\mathsf e_2$
and to $\mathsf e_1$, which is absurd.
Hence, a reference domain endowed with target curvature 
as in Figure \ref{no_point_min} does not
admit a pointwise minimizer.
\begin{figure}[H]
    \includegraphics[width=0.3\textwidth]{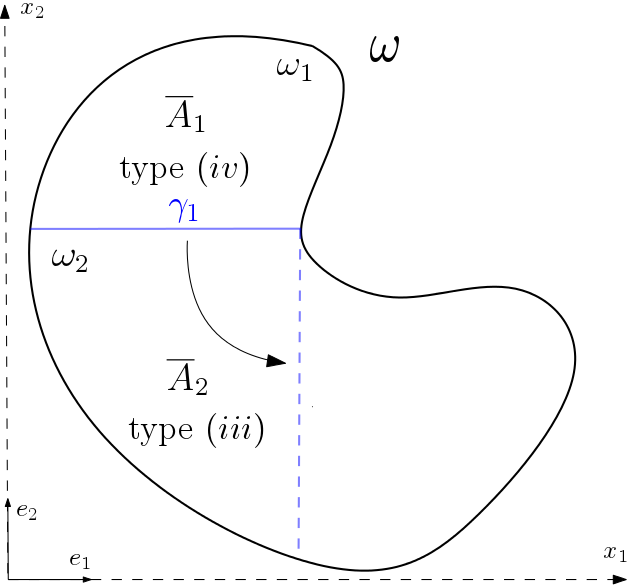}
  \caption{ An example of reference 
domain with
given target curvature
$\overline A=
\overline A_1\chi_{\omega_1}+\overline A_2\chi_{\omega_2}$ which does not allow for a pointwise minimizer 
$y$. This is because $\overline A_1$ of type $(iv)$ forces $[\gamma_1]$ to be parallel to $\mathsf e_1$, while $\overline A_2$ of type $(iii)$ forces $[\gamma_1]$ to be parallel to $\mathsf e_2$.
\label{no_point_min}}
\end{figure}
\section[Application to thin gel sheets]
{Application to thin gel sheets}
\label{gel_sheets}
In this section, we apply the reduced model derived in Section \ref{en_density} to the study of thin sheets of polymer gel. In the present context, a polymer gel is a network of cross-linked polymer chains swollen with a liquid solvent. Denote by $v$ the volume per solvent molecule, by $\overline{N}$ the density of polymer chains in the reference volume and define $\mathbb{R}^{3\times 3}_1:=\{F\in\mathbb{R}^{3\times 3}:{\rm det}F\geq 1\}$. The dimensionless free-energy density for isotropic 
 
and homogeneous
 
polymer gels is of Flory-Rehner type (see \cite{Doi09}) and is given by the 
function $ W :\mathbb{R}^{3\times 3}_1\to \mathbb{R}$ defined as
\begin{equation}\label{flory_rehner_homogeneous}
 W (F):=\frac{v\overline{N}}{2}\big(|F|^2-3\big)+W^{\chi}_{vol}({\rm det}F)+\delta({\rm det}F-1),\qquad \mbox{ for every } F\in\mathbb{R}^{3\times 3}_1.
\end{equation}
Here $\chi\in (0,1/2]$ and $\delta\geq 0$ are fixed dimensionless constants depending on the physical and chemical properties of the material and on environmental conditions, respectively. The function $W_{vol}^{\chi}:[1,+\infty)\to (-\infty,0]$ is of class $C^{\infty}$ on $(1,+\infty)$ and  (right-) continuous at $1$,  with  
$$ W_{vol}^{\chi}(1)=0,\quad \frac{d}{dt}W_{vol}^{\chi}(t)<0\,\mbox{ for every }t\in (1,+\infty)\quad\mbox{ and }\inf_{t\in [1,+\infty)}W^{\chi}_{vol}(t)=\chi-1.$$

Our attention is in particular 
focussed on a \emph{heterogeneous} thin gel sheet occupying the reference configuration $\Omega_h$. More precisely, we suppose that the sheet
is characterized by a $z$-dependent cross-linking density, which in turn determines a $z$-dependent
density $\overline N^h$ of polymer chains. 
At the same time, we suppose that $\overline N^h$
is a perturbation of a constant value $\overline N$,
namely
\begin{equation}\label{poly_chains}
\overline N^h(z):=\overline{N}+hg\left(z',\frac{z_3}{h}\right),\qquad\mbox{ for a.e.\ }z\in \Omega_h\,\mbox{ and every }\, 0<h\ll 1,
\end{equation}
where $g\in\LL^{\infty}(\Omega)$ and  
\begin{equation}
\label{media_0}
\fint_{-h/2}^{h/2}\overline N^h(z',z_3)\,\rmd z_3=\overline N,\qquad\mbox{ for a.e.\ }z'\in \omega.
\end{equation}
Observe that the condition \eqref{media_0} is equivalent to $\int_{-\nicefrac{1}{2}}^{\nicefrac{1}{2}}g(x',t)\,\rmd t=0$ for a.e.\ $x'\in \omega$.
Using the model energy density
\eqref{flory_rehner_homogeneous},
we can describe this heterogeneous system via 
the family of densities
\begin{equation}\label{flory_rehner}
\begin{aligned}
\overline W^h(z,F):=\frac{v}{2}\big(\overline{N}+hg(z',z_3/h)\big)\big(|F|^2-3\big)+W^{\chi}_{vol}({\rm det}F)+\delta({\rm det}F-1)
\end{aligned}
\end{equation}
for a.e.\ $z\in \Omega_h$, every $F\in \mathbb{R}^{3\times 3}_1$ and every $h>0$.
Letting $\{W^h\}$ be the associated family of rescaled densities $W^h:\Omega\times \RRR\to \R\cup\{+\infty\}$ defined by 
\begin{equation}
\label{rescaled_density_gel}
W^h(x,F):=\overline W^h\big((x',hx_3),F\big),\qquad\mbox{ for a.e.\ } x\in \Omega\mbox{ and every } F\in \RRR_1
\end{equation}
and declaring it to be equal $+\infty$ on $\RRR\setminus\RRR_1$, one can show 
(see the details in \cite{ADLL})
that there exist constants $\alpha>1$ and $\Theta\in \mathbb{R}\setminus \{0\}$,
depending on 
$v$, $\overline N$, $\chi$ and $\delta$,
such that for a.e.\ $x\in \Omega$ it holds
\begin{equation}\label{flory_rehner_minimum}
W^h(x,F)\,=\,\min_{\RRR}W^h(x,\cdot)\quad \mbox{ if and only if }\quad F\in \big(\alpha+hb(x)\big)\SO(3)\,\mbox{ with }\, b:=\Theta g.
\end{equation}
Moreover, one can show that $\{W^h\}$ is a family of frame indifferent functions that uniformly converges (in the sense of $(iii)$ in Definition \ref{adm_en_dens}) to $W$ and have quadratic growth. The hypothesis \eqref{media_0} ensures that the spontaneous strain $B=b\mathbb I_3$ in this case satisfies the assumption \eqref{comp_Dmin}. 

By a suitable change of variable in order to switch from the energy wells that are $h$-close to $\alpha\mathbb I_3$ to those that are $h$-close to $\mathbb I_3$ and than using the theory developed in Section \ref{en_density}, we obtain the corresponding 2D Kirchhoff model, which is in this case
given by the energy functional
\begin{equation}
 \mathcal E ^0(y):=\frac{1}{24}\int_{\omega}Q_2\big({\rm A}_y(x')-\overline A(x')\big)\,\rmd x'+\frac{1}{2}\int_{\Omega}Q_2\big(b(x)\mathbb I_2\big)\,\rmd x-\frac{1}{24}\int_{\omega}Q_2\big(\overline A(x')\big)\,\rmd x',
\end{equation}
for every  $y\in \WW^{2,2}(\omega, \R^3)$ satisfying $(\nabla y)^{\trsp}\nabla y=\alpha^2\mathbb I_2$ a.e.\ in $\omega$ (that we will briefly call an $\alpha$-isometry), and $+\infty$ otherwise in $\WW^{1,2}(\Omega, \R^3)$. The relation with the initial 3D model can be seen trough the target curvature tensor $\overline A$, given by
\[
\overline A=12\int_{-1/2}^{1/2}x_3\, b(\cdot,x_3)\,\rmd x_3\,\mathbb I_2,\quad\mbox{ a.e.\ in }\omega,\qquad b=\Theta g,
\] 
and through the quadratic form $Q_2$ defined by 
\[
Q_2(F):=\min_{d\in \R^3}D^2W(\alpha\mathbb I_3)[\hat F+ d\otimes {\sf f}_3]^2,\qquad\mbox{ for every }F\in \R^{2\times 2},
\] 
and explicitly reads as
\begin{equation}\label{gelQ2}
Q_2(F)=2G|F_{\rm sym}|^2+\Lambda(\alpha)\,\tr^2 F, \qquad\mbox{ for every }F\in \R^{2\times 2},
\end{equation}
where $G$ and $\Lambda(\alpha)$ are positive constants depending only on the (fixed) physical properties of the material.
 
\begin{remark}\label{gel_motivation}
{\rm 
We remark that the (rescaled) energy densities $ W^h(x,\cdot) $ defined by \eqref{flory_rehner} are minimized 
on
\[ \big(\alpha+hb(x)\big) \SO(3),\qquad \mbox{
for every } h>0 \mbox{ and a.e.\ } x\in \Omega,\]
for some $\alpha>1$.
Moreover, they uniformly converge to $ W $ given by \eqref{flory_rehner_homogeneous}, which is minimized at $\alpha\,\SO(3)$. However, 
by directly confronting
formulas \eqref{flory_rehner_homogeneous} and \eqref{flory_rehner},
one can check that the densities $ W^h $
cannot be rewritten in the ``prestretch''
form 
\[
 W^h(x,F)=
W\bigg(\bigg(1+h\frac{b(x)}{\alpha}\bigg)^{-1}F\bigg)
 .
\]
}
\end{remark}
\begin{example}\label{example_2}
{\rm 
Consider a thin film made of polymeric gel occupying the domain $\Omega_h$ where $\omega=(-d,d)\times (0,\ell)$ and with associated family of energy densities $ \{\overline W^h\} $ given by \eqref{flory_rehner}. Suppose that the variation of the number of polymeric chains $\overline N^h$ given by \eqref{poly_chains} is such that the associated  function  $g$  is of the form
 
\[
g(x',x_3):=\left\{\begin{aligned} g_1(x_3),\ & \mbox{ if } x'\in (-d,0]\times(0,\ell)\\
g_2(x_3),\ & \mbox{ if } x'\in (0,d)\times (0,\ell),
\end{aligned}\right.
\]
 
with $g_1,g_2\in \LL^{\infty}(-1/2,1/2)$, satisfying 
$\int_{\nicefrac{-1}{2}}^{\nicefrac{1}{2}}g_1(t)\,\rmd t=\int_{\nicefrac{-1}{2}}^{\nicefrac{1}{2}}g_2(t)\,\rmd t=0$, and
\[a_1:=12\int_{\nicefrac{-1}{2}}^{\nicefrac{1}{2}}x_3\,\Theta g_1(x_3)\,\rmd x_3\neq 12\int_{\nicefrac{-1}{2}}^{\nicefrac{1}{2}}x_3\,\Theta g_2(x_3)\,\rmd x_3=:a_2\] with $a_1$, $a_2$ non zero .
 
In turn, the limiting 2D model is characterized by the target curvature tensor $\overline A$ that equals $a(x')\mathbb{I}_2$ at each $x'\in \omega$, where
$a(x')=a_1$ if $x'\in (-d,0]\times(0,\ell)$ and $a(x')=a_2$ if $x'\in (0,d)\times(0,\ell)$.
 
Finally, using the results of Lemma \ref{minimizers_x'} and  Theorem \ref{general_cond} , we can determine the minimizers of the limiting energy  $\mathcal{E}^0$ . Note that we are in the case of Lipschitz $2$-subdivision of $\omega$ into subdomains $\omega_1:=(-d,0)\times(0,\ell)$ and $\omega_2:=(0,d)\times(0,\ell)$. Given that the subdivision curve is $[\gamma_1]=\partial\omega_1\cap\partial\omega_2=[0,\ell]$ (a line segment parallel to $\mathsf{e}_2$) and $\overline A$ is of type $(i)$ (see Lemma \ref{minimizers_x'}), a pointwise minimizer of  $\mathcal{E}^0$  exists and is any (up to rotations and translations in $\mathbb{R}^3$)  $\alpha$-isometry $y:=y_1\chi_{\omega_1}+y_2\chi_{\omega_2}$, with $y_1,y_2$ given  for every $(x_1,x_2)\in \omega$  by
\begin{equation}\label{minimum_energy_configurations}
\begin{aligned}
&y_1(x_1,x_2):=\alpha\,\Big(\alpha r_1 \Big(\cos\big(x_1/(\alpha r_1)\big)-1\Big),\sigma_1 \alpha r_1 \sin\big(x_1/(\alpha r_1)\big), \sigma_2 x_2\Big)^{\trsp},\\
&y_2(x_1,x_2):=\alpha\,\Big(\sigma_0 \alpha r_2 \Big(\cos\big(x_1/(\alpha r_2)\big)-1\Big),\sigma_1 \alpha r_2 \sin\big(x_1/(\alpha r_2)\big), \sigma_2 x_2\Big)^{\trsp},
\end{aligned}
\end{equation}
with 
\[r_k:=\frac{1}{|\mathfrak{r}_k|}\quad\mbox{ and }\quad \mathfrak{r}_k=a_k\frac{2G+2\Lambda(\alpha)}{2G+\Lambda(\alpha)}, \quad k=1,2,\] (according to Lemma \ref{minimizers_x'} and \eqref{gelQ2})
  and appropriate choice (depending on the sign of $\mathfrak{r}_k$, $k=1,2$) of $\sigma_i\in \{-1,1\}, i=0,1,2$.

Since the pull-back of the second fundamental form associated with $y_1(\omega_1)$ and $y_2(\omega_2)$ respectively, is given by
 $${\rm A}_{y_1}=\sigma_1\sigma_2\left(\begin{matrix}
|\mathfrak{r}_1| & 0\\
0 & 0
\end{matrix}\right)\quad\mbox{ and }\quad {\rm A}_{y_2}=\sigma_0\sigma_1\sigma_2\left(\begin{matrix}
|\mathfrak{r}_2| & 0\\
0 & 0
\end{matrix}\right),$$
 
it is clear that there exists two different (up to rotations and translation in $\mathbb{R}^3$) minimizing surfaces $y(\omega)$. The choice of $\sigma_1\in \{-1,1\}$ determines one of the two possible options for $y_1$, represented by a dashed or a full black line in Figure \ref{fig:f13} below. For any chosen value of $\sigma_1$, the values of $\sigma_2$ and $\sigma_0$ are immediately determined by the sign of $\mathfrak{r}_1$ and $\mathfrak{r}_2$, respectively.

\begin{figure}[H]
	\centering
    \includegraphics[width=0.6\textwidth]{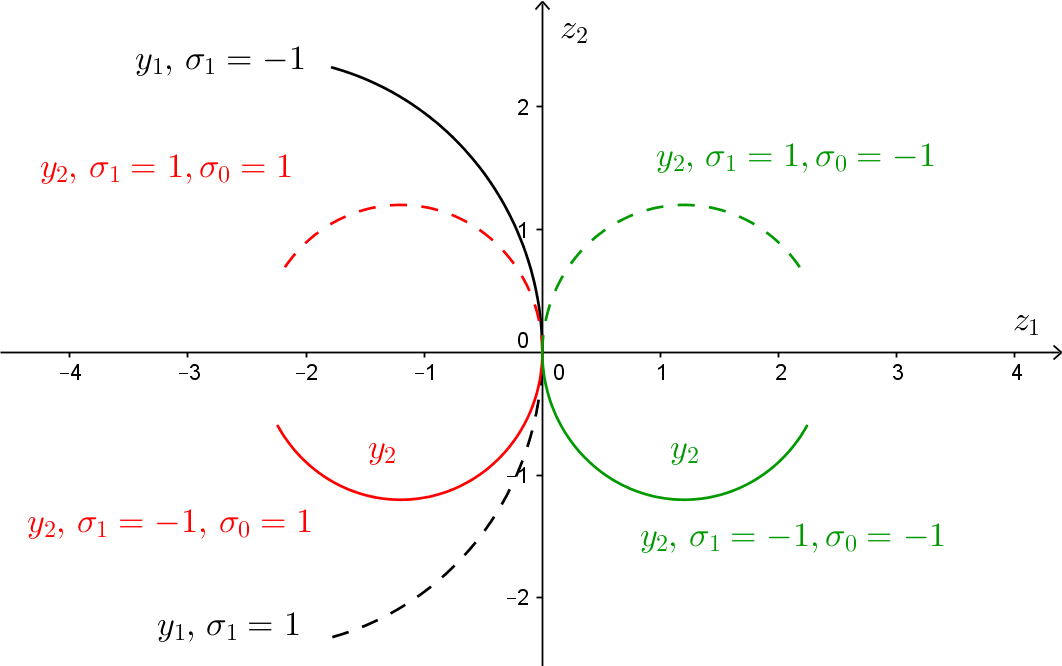}
\caption{Intersection with $(z_1,z_2)$-plane in $\mathbb{R}^3$ of two possible (up to roto-translations) minimizing surfaces $y(\omega)$. One corresponds to a full line (by choosing $\sigma_1=-1$) and the other one to a dashed line (by choosing $\sigma_1=1$). For both choices of $\sigma_1$, the value of the target curvature $\mathfrak{r}_2$ uniquely determines the value of $\sigma_0$ and thus ``decides" whether (both) intersections are black-red (if $\sigma_0=1$) or black-green (if $\sigma_0=-1$) lines.}
\label{fig:f13}
\end{figure}  
}
\end{example}

\section{Conclusions}

In this paper, we have considered a
family of 3D energy functionals that is relevant from the viewpoint of applications to shape morphing materials, especially in the context of swelling gels. 
As remarked in the Introduction, the starting 3D model
\eqref{Bh}--\eqref{gr_cond_wh}
(which reduces to
\eqref{flory_rehner}--\eqref{flory_rehner_minimum}
in the specific gel case), which may be employed to accurately describe
the swelling of polymer gels, is characterized by spontaneous stretches
but not in general representable in the ``pre-stretch form''. Another peculiarity of such a family of 3D energies is that the spontaneous stretches are naturally related to the elastic parameters of the material. Hence, heterogeneities in the stiffness can be exploited to program the target shape of the system.

Having in mind applications to free-swelling, thin gel sheets with heterogeneous stiffness, we have derived by dimension reduction from the aforementioned 3D model a Kirchhoff plate theory (Sections \ref{en_density}--\ref{sec_min}). This plate model, whose governing equations are \eqref{energia_limite}--\eqref{adt_limit_energy}, is then specialized to thin gel sheets in Section \ref{gel_sheets}. A central result of the theory is the expression of the spontaneous curvature as a function of parameters that can be traced back to the three-dimensional stiffness field.
The derivation of the
limiting model
is restricted to the case where the compatibility condition
\eqref{b_check} is fulfilled by the spontaneous strain.
As explained in the introduction,
this fact allows us to 
perform a rigorous
dimension reduction
with standard arguments.
Even though	it is possible to realize experimentally simple systems that fulfill
such condition,
this paper raises and
leaves open a mathematically relevant problem, that is, finding the general limiting Kirchhoff
model without the restriction 
\eqref{b_check}, whose complete solution would potentially
give new insight into the dimension reduction from 3D elasticity to plate theory.

We have then
investigated the pointwise minimizers of
the 2D model,
restricting the attention to the case 
where the target curvature is piecewise constant
(see Figure \ref{fig: folding}, Figure \ref{parallel_min} (B) 
and (D), for some sketches
of the configurations
which occur in this case).
The interest in this special class of minimizers is twofold. On the one hand, such a class corresponds to some of the simplest structures that can be realized experimentally, which yet can find interesting engineering applications (i.e. foldable structures, see the forthcoming \cite{ADLL}). On the other hand,
it opens the way to the study of a huge class of 
open minimum problems 
(that is the minimization of
the functional 
\eqref{energia_limite}--\eqref{def:target_curvature}
in $\Wiso(\omega)$, given
an arbitrary bounded
$\overline A:
\omega\to\Sym(2)$),
for which ready-made analytical
tools are not
yet available.

\subsection*{Acknowledgements}
This work has been
funded by the European Research Council through 
the ERC Advanced Grant 340685-MicroMotility.
We thank A. DeSimone for helpful discussions.  We would also like to thank the anonymous 
referees for their careful reading of our paper, 
and for several constructive remarks.


\bibliographystyle{siam}


\end{document}